\newtheorem{theorem}{Theorem}[section]
\newtheorem{lemma}[theorem]{Lemma}
\newtheorem{prop}[theorem]{Proposition}
\newtheorem{cor}[theorem]{Corollary}
\newtheorem{question}[theorem]{Question}
\theoremstyle{definition}
\newtheorem{definition}[theorem]{Definition}
\newtheorem{remark}[theorem]{Remark}
\newtheorem{example}[theorem]{Example}
\def \dom{\operatorname{dom}}
\def\dotminussym#1#2{%
  \setbox0=\hbox{$\m@th#1-$}%
  \kern.5\wd0%
  \hbox to 0pt{\hss\hbox{$\m@th#1-$}\hss}%
  \raise.6\ht0\hbox to 0pt{\hss$\m@th#1.$\hss}%
  \kern.5\wd0}
\mathchardef\mhyphen="2D
\newcommand{\RCA}{\ensuremath{\mathbf{RCA_0}}}
\newcommand{\WKL}{\ensuremath{\mathbf{WKL_0}}}
\newcommand{\RKL}{\ensuremath{\mathbf{RWKL}}}
\newcommand{\DNR}{\ensuremath{\mathbf{DNR}}}
\newcommand{\ADS}{\ensuremath{\mathbf{ADS}}}
\newcommand{\CAC}{\ensuremath{\mathbf{CAC}}}
\newcommand{\WWKL}{\ensuremath{\mathbf{WWKL}}}
\newcommand{\WRKL}{\ensuremath{\mathbf{RWWKL}}}
\newcommand{\EM}{\ensuremath{\mathbf{EM}}}
\newcommand{\SEM}{\ensuremath{\mathbf{SEM}}}
\newcommand{\SRT}{\ensuremath{\mathbf{SRT}^2_2}}
\newcommand{\K}{\mathcal{K}}
\newcommand{\D}{\mathcal{D}}
\newcommand{\R}{\mathcal{R}}
\newcommand{\W}{\mathcal{W}}
\newcommand{\z}{z}
\newcommand{\dDNR}{d\mhyphen \mathrm{DNR}}
\newcommand{\upto}{\upharpoonright}
\newcommand{\comp}{\vcenter{\hbox{$\;\!\scriptstyle\circ\;\!$}}}
\begin{document}

\title{Separating Principles Below \WKL}
\author{Stephen Flood and Henry Towsner}
\date{\today}
\address {Department of Mathematics, University of Connecticut, Waterbury Campus, 99 East Main Street
Waterbury, CT 06702, USA}
\email{stephen.flood@uconn.edu}
\urladdr{\url{http://www.math.uconn.edu/~flood/}}
\thanks{Partially supported by NSF grant DMS-1340666.}
\address {Department of Mathematics, University of Pennsylvania, 209 South 33rd Street, Philadelphia, PA 19104-6395, USA}
\email{htowsner@math.upenn.edu}
\urladdr{\url{http://www.math.upenn.edu/~htowsner}}

\begin{abstract}
In this paper, we study Ramsey-type Konig's Lemma, written $\RKL$, using a technique introduced by Lerman, Solomon, and the second author.  This technique uses iterated forcing to construct an $\omega$-model satisfying one principle $T_1$ but not another $T_2$.  The technique often allows one to translate a ``one step'' construction (building an instance of $T_2$ along with a collection of solutions to each computable instance of $T_1$) into an $\omega$-model separation (building a computable instance of $T_2$ together with a Turing ideal where $T_1$ holds). 

We illustrate this translation by separating $d\mhyphen\DNR$ from $\DNR$ (reproving a result of Ambos-Spies, Kjos-Hanssen, Lempp, and Slaman), and then apply this technique to separate $\RKL$ from $\DNR$ (which has been shown separately by Bienvenu, Patey, and Shafer).
\end{abstract}

\maketitle

\section{Introduction}

Weak K\"onig's Lemma (\WKL), one of the ``Big 5'' systems of Reverse Mathematics, has developed a reputation for being a ``robust'' system---tweaking the statement tends to either leave its strength unchanged, or end up equivalent to one of a small handful of handful of other systems.  This behavior is in sharp contrast to the much-studied Ramsey's Theorem for Pairs, where each small variant seems to produce some different system.

A few distinct weakenings are known: the \textit{weak weak K\"onig's lemma} ($\mathbf{WWKL}$) is strictly weaker \cite{MR1080236}, and the existence of diagonally non-recursive functions ($\mathbf{DNR}$) is weaker still \cite{MR2135656}.  The first author introduced another principle, $\mathbf{RWKL}$ \cite{RKL}\footnote{This principle was originally named $\mathbf{RKL}$.  Here we follow the notation of \cite{RKL-Variants}.} which is strictly weaker that \WKL, and asked how it compares to $\mathbf{WWKL}$ and $\mathbf{DNR}$.  
In Section \ref{sec:rkl} we show that $\mathbf{RWKL}$ is strictly stronger than $\mathbf{DNR}$.  (This result has been separately shown by Bienvenu, Patey, and Shafer \cite{RKL-Variants} using very different methods; their method gives the stronger separation, that $\mathbf{WWKL}$ does not imply $\mathbf{RWKL}$.)

The method we use is based on a technique introduced by Lerman, Solomon, and the second author \cite{LST} which uses iterated forcing to construct an $\omega$-model satisfying one principle but not a second; we discuss this method in more detail below.  This method has a precursor in the literature---the separation of $\mathbf{WWKL}$ from $\mathbf{DNR}$ by Ambos-Spies, Kjos-Hanssen, Lempp, and Slaman \cite{MR2135656}.  In Section \ref{sec:wwkl} also give another proof of this result, using the same iterated forcing framework.

We also prove two related positive implications.  First, since $\mathbf{RWKL}$ and $\mathbf{WWKL}$ represent distinct weakenings of $\mathbf{WKL}$, it is natural to ask about combining them, into a ``$\mathbf{RWWKL}$''.  We show in Section \ref{sec:positive} principle is equivalent to $\mathbf{DNR}$.  (This was also shown by Bienvenu, Patey, and Shafer \cite{RKL-Variants} using other methods.)

The relationship between Ramsey's Theorem for Pairs and \WKL{} was open until Liu's recent proof \cite{MR2963024} that Ramsey's Theorem for Pairs does not imply \WKL{}  One of the motivations for the study of this problem is that all proofs of Ramsey's Theorem for Pairs appear to make use of \WKL{}, suggesting that Ramsey's Theorem for Pairs \emph{should} imply \WKL.  \RKL{} resolves this mystery: both \WKL{} and Ramsey's Theorem for Pairs imply \RKL{}, and \RKL{} suffices to carry out the proof of Ramsey's Theorem for pairs.  \EM{} is a weakening of Ramsey's Theorem for pairs, and its proof similarly makes use of \RKL{}; in Section \ref{sec:positive} we show that this is indeed necessary: \EM{} also implies \RKL{}.  (This has been independently shown by Bienvenu, Patey, and Shafer \cite{RKL-Variants}.)

\section{Principles and Definitions}

Throughout this paper our base theory is always the theory \RCA{} of Reverse Mathematics as described in \cite{simpson99}.

\begin{definition}
  We write $2^\omega$ for the collection of functions from $\mathbb{N}$ to $\{0,1\}$ and $2^{<\omega}$ for the set of functions from some initial segment $[0,n]$ to $\{0,1\}$.  If $\sigma\in 2^{<\omega}$ we write $|\sigma|$ as an abbreviation for $|\dom(\sigma)|$, the length of $\sigma$.

A \emph{tree} is a set $T\subseteq 2^{<\omega}$ such that for every $\sigma\in T$ and every $n<|\sigma|$, $\sigma\upharpoonright[0,n]\in T$; we write $[T]$ for the set of $\Lambda\in 2^\omega$ such that for every $n$, $\Lambda\upharpoonright n\in T$.  We write $T_n$ for the set of $\sigma\in T$ with $|\sigma|=n$ and $|T|=\max\{|\sigma|\mid \sigma\in T\}$ (and $|T|=\infty$ if $T$ contains infinitely many elements).
\end{definition}
The definition of $|T|$ will not cause confusion since we will never be interested in the cardinality of $T$.

\begin{definition}
  We write $\tau\sqsubseteq\sigma$ if $\tau=\sigma\upharpoonright[0,n]$ for some $n<|\sigma|$.

  We say $T'$ \emph{end-extends} $T$ if $T\subseteq T'$ and whenever $\sigma\in T'\setminus T$ there is a $\tau\sqsubset\sigma$ with $\tau\in T$ and $|\tau|=|T|$.
\end{definition}

\begin{definition}
  $\WKL$ states that whenever $T\subseteq 2^{<\omega}$ is infinite, $[T]$ is non-empty.  $\WWKL$ states that whenever $T\subseteq 2^{<\omega}$ and there is an $\epsilon$ such that for every $n$, $|T_n|\geq \epsilon 2^n$, $[T]$ is non-empty.

$\RKL$ states that whenever $T\subseteq 2^{<\omega}$ is infinite, there exists an infinite set $H$ so that for every $n$, there is a $\sigma\in T_n$ such that $\sigma\upharpoonright H$ is constant.  $\mathbf{RWWKL}$ states that whenever $T\subseteq 2^{<\omega}$ is infinite and there is an $\epsilon$ such that for every $n$, $|T_n|\geq \epsilon 2^n$, there exists an infinite set $H$ so that for every $n$, there is a $\sigma\in T_n$ such that $\sigma\upharpoonright H$ is constant. 
\end{definition}

\begin{definition}
  $\DNR$ states that for every set $X$ there exists a total function $f$ such for every $e$ such that $\phi^X_e(e)\downarrow$, $f(e)\neq \phi^X_e(e)$.

When $d$ is a total function, $d\mhyphen\DNR$ states that for every set $X$ there exists a total function $f$ such for every $e$ such that $\phi^X_e(e)\downarrow$, $f(e)\neq \phi^X_e(e)$, and $f(e)<d(e)$ for all $e$.
\end{definition}

It is well known that $2\mhyphen\DNR$ is equivalent to $\WKL$, and by \cite{MR1034562}, there is a computable $d$ such that $\WWKL$ implies $d\mhyphen\DNR$.

\section{\texorpdfstring{\DNR{}}{DNR} does not imply \texorpdfstring{$d\mhyphen\DNR$}{d-DNR}}\label{sec:wwkl}

In this section we give a proof that $\DNR$ does not imply $\WWKL$.  This was originally shown in \cite{MR2135656}, and our proof has the same underlying structure: we show that, for any computable $d$, $\DNR$ does not imply $d\mhyphen\DNR$.  We show this in an iterative construction, beginning with a well-chosen instance of $d\mhyphen\DNR$ and successively adding sets resolving instances of $\DNR$ in such a way that we never solve our chosen instance of $d\mhyphen\DNR$.  We include this proof here to illustrate the connection between the iterated forcing method of \cite{LST} and the proof in \cite{MR2135656}, and to introduce some of the ideas we use in the next section.

\subsection{Families of Extensions}

We will work with finite approximations to \DNR{} functions:
\begin{definition}
  We write $\omega^{\subset\omega}$ for the set of partial functions from $\omega$ to $\omega$ with finite domain.  We say $f\in\omega^{\subset\omega}$ is \emph{$\mathrm{DNR}^X$} if for every $e\in\dom(f)$ such that $\varphi^X_e(e)\downarrow$, $\varphi^X_e(e)\neq f(e)$.  If $d:\omega\rightarrow\omega$, we say $f$ is \emph{$d\mhyphen\mathrm{DNR}^X$} if $f$ is $\mathrm{DNR}^X$ and $f(x)<d(x)$ for all $x\in\dom(f)$.
  \end{definition}

When we want to extend a $\mathrm{DNR}^X$ partial function $f$ with finite domain, we generally want many extensions of $f$ to choose from.  Specifically, we generally want to fix some $x\not\in\dom(f)$ and have $n$ choices for the value of the extension at $x$.  Furthermore, for each such choice, we should have some $x'\not\in\dom(f)\cup\{x\}$ and $n$ choices for the value of the extension at $x'$, and so on.  We formalize this with the notion of an $n$-branching set of extensions of $f$ of length $k$.  The length is the number of new values which will be added to the partial function's domain.
The length is only specifically referenced in the recursive definition and a few easy lemmas which are shown by induction on the length.  
\begin{definition}
  Let $f\in\omega^{\subset\omega}$.  We define an \emph{$n$-branching set of extensions of $f$ of length $k$} by induction on $k$:
  \begin{itemize}
  \item An $n$-branching set of extensions of length $1$ is a set $U$ of $n$ functions so that for some fixed $x\not\in\dom(f)$, each $g\in U$ satisfies $f\subseteq g$ and $\dom(g)=\dom(f)\cup\{x\}$,
\item If $U_0$ is an $n$-branching set of extensions of $f$ of length $k$ and if $(\forall g\in U_0)[U_g$ is an $n$-branching set of extensions of $g$ of length $1]$, then $\bigcup_{g\in U_0}U_g$ is an $n$-branching set of extensions of $f$ of length $k+1$.
  \end{itemize}
\end{definition}

Our notion of an $n$-branching set of extensions is very similar to the notion of an $n/m$-good tree from \cite{MR2135656}.

\begin{lemma}\label{lemma.branch_gives_DNR}
  If $f$ is $\mathrm{DNR}^X$ and $U$ is a $2$-branching set of extensions of $f$ then there is a $g\in U$ which is $\mathrm{DNR}^X$.
\end{lemma}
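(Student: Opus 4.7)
The plan is to prove this by induction on the length $k$ of the $2$-branching set of extensions, since the definition of such a set is given recursively on $k$.

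For the base case $k=1$, a $2$-branching set of extensions of $f$ of length $1$ consists of two functions $g_0, g_1$, both extending $f$ by assigning some value at a common new point $x \notin \dom(f)$, with $g_0(x) \neq g_1(x)$. Since $f$ is $\mathrm{DNR}^X$, to check whether $g_i$ is $\mathrm{DNR}^X$ we only need to worry about the new point $x$. If $\varphi^X_x(x) \uparrow$, then both $g_0$ and $g_1$ are $\mathrm{DNR}^X$. If $\varphi^X_x(x) \downarrow$, then at most one of the two distinct values $g_0(x), g_1(x)$ can equal $\varphi^X_x(x)$, so at least one $g_i$ is $\mathrm{DNR}^X$. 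Either way, some $g \in U$ is $\mathrm{DNR}^X$.

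For the inductive step, suppose the claim holds for length $k$, and let $U$ be a $2$-branching set of extensions of $f$ of length $k+1$. By the recursive clause in the definition, $U = \bigcup_{g \in U_0} U_g$ where $U_0$ is a $2$-branching set of extensions of $f$ of length $k$ and each $U_g$ is a $2$-branching set of extensions of $g$ of length $1$. Applying the inductive hypothesis to $U_0$ yields some $g \in U_0$ which is $\mathrm{DNR}^X$. Now applying the base case to $U_g$ (an extension set of the $\mathrm{DNR}^X$ function $g$) produces some $h \in U_g \subseteq U$ which is $\mathrm{DNR}^X$, completing the induction.

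There is no real obstacle here: the only substantive observation is the counting argument in the base case, namely that $\mathrm{DNR}^X$ imposes at most one forbidden value at each new point, so two distinct candidate values must include a safe one. The induction on the recursive definition of ``length'' then propagates this pointwise fact up the branching structure.
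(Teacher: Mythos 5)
Your proof is correct, and it is exactly the routine induction the paper has in mind (the lemma is stated there without proof as immediate). The key point — that the $\mathrm{DNR}^X$ condition forbids at most one value at the new point $x$, so two distinct candidates must include a safe one — is precisely the intended observation, and your induction on the length propagates it correctly through the recursive definition.
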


Importantly, $n$-branching sets of extensions satisfy a pigeonhole principle:
\begin{lemma}\label{tree_pigeonhole}
  Suppose $U$ is an $n+m-1$-branching set of extensions of $f$ and let $R\subseteq U$.  Then either there is an $n$-branching set of extensions $U_R\subseteq R$ or there is an $m$-branching set of extensions $U_B\subseteq U\setminus R$.
\end{lemma}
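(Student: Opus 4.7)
The plan is to proceed by induction on the length $k$ of the branching set $U$. The base case $k=1$ reduces to the ordinary pigeonhole principle for natural numbers: by definition, $U$ consists of $n+m-1$ functions all extending $f$ by assigning values at a single common $x\notin\dom(f)$, so at least one of the inclusion-based sets $R$ and $U\setminus R$ must have size at least $n$ or at least $m$ respectively, and any $n$ or $m$ of them form a length-$1$ branching subset.

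For the inductive step, suppose the result holds for length $k$, and write $U=\bigcup_{g\in U_0}U_g$ where $U_0$ is an $(n+m-1)$-branching set of extensions of $f$ of length $k$ and each $U_g$ is an $(n+m-1)$-branching set of extensions of $g$ of length $1$. For each $g\in U_0$, I apply the base case to $U_g$ with the split $R\cap U_g$ versus $U_g\setminus R$, obtaining either an $n$-branching subset $U_g^R\subseteq R\cap U_g$ of length $1$, or an $m$-branching subset $U_g^B\subseteq U_g\setminus R$ of length $1$. Call $g$ ``red'' in the first case and ``blue'' otherwise, and let $V\subseteq U_0$ be the set of red $g$'s.

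Now I apply the inductive hypothesis to $U_0$ with the subset $V$. In one case there is an $n$-branching subset $U_0^R\subseteq V$ of length $k$, and then $\bigcup_{g\in U_0^R}U_g^R$ is an $n$-branching subset of $R$ of length $k+1$ by the recursive clause in the definition. In the other case there is an $m$-branching subset $U_0^B\subseteq U_0\setminus V$ of length $k$, and $\bigcup_{g\in U_0^B}U_g^B$ is an $m$-branching subset of $U\setminus R$ of length $k+1$.

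I do not anticipate any real obstacle; this is the standard recursive pigeonhole pattern familiar from Ramsey-type theorems on labeled trees. The only bookkeeping issue is verifying that the subsets extracted at each level genuinely have the structure of $n$- or $m$-branching sets in the sense of the definition, but this is immediate from the construction since we only ever form unions of length-$1$ branching sets indexed over a branching set of shorter length.
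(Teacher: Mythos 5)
Your proof is correct and follows essentially the same route as the paper's: induct on the length, handle length $1$ by ordinary pigeonhole, then in the inductive step color each $g\in U_0$ by which side its length-$1$ fiber $U_g$ contributes to and apply the inductive hypothesis to that coloring of $U_0$. The final verification that the unions satisfy the recursive clause of the definition is exactly the observation the paper relies on as well.
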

\begin{proof}
  By induction on the length of $U$.  For convenience, let $B=U\setminus R$.  If $U$ has length $1$, this is immediate---either $|R|\geq n$, in which case any $n$ element subset of $R$ suffices, or $|R|<n$, so $|B|\geq m$, and any $m$ element subset of $B$ suffices.

  Suppose $U$ has length $k+1$.  Then there is some $U_0$ which is an $n+m-1$-branching set of extensions of $f$ of length $k$ and for each $g\in U_0$ there is some $U_g$ which is an $n+m-1$-branching set of extensions of $g$ of length $1$ such that $U=\bigcup_{g\in U_0}U_g$.  By the previous paragraph, for each $g\in U_0$ there is some $U_g'$ where either $|U'_g|=n$ and $U'_g\subseteq R$ or $|U'_g|=m$ and $U'_g\subseteq B$.  Let $\iota_g=R$ in the former case and $\iota_g=B$ in the latter case.  Define $R_0\subseteq U_0$ to be those $g\in U_0$ such that $\iota_g=R$.  By the inductive hypothesis, either we have an $n$-branching $U_R\subseteq R_0$, and so $\bigcup_{g\in U_R}U'_g$ suffices, or an $m$-branching $U_B\subseteq (U_0\setminus R_0)$, in which case $\bigcup_{g\in U_B}U'_g$ suffices.
\end{proof}

We also need the corresponding iterated version:
\begin{lemma}\label{lemma.dDNR-iteratedpigeon}
Suppose $U$ is a $k(n-1)+1$-branching set of extensions of $f$ and $U=\bigcup_{i<k}U_i$.  Then there is some $U_i$ which contains an $n$-branching set of extensions of $f$.
\end{lemma}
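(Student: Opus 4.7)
The plan is a straightforward induction on $k$, iterating the binary pigeonhole from Lemma~\ref{tree_pigeonhole}. The base case $k=1$ is immediate, since $k(n-1)+1 = n$ and $U = U_0$ already contains (in fact equals) an $n$-branching set of extensions of $f$.

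For the inductive step, suppose the claim holds for $k-1$ and that $U$ is a $k(n-1)+1$-branching set of extensions of $f$, partitioned as $U = \bigcup_{i<k} U_i$. The key algebraic observation is that
\[ k(n-1)+1 \;=\; n + \bigl((k-1)(n-1)+1\bigr) - 1, \]
so I can apply Lemma~\ref{tree_pigeonhole} with parameters $n$ and $m = (k-1)(n-1)+1$ to the partition $U = U_0 \cup (U \setminus U_0)$. One of two things happens: either $U_0$ contains an $n$-branching set of extensions of $f$, in which case we are done, or $U \setminus U_0 = \bigcup_{1 \le i < k} U_i$ contains an $(k-1)(n-1)+1$-branching set of extensions $U'$ of $f$. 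In the latter case, I apply the inductive hypothesis to $U'$ with the partition $U' = \bigcup_{1 \le i < k} (U' \cap U_i)$ into $k-1$ pieces, yielding some $i$ with $1 \le i < k$ for which $U' \cap U_i \subseteq U_i$ contains an $n$-branching set of extensions of $f$.

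The only real point to verify is the arithmetic identity relating the branching numbers, which tells us how to slice off one color at a time while preserving enough branching in the remainder to continue the induction; this is the analogue of the standard ``$k$-color pigeonhole follows from iterating $2$-color pigeonhole'' argument, with the role of cardinalities played here by the branching parameter. I do not anticipate a serious obstacle: the only thing to be mildly careful about is that Lemma~\ref{tree_pigeonhole} is stated in terms of an arbitrary subset $R \subseteq U$, so no extra work is needed to ensure that the $m$-branching witness lives inside $U \setminus U_0$ rather than some coarser superset, and nothing in the inductive step requires tracking the length of the branching set.
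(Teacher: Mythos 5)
Your argument is correct; the paper states Lemma \ref{lemma.dDNR-iteratedpigeon} without proof, and your induction on $k$ --- peeling off one piece at a time via Lemma \ref{tree_pigeonhole} using the identity $k(n-1)+1 = n + \bigl((k-1)(n-1)+1\bigr) - 1$ --- is exactly the intended routine argument. The one cosmetic point is that the pieces $U_i$ need not be disjoint, so you should write $U' \subseteq U\setminus U_0 \subseteq \bigcup_{1\le i<k}U_i$ rather than an equality, but this changes nothing since you then pass to the cover $U'=\bigcup_{1\le i<k}(U'\cap U_i)$ anyway.
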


\subsection{Families of Completions}

\begin{definition}
  If $f\in\omega^{\subset\omega}$, a \emph{family of completions} of $f$ is a set $\Xi\subseteq\omega^{\subset\omega}$ such that:
  \begin{itemize}
  \item If $g\in\Xi$ then $f\subseteq g$,
  \item If $g\in\Xi$ and $f\subseteq h\subseteq g$ then $h\in\Xi$.
  \end{itemize}

We say $\Xi$ \emph{blocks at width $n_\Xi$} if every $n_\Xi$-branching set of extensions of $f$ has non-empty intersection with $\Xi$.
\end{definition}
A family of completions $\Xi$ naturally defines a (possibly empty) collection of total extensions of $f$, namely those total functions $f^\infty$ such that for all finite $g\subset f^\infty$, $g\in\Xi$.  Note that for any given $g\subset f^\infty$, it is likely that $g$ is \emph{not} an initial segment of $f$.

When the family is blocking, we know that this collection is actually non-empty.  Note that there may still be branches in $\Xi$ which are dead ends--- which have no further extensions---but we will see in Lemma \ref{thm:dnr_extension} that blocking families also contain many elements which are not dead ends. 

\begin{definition}
  If $\Xi$ is a family of completions of $f$ and $g\in\Xi$, $\Xi\upharpoonright g$ is the set of $h\in\Xi$ such that $g\subseteq h$.
\end{definition}

\begin{lemma}\label{thm:dnr_extension}
Let $\Xi$ be a family of completions of $f$ which blocks at width $n_\Xi$ and let $U$ be a $2n_\Xi$-branching set of extensions of $f$.  Then there is a $2$-branching set $U^*\subseteq U\cap \Xi$ such that for every $g\in U^*$, $\Xi\upharpoonright g$ blocks at width $n_\Xi$.
\end{lemma}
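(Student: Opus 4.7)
The plan is to apply the branching pigeonhole principle (Lemma \ref{tree_pigeonhole}) in two successive stages, first thinning $U$ down to its intersection with $\Xi$, and then thinning further to those elements $g$ where $\Xi \upharpoonright g$ continues to block.

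For the first stage, I would write $2n_\Xi = n_\Xi + (n_\Xi+1) - 1$ and apply Lemma \ref{tree_pigeonhole} to $U$ with $R_1 := U \setminus \Xi$. Because $\Xi$ blocks at width $n_\Xi$, no $n_\Xi$-branching subset of $R_1$ can exist, so the lemma forces $U \cap \Xi$ to contain an $(n_\Xi+1)$-branching set $U'$. For the second stage, let $R_2 \subseteq U'$ consist of those $g$ for which $\Xi \upharpoonright g$ fails to block at width $n_\Xi$, and apply Lemma \ref{tree_pigeonhole} again, this time using $n_\Xi + 1 = n_\Xi + 2 - 1$. Either $U' \setminus R_2$ contains a $2$-branching subset $U^*$---exactly what is required, since each member lies in $\Xi$ and still blocks at width $n_\Xi$---or $R_2$ contains an $n_\Xi$-branching subset $V$, which I will rule out.

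The crux is the contradiction in the second case. For each $g \in V$, the failure of $\Xi \upharpoonright g$ to block supplies an $n_\Xi$-branching set $W_g$ of extensions of $g$ disjoint from $\Xi$. The one technical nuisance is that the recursive definition of an $n$-branching set of extensions of $f$ built by pasting $W_g$ above the elements of $V$ requires the $W_g$ to share a common length. I would handle this by exploiting the downward closure of $\Xi$: if $h \notin \Xi$ and $h \subseteq h'$, then $h' \notin \Xi$, so I can pad $W_g$ by attaching further $n_\Xi$-branching layers above each of its leaves, keeping the enlarged set disjoint from $\Xi$ and of any prescribed longer length. After padding every $W_g$ to a common length, $\bigcup_{g \in V} W_g$ is an $n_\Xi$-branching set of extensions of $f$ disjoint from $\Xi$, contradicting the blocking hypothesis.

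The main obstacle is really just this last bookkeeping step around lengths; the two pigeonhole applications are mechanical once the arithmetic $2n_\Xi = n_\Xi + (n_\Xi+1) - 1$ and $n_\Xi + 1 = n_\Xi + 2 - 1$ is observed, and the downward closure of $\Xi$ does all the work of transporting ``outside $\Xi$'' upward along extensions.
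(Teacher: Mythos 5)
Your proof is correct and follows essentially the same two-stage pigeonhole argument as the paper (first extract an $(n_\Xi+1)$-branching $U'\subseteq U\cap\Xi$, then thin to a $2$-branching $U^*$ of elements where blocking persists). The length-padding step you flag is a real detail that the paper's proof leaves implicit in the phrase ``these $g$ cannot contain an $n_\Xi$-branching subset,'' and your use of the downward closure of $\Xi$ to justify it is exactly right.
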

\begin{proof}
  By Lemma \ref{tree_pigeonhole}, either there is an $n_\Xi+1$-branching set of extensions of $f$, $U'\subseteq U\cap\Xi$, or there is an $n_\Xi$-branching set of extensions $U'\subseteq U$ with $U'\cap\Xi=\emptyset$.  Since the latter violates the assumption that $\Xi$ blocks at width $n_\Xi$, we have a $n_\Xi+1$-branching set of extensions of $f$, $U'\subseteq U\cap\Xi$.

Consider those $g\in U'$ such that there is a $n_\Xi$-branching set of extensions of $g$, $U_g$, disjoint from $\Xi$.  These $g$ cannot contain an $n_\Xi$-branching subset so by the same argument as above, we have $U^*\subseteq U'$ which is $2$-branching and such that whenever $g\in U^*$, $\Xi\upharpoonright g$ blocks at width $n_\Xi$.
\end{proof}

\begin{example}
Suppose that $\Xi$ is a family of completions blocking at width $3$. 
By definition, if any $U$ is at least $3$-branching, then \emph{one} element of $U$ is {in} $\Xi$.
Lemma \ref{thm:dnr_extension} shows that if $U$ is at least $6$-branching, then there are \emph{two} elements $g_0,g_1\in U$ in $\Xi$ with the stronger property that each $\Xi\upharpoonright g_i$ is also blocking at width $3$.
\end{example}

\begin{remark}
Note that because each 6-branching set is automatically 3-branching, the property of $U$ being $n$-branching becomes more restrictive as $n$ grows.
Therefore, the property of $\Xi$ blocking at width $n$ becomes less restrictive as $n$ grows.   
\end{remark}

For example, $\Xi=2^{<\omega}$ is a family of completions of $\lambda$ blocking at width $1$.
During the construction, we will be able to thin out $\Xi$ in exchange for permitting it to block at larger $n_\Xi$.

\subsection{The one step case}\label{sec:onestep}
The full proof requires an iterated forcing argument, but the main idea is coveyed in the ``one-step'' case.
In other words, we will solve only a single instance of $\mathrm{DNR}$ rather than iteratively creating a model of $\DNR$.
In sections \ref{sec:dDNR-iteration} and \ref{sec:dDNR-Ground}, this is extended to a full separation of $\DNR$ and $\WWKL$.

Let $d$ be a computable function.  We wish to construct a set $V^\infty$ and a $\mathrm{DNR}^{V^\infty}$ function $f^\infty$ so that no function computable from $V^\infty\oplus f^\infty$ is $d\mhyphen\mathrm{DNR}^{V^\infty}$.  (The existence of such $V^\infty$ and $f^\infty$, with $V^\infty$ computable, is Theorem 2.1 of \cite{MR2135656}, based on essentially the same result from \cite{MR2559129}; the argument here is based on theirs.) 

As usual, a solution to $\dDNR^{V^\infty}$ is a $ d$-bounded total function $g$ such that for each $e$, if $\Phi^{V^\infty}_e(e)\downarrow$ then $g(e)\neq \Phi^{V^\infty}_e(e)$. 
This definition is not ideal for the the forcing construction.
To simplify matters, we will work with another sense of ``diagonalization'' which is a consequence of $\dDNR^{V^\infty}$. 

\begin{definition}\label{defn.diagonalizes-against}
  Let $V^\infty$ be a partial function such that whenever $V^\infty(x)$ is defined, $V^\infty(x)=(i,n)$ with $i<d(x)$; for each $x$ such that $V^\infty(x)=(i,n)$ is defined, we set $V_0^\infty(x)=i$.  We say a total function $r$ \emph{diagonalizes against} $V^\infty$ if for every $x$, $r(x)<d(x)$ and whenever $V^\infty_0(x)$ is defined, $r(x)\neq V^\infty_0(x)$.
\end{definition}
\begin{prop}
There is a computable function $s$ so that any $d\mhyphen\mathrm{DNR}^{V^\infty}$ function computes a $d\comp s$-bounded total function $r$ which diagonalizes against $V^\infty$.
\end{prop}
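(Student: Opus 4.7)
The plan is to apply the $s$-$m$-$n$ theorem to produce a computable $s$ that embeds $V_0^\infty$ into the diagonal values $\varphi^{V^\infty}_{s(x)}(s(x))$, so that any $d\mhyphen\mathrm{DNR}^{V^\infty}$ function, once composed with $s$, automatically disagrees with $V_0^\infty$ everywhere the latter is defined.

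First, using $s$-$m$-$n$, I fix a computable $s:\omega\to\omega$ such that for every oracle $X$, every $x$, and every input $y$, the machine $\varphi^{X}_{s(x)}(y)$ first queries $X(x)$, and if $X(x)\downarrow=(i,n)$ outputs $i$ (and otherwise diverges). Applied to $X=V^\infty$, this gives
\[
\varphi^{V^\infty}_{s(x)}(s(x))\downarrow = V_0^\infty(x)
\]
precisely when $V_0^\infty(x)$ is defined. This is the key ingredient; it is a routine use of $s$-$m$-$n$ since the procedure is uniform in the parameter $x$ and is entirely independent of the actual index $s(x)$.

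Next, given any $d\mhyphen\mathrm{DNR}^{V^\infty}$ function $f$, set $r(x):=f(s(x))$. Then $r$ is total and computable from $f$ (together with the fixed computable $s$), and it is $(d\comp s)$-bounded since $f(s(x))<d(s(x))$. Whenever $V_0^\infty(x)$ is defined, $\varphi^{V^\infty}_{s(x)}(s(x))$ converges to $V_0^\infty(x)$, so the $\mathrm{DNR}^{V^\infty}$ property of $f$ immediately yields $r(x)=f(s(x))\neq V_0^\infty(x)$, which is the diagonalization condition of Definition \ref{defn.diagonalizes-against}.

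There is no real obstacle here; this is a standard diagonalization packaged via $s$-$m$-$n$, entirely analogous to the classical fact that every $\mathrm{DNR}$ function computes a function avoiding any given partial computable function relative to the oracle. The only mild point worth flagging is that the bound we obtain on $r$ is naturally $d\comp s$ rather than $d$ itself, which is precisely what the statement records.
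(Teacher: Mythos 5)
Your proof is correct and is essentially the same as the paper's: both apply the $s$-$m$-$n$ theorem to get a computable $s$ with $\Phi^{V^\infty}_{s(x)}(y)=V_0^\infty(x)$ for all $y$, and then set $r(x)=g(s(x))$ for the given $d\mhyphen\mathrm{DNR}^{V^\infty}$ function $g$. No further comment is needed.
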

\begin{proof}
By the s-m-n theorem, there is a total computable function $s$ such that for each $y$ and each $x$, $\Phi^{V^{\infty}}_{s(x)}(y) = i$ if there is some $n$ s.t. $V^\infty(x)=(i,n)$.
Note that $V_0^\infty(x)= \Phi^{V^{\infty}}_{s(x)}(y)$ for every input $y$.

Let $g$ be any $\dDNR^{V^\infty}$ function.  Then $g(s(x))\neq \Phi^{V^{\infty}}_{s(x)}(s(x))$ whenever the computation halts.
Finally, define $r(x) = g(s(x))$ for each $x$.
Then whenever $V_0^\infty(x)= \Phi^{V^{\infty}}_{s(x)}(s(x))$ is defined, $r(x)\neq V_0^\infty(x)$ as desired.
\end{proof}

\begin{definition}
  We write $\hat d(x)=d(s(x))$.
\end{definition}

Therefore it suffices to construct $V^\infty$ and a $\mathrm{DNR}^{V^\infty}$ function $f^\infty$ so that no function computable from $V^\infty\oplus f^\infty$ diagonalizes against $V^\infty$.

\begin{remark}
The second coordinate $V_1^\infty$ will be used in the forcing construction to allow us to define $V_0^\infty(x)$ at any stage without injuring any computations previously performed with oracle $V^\infty$. 
\end{remark}

To build these sets by forcing, our conditions will need to record some additional information.  
To build $V^\infty$, we will use a number $m$ to record the use of the longest computation used so far, and a finite set $R$ to record the inputs where we promise that $V^\infty$ will be undefined.  

To build $f^\infty$, we will use a function $\Xi$ from finite sets $V'$ to a family of completions $\Xi(V')$ that represents the possible options for $g\succeq f$ in the case we extend $V$ to $V'$. 
The ability to define different sets of completions for different future $V'$ will be used in the case where we force $\Phi_e^{V''\oplus g}(x)$ to diverge or be greater than $\hat d(x)$ for all future $V''$ and $g$. 

When building $f^\infty$, we will also use a number $n_\Xi$ to record the fixed amount of blocking satisfied by every element of the range of $\Xi$.  
As the construction proceeds $n_\Xi$ may increase (e.g.\ when we force $\Phi_e^{V^{\infty}\oplus f^\infty}$ to diverge or exceed $\hat d(x)$), but it will always remain a finite number.  

Formally, we will construct the desired pair $V^\infty,f^\infty$ by forcing with tuples
\[(m,V,R,f,n_\Xi,\Xi).\]
 where:
\begin{itemize}
\item $V$ is a finite partial function on $[0,m]$ such that when $x\in\dom(V)$, $V(x)=(i,n)$ for some $i<\hat d(x)$ and $n\leq m$,
\item $R$ is a finite set with $R\cap\dom(V)=\emptyset$,
\item $f\in\omega^{\subset\omega}$ is $\mathrm{DNR}^{V}$,
\item $\Xi$ is a function so that 
\begin{enumerate}
	\item when $V'\supseteq V$ with $\dom(V')\cap R=\emptyset$, $\Xi(V')$ is a family of completions of $f$ blocking at width $n_\Xi$, and
	\item If $V\subseteq V'\subseteq V''$ then $\Xi(V'')\subseteq \Xi(V')$.
\end{enumerate}

\end{itemize}

When we formalize this more carefully for the full construction, we will need $\Xi(V)$ to be precisely $\{f\mid \forall n\ C(n,f,V)\}$ for some computable relation $C$.  When formalized this way, being a condition is a $\Pi^0_1$ statment.  Therefore, because failing to be a condition is a $\Sigma^0_1$ question, we can assume something is a condition until we find a witness that it isn't.  
We will take this further in the iterated forcing, where we will force statements to hold for all conditions by considering each ``pre-condition'' and either demonstrating that the statement holds, or forcing a witness that the pre-condition is not a condition.

We say $(m',V',R',f',n'_\Xi,\Xi')\preceq(m,V,R,f,n_\Xi,\Xi)$ if 
\begin{itemize}
\item $V\subseteq V'$ and $R\subseteq R'$,
\item $m\leq m'$, and if $x\in\dom(V')\setminus \dom(V)$ with $V'(x)=(i,n)$ then $n>m$,
\item $f\subseteq f'$ and $f'\in\Xi(V')$, and
\item For every $V''\supseteq V'$, $\Xi'(V'')\subseteq\Xi(V'')\upharpoonright f'$.
\end{itemize}

Note that we can avoid the apparent $\Pi^0_2$ character of the last clause by only considering extensions which witness the extension syntactically (that is, in an immediate extension, $\Xi'(V'')$ should have the form
\[\{f\mid \forall n\ C'(n,f,V'')\wedge C(n,f,V'')\wedge C_0(n,f,V'')\wedge\cdots\wedge C_k(n,f,V'')\}\]
where $C_0$ through $C_k$ are the relations corresponding to previous conditions).

Note that $V'$ and $R'$ both potentially grow.  $V$ is an encoding of a partial function, while $R$ is the set of places where $V$ is undefined.  As we force, we extend both the places where $V$ is defined and also the places where $V$ is forced to be undefined.

Given $V$, we write $V_0,V_1$ for the functions with $\dom(V_0)=\dom(V_1)=\dom(V)$ so that $V(x)=(V_0(x),V_1(x))$ for all $x\in\dom(V)$.  
Because the definition of extending conditions requires that $V_1(x)>m$, extending to a condition with $V_0(x)=i$ will not not injure any previously referenced computation (which will all have use at most $m$).

It is convenient to write $V'\preceq(m,V,R)$ if $V\subseteq V'$, $\dom(V')\cap R=\emptyset$, and for each $x\in\dom(V')\setminus\dom(V)$, $V'_1(x)>m$.
Note that because $V'(x) > V'_1(x) > m$ for all $x\in \dom(V')\setminus \dom(V)$, and because $m$ is the use of the longest computations performed so far, then extending to $V'$ does not injure any computations which converged with oracle $V$.

We will construct an infinite sequence $(m^0,V^0,R^0,f^0,n^0_\Xi,\Xi^0)\succeq\cdots$, and we will let $V^\infty=\bigcup V^n$ and $f^\infty=\bigcup f^n$.  We speak of a condition $(m,V,R,f,n_\Xi,\Xi)$ forcing some statement regarding $V^\infty$ and $f^\infty$ if the statement will be true of every $V^\infty,f^\infty$ coming from such a sequence which includes $(m,V,R,f,n_\Xi,\Xi)$.

\bigskip
The main requirement we must satisfy is that for each $e$, we should force that $\Phi^{V^\infty\oplus f^\infty}_e$ is not a total $\hat d$-bounded function which diagonalizes against $V^\infty$.  

\begin{lemma}
\label{lemma.dDNR-one-step}
Given any $(m,V,R,f,n_\Xi,\Xi)$ and any $e\in\mathbb{N}$, there is an $x$ and a 
$(m',V',R',f',n'_\Xi,\Xi')\preceq(m,V,R,f,n_\Xi,\Xi)$ such that either:
\begin{itemize}
\item $\Phi^{V'\oplus f'}_e(x)=V'_0(x)$, or
\item Whenever $V''\preceq(m',V',R')$ and $g\in\Xi'(V'')$, if $\Phi^{V''\oplus g}_e(x)\downarrow$ then $\Phi^{V''\oplus g}_e(x)\geq \hat d(x)$.
\end{itemize}
\end{lemma}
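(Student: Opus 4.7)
The plan is to pick a fresh $x\notin\dom(V)\cup R$ (large enough that the structural constraint $V^\dagger_1(x)>m$ can be imposed) and dichotomize on whether the computation $\Phi^{V''\oplus g}_e(x)$ can be driven to a value below $\hat d(x)$ by some condition-compatible extension.

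In the affirmative subcase, suppose there exist $V^\dagger\preceq(m,V,R)$ and $g\in\Xi(V^\dagger)$ that is $\mathrm{DNR}^{V^\dagger}$ with $\Phi^{V^\dagger\oplus g}_e(x)=j'$ for some $j'<\hat d(x)$.  I would set $V'=V^\dagger\cup\{(x,(j',m_1))\}$ for an appropriate $m_1>m$ (or $V'=V^\dagger$ if $V^\dagger_0(x)$ is already $j'$), set $f'=g$, keep $n'_\Xi=n_\Xi$, let $m'$ exceed the use of the computation, and define $\Xi'(V'')=\Xi(V'')\upharpoonright g$.  This realizes the first alternative: $\Phi^{V'\oplus f'}_e(x)=j'=V'_0(x)$.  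The function $g$ itself would be produced by applying Lemma~\ref{thm:dnr_extension} to $\Xi(V^\dagger)$ together with a $2n_\Xi$-branching set of extensions of $f$ whose members all compute $\Phi^{V^\dagger\oplus\cdot}_e(x)=j'$, extracting a 2-branching $U^*\subseteq\Xi(V^\dagger)$ whose restrictions $\Xi(V^\dagger)\upharpoonright h$ each block at $n_\Xi$, and then applying Lemma~\ref{lemma.branch_gives_DNR} to pick $g\in U^*$ that is $\mathrm{DNR}^{V^\dagger}$.

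In the negative subcase, leave $V'=V$, $f'=f$, $R'=R$, $m'=m$, raise the blocking width to $n'_\Xi=2n_\Xi$, and define
\[
  \Xi'(V'')=\Xi(V'')\cap\bigl\{g:\Phi^{V''\oplus g}_e(x)\uparrow\text{ or }\Phi^{V''\oplus g}_e(x)\geq\hat d(x)\bigr\}.
\]
The second alternative holds by construction.  The substantive verification is that $\Xi'(V'')$ is still a family of completions blocking at $2n_\Xi$ for every $V''\preceq(m,V,R)$.  Given a $2n_\Xi$-branching set $W$ of extensions of $f$, Lemma~\ref{thm:dnr_extension} yields a $2$-branching $W^*\subseteq W\cap\Xi(V'')$; Lemma~\ref{lemma.branch_gives_DNR} then supplies a $\mathrm{DNR}^{V''}$ element $g\in W^*$; and the negative-case hypothesis forces $\Phi^{V''\oplus g}_e(x)\uparrow$ or $\geq\hat d(x)$, placing $g$ in $\Xi'(V'')$.

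The main obstacle I anticipate is in the affirmative case: arranging the chosen $g$ so that $\Xi(V'')\upharpoonright g$ blocks at a uniform width for \emph{every} $V''\preceq(m',V',R')$, not merely at $V^\dagger$.  Lemma~\ref{thm:dnr_extension} guarantees blocking one $V''$ at a time, so I expect the verification to proceed by re-invoking that lemma inside the blocking check for $\Xi'$, mirroring the argument in the negative case, rather than relying on a single application of Lemma~\ref{thm:dnr_extension} at $V^\dagger$ to handle all of $V''\supseteq V^\dagger$ simultaneously.
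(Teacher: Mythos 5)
Your overall architecture (dichotomize on whether the computation can be forced to converge below $\hat d(x)$; in the negative case thin $\Xi$ and raise the blocking width; in the affirmative case set $V_0(x)$ equal to the computed value) matches the paper's, but the dichotomy you pose is too weak on its affirmative side, and that is where the real content of the lemma lives. You ask only for a \emph{single} $V^\dagger$ and a single $g\in\Xi(V^\dagger)$ with $\Phi^{V^\dagger\oplus g}_e(x)=j'<\hat d(x)$. From one such $g$ you cannot conclude that there is a $2n_\Xi$-branching set of extensions all computing the \emph{same} value $j'$ --- yet your affirmative case silently assumes exactly such a set in order to apply Lemma~\ref{thm:dnr_extension} and Lemma~\ref{lemma.branch_gives_DNR}. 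The paper instead asks whether there is a $\hat d(x)(2n_\Xi-1)+1$-branching set $U$ all of whose members give \emph{some} convergent value $<\hat d(x)$; the iterated pigeonhole (Lemma~\ref{lemma.dDNR-iteratedpigeon}) over the $\hat d(x)$ possible outputs then extracts a $2n_\Xi$-branching $U'\subseteq U$ with a common value $i$, and only then is $V_0(x)$ set to $i$. This is the reason the factor $\hat d(x)$ appears in the blocking widths throughout the paper; without it the affirmative case collapses. (Your negative case does verify blocking \emph{for your dichotomy}, but once the dichotomy is corrected the negative hypothesis only says that no large branching set converges small, and you then need Lemma~\ref{tree_pigeonhole} to see that $\Xi'$ blocks at a width on the order of $\hat d(x)(2n_\Xi-1)+1+n_\Xi-1$ rather than $2n_\Xi$.)

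Two further points. First, you let $V^\dagger$ range over all of $\preceq(m,V,R)$, so $V^\dagger$ may already define $V^\dagger_0(x)$ with a value different from $j'$, in which case neither disjunct of the lemma is achieved; the paper avoids this by searching only over $V'\preceq(m,V,R\cup\{x\})$, i.e.\ by reserving $x$ during the search. Second, you correctly identify the dead-end problem in the affirmative case, but your proposed repair (re-invoking Lemma~\ref{thm:dnr_extension} ``inside the blocking check,'' as in the negative case) cannot work there: once you commit to $f'=g$ and to $V'_0(x)=i$, you cannot later swap $g$ out. The paper's fix is a finite-descent argument: if some future $V^*\preceq(m',V'',R)$ ejects a member of $U'$ from $\Xi$ (or destroys the blocking of $\Xi\upharpoonright g$), replace $V''$ by that $V^*$; each replacement strictly shrinks the finite set $U'\cap\Xi(V'')$ (respectively the set of $g$ with $\Xi(V'')\upharpoonright g$ blocking), while the branching of $U'$ and the blocking width of $\Xi$ stay fixed, so after finitely many replacements the $g$ chosen via Lemma~\ref{thm:dnr_extension} is stable under all further extensions of $V$. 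This stabilization step is essential and is missing from your sketch.
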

\begin{proof}
Let $(m,V,R,f,n_\Xi,\Xi)$ be given; choose a value $x\not\in R\cup\dom(V)$.  We ask whether the following exists:
\begin{quote}
A $V'\preceq(m,V,R\cup\{x\})$ and a $\hat d(x)(2n_\Xi-1)+1$-branching set of extensions $U$ of $f$ so that for every $g\in U$, $\Phi^{V'\oplus g}_e(x)\downarrow$ and $\Phi^{V'\oplus g}_e(x)<\hat d(x)$.
\end{quote}

Suppose not.  
Then we can restrict to a family of completions $\Xi'$ which will allow us to force the second case.
For each $V'\preceq(m,V,R\cup\{x\})$, we define 
$\Xi'(V')=\{g\in\Xi(V')\mid\Phi^{V'\oplus g}_e(x)\uparrow\text{ or }\Phi^{V'\oplus g}_e(x)\geq \hat d(x)\}$.
By assumption,  every $\hat d(x)(2n_\Xi-1)+1$-branching set of extensions $U$ of $f$ contains an element of $\Xi'(V')$, so $\Xi'(V')$ satisfies the definition of blocking at width $\hat d(x)(2n_\Xi-1)+1$.  We now extend to the condition
\[(x,V,R\cup\{x\},f,\hat d(x)(2n_\Xi-1)+1,\Xi')\preceq(m,V,R,f,n_\Xi,\Xi).\]
Note that for any $V''\preceq(x,V,R\cup\{x\})$ and any $g\in\Xi'(V'')$, either $\Phi^{V'\oplus g}_e(x)\uparrow$ or $\Phi^{V'\oplus g}_e(x)\geq \hat d(x)$, so the second case is forced.
 
Suppose instead that we find such a $V'$ and such a $U$.  We color $g\in U$ by $i<\hat d(x)$ based on the value of $\Phi^{V'\oplus g}_e(x)$.  There is a $2n_\Xi$-branching $U'\subseteq U$ and an $i<\hat d(x)$ so that for each $g\in U'$, $\Phi^{V'\oplus g}_e(x)=i$.  Now we consider the extension $V''=V'\cup\{(x,(i,|V'|+1))\}$ and let $m'$ be larger than any value in $V''$, so $(m',V'',R)$ is a valid partial condition.

All that remains is picking a particular $g\in U'$ to extend $f$ to.  This requires a bit of care, because it could be that there is some $g\in U'\cap \Xi(V'')$, but for some $V^*\preceq(m',V'',R)$, $g\not\in\Xi(V^*)$.\footnote{For example, if we forced divergence at a previous stage, strings might leave $\Xi(V'')$ once $V''$ is long enough to cause the relevant computation to converge.}  Suppose there were such a $g$; then we could replace $V''$ with this $V^*$, changing $m'$ accordingly.  Each time we do this we cause $|U'\cap \Xi(V'')|$ to decrease.  However, the level of blocking of $\Xi$ and the branching of $U'$ remains constant, so we do this only finitely many times.

Therefore, without loss of generality, we may assume that this never happens---that for every $V^*\preceq(m',V'',R)$, $U\cap\Xi(V'')=U\cap\Xi(V^*)$.

Similarly, we know from Lemma \ref{thm:dnr_extension} that for each $V^*\preceq(m',V'',R)$ there is some $g\in U'$ such that $\Xi\upharpoonright g$ blocks at width $n_\Xi$.  Again, however, the choice of $g$ could depend upon $V^*$.  We may use the same process: if there is a $g\in U'$ and such a $V^*$, we replace $V''$ with $V^*$; each time we do this, the set of $g$ blocking at width $n_\Xi$ decreases, so without loss of generality, we may assume that for every $V^*\preceq(m',V'',R)$, if $\Xi(V'')\upharpoonright g$ blocks at width $n_\Xi$ then $\Xi(V^*)$ blocks at width $n_\Xi$ as well.

By Lemma \ref{thm:dnr_extension}, we choose a $g\in U'\cap\Xi(V'')$ such that $\Xi(V'')\upharpoonright g$ blocks at width $n_\Xi$ and pass to the condition $(m',V'',R,g,n_\Xi,\Xi\upharpoonright g)$; our choice of $V''$ and $g$ ensures that this is a requirement.  We have $\Phi^{V''\oplus g}_e(x)=i=V''_0(x)$, so we have forced that $\Phi^{V^\infty\oplus g^\infty}_e(x)$ does not diagonalize against $V^\infty$.
\end{proof}

We will next adapt this ``one step'' construction to produce an $\omega$-model separation.  Towards this end, we will separate Lemma \ref{lemma.dDNR-one-step} into two parts: one concerned with extending the function $f$ (the iterated forcing), and one concerned with extending the set $V$ (the ground forcing).

\begin{remark}[Ground Forcing]
In both the one-step forcing and the ground forcing, we ask ``what can I force the strings $f$ to do as oracles'' and then we define $V_0(x)$ accordingly.
Because of this similarity, the first step of Lemma \ref{lemma.dDNR-one-step} where we break into cases and find $V'$ and $U$ is essentially repeated in the ground-forcing Lemma \ref{lemma.dDNR-ground-iteration}.  
The two main changes are that (1) the particular requirement concerning $\dDNR^V$  is replaced with the more general requirement $\mathcal{K}$, and (2) we will merely be guaranteeing density of $\mathcal{K}$ rather than actually selecting $g$.  
\end{remark}

\begin{remark}[Avoiding Dead Ends when Forcing $g$]
Note that before we chose which $g\in U'$ that we extended $f$ to, we needed to avoid potential dead ends.
This means that whenever it was possible, we extended $V$ to $V^*$ so that $(g,n_\Xi,\Xi)$ was not a valid condition.
We were able to use the constant amount of blocking in $\Xi$ to verify that this process had to stop: we could not prevent \emph{all} the $g$ from being valid conditions.
This idea is incorporated into the Ground Forcing Lemma \ref{lemma.dDNR-ground-iteration}.
\end{remark}

In the iteration forcing, we are given a fixed $V_0$, and we `hope' that we have enough density of strings in $\Xi$ to permit us to force $f$ to do the right thing as an oracle.  
Thus, although ideas and themes from Lemma \ref{lemma.dDNR-one-step} do appear in the iteration forcing Lemma \ref{lemma-dDNR-iteration-extending-f}, there is a much weaker resemblance.

\begin{remark}[Iteration Forcing]
In the iteration forcing, we will introduce generalized ``requirements'' will consist of a family of strings that have/give the ``correct'' response to the given values of $V_0(x)$. 
We will say that a requirement is ``uniformly dense'' if it can either be strongly avoided (we will call these requirements ``not essential'') or if it is sufficiently blocking so that there are enough strings in $\Xi$ so that we can satisfy the requirement and continue the construction.

The main work of the iterated construction is to determine the amount of density required for us to satisfy conditions $\mathcal{K}^X$ and to show that there are already specific conditions $\mathcal{R}^X$ which, when satisfied by a total function $f$, guarantee that the general conditions $\mathcal{K}^{X\oplus f}$ remain sufficiently dense.
\end{remark}

\begin{remark}[Essential Requirements]
In the above construction, we used the fact that either you can define $V_0(x)$ by finding a $\hat d(x)(2n_\Xi-1)+1$-branching set of strings yielding convergent computations that aren't too big, or you can restrict yourself to strings that yield computations that diverge or are too big.
In the iteration construction, this combinatorial idea becomes a dichotomy; the requirements satisfying the convergent case will be called ``essential''.
\end{remark}

\subsection{Forcing Solutions of \DNR}\label{sec:dDNR-iteration}

We now turn to the general iterated forcing construction.  
Our setting in this subsection is that we have already constructed $X= V^\infty\oplus f^\infty_0 \oplus \dots \oplus f^\infty_m$. 
The ground forcing in the next section will show how to build $V^\infty$.  
Without loss of generality, we may assume that functions $f^\infty_i$ have already been built by iterating the construction in this section.

We now wish to construct a $\mathrm{DNR}^X$ function $f^\infty$ such that no $X\oplus f^\infty$-computable function ``diagonalizes against ${V^\infty}$'' (recall Definition \ref{defn.diagonalizes-against}).  
Because $V^\infty\leq_T X\oplus f^\infty$, it follows that that no $X\oplus f^\infty$-computable function solves $\dDNR^{X\oplus f^\infty}$. 

We will construct $f^\infty$ using Mathias forcing.
As in the one-step construction, we will write $V_0(x)$ to represent the Turing computation with oracle $V^\infty$ that outputs $i$ if $V^\infty(x) = \langle i,n\rangle$ for some $n\in \mathbb{N}$.

As in the one-step construction, it suffices to construct a $\mathrm{DNR}^{X}$ function $f^\infty$ such that for each $e$, either $\Phi_e^{X\oplus f^\infty}$ is not total or $\hat d$-bounded, or such that $\Phi_e^{X\oplus f^\infty}(x)\downarrow = i =V_0(x)$ for some $x$ and some $i<\hat d(x)$.

\begin{remark}
Notice that the set $V^\infty$, which we will build once in the ground forcing section, will remain the single witness that $X\oplus f^\infty$ does not compute a solution to $\dDNR^{X\oplus f^\infty}$ no matter how many times we iterate this construction to add new functions to $X$.
\end{remark}


\begin{definition}\label{defn.iteration-conditions}
We define $\mathbb{P}^X$ to be the set of triples $(f,n_\Xi,\Xi)$ such that $f$ is $\mathrm{DNR}^X$ and $\Xi$ is a family of extensions of $f$ which blocks at width $n_\Xi$.  We say $(f',n'_\Xi,\Xi')\preceq(f,n_\Xi,\Xi)$ if $f'\in\Xi$ and $\Xi'\subseteq \Xi$.

We say that a pair of $\Sigma_1^X$ families of strings
\[\mathcal{K}^{X,-}(x)=\{f\in \omega^{\subset\omega}\mid \exists y R^{X,-}(x,y,f)\}=\{f\in \omega^{\subset\omega}\mid \exists y \Phi_{e_0}^{X}(x,y,f)\}\]
\[\mathcal{K}^{X,+}(x,i)=\{f\in \omega^{\subset\omega}\mid \exists y R^{X,+}(x,y,i,f)\}=\{f\in \omega^{\subset\omega}\mid\exists y \Phi_{e_1}^{X}(x,y,i,f)\}\]
is a \emph{requirement} if $R^{X,-}$ and $R^{X,+}$ are $X$-computable relations such that 
\[\text{if }R^{X,-}(x,y,f)\text{, there is an }i\leq \hat d(x)\text{ such that }R^{X,+}(x,i,y,f).\]
In particular, requirements satisfy that $\K^{X,-}(x) \subseteq \bigcup_{i\leq \hat d(x)}\K^{X,+}(x,i)$.


We also define 
\[\mathcal{K}^{X,+}(x)=\begin{cases}
			\mathcal{K}^{X,+}(x,i)&\text{if }V_0(x)=i<\hat d(x)\\
			\mathcal{K}^{X,+}(x,\hat d(x))&\text{if }V_0(x)\geq \hat d(x)\text{ or }V_0(x)\uparrow
\end{cases}\]
\end{definition}
In order to force a requirement "negatively", we must arrange that all extensions avoid $\mathcal{K}^{X,-}(x)$ while in order to force a requirement "positively" we must place an extension in $\mathcal{K}^{X,+}(x)$.\footnote{The reader may wonder why we need to distinguish the positive and negative requirements, which was not needed in \cite{LST}.  In the constructions in \cite{LST}, we could look for enough extensions producing witnesses and then address all witnesses simultaneously.  Here, on the other hand, we will have to first pare down our collection of extensions to first, because it will not be possible to address all witnesses at once.}

Note that we also refer to $\mathcal{K}^X$, with no parameter, when we want to discuss the requirement itself, rather than the set of conditions it picks out.

We write $\mathcal{K}^X_{e_0,e_1}$ for the potential requirement where $R^{X,-}$ is $\Phi^X_{e_0}$, $R^{X,+}$ is $\Phi^X_{e_1}$.

\begin{example}The main example is the diagonalization requirement.

\begin{itemize}
	\item   $\mathcal{D}_{e,d}^{X,-}(x)$ consists of those $f$ such that $\Phi^{X\oplus f}_e(x)\downarrow$.  
	\item   For $i<\hat d(x)$, $\mathcal{D}_{e,d}^{X,+}(x,i)$ is the set of $f$ such that $\Phi^{X\oplus f}_e(x)\downarrow=i$.
	\item   If $i=\hat d(x)$, $\D_{e,d}^{X,+}(x,\hat d(x))$ is the set of $f$ such that $\Phi^{X\oplus f}_e(x)\downarrow\geq \hat d(x)$
	\item $\D_{e,d}^{X,+}(x) = \begin{cases}
																\D_{e,d}^{X,+}(x,i) & \text{if }V_0(x)\downarrow =i <\hat d(x)\\[.25cm]
																\D_{e,d}^{X,+}(x,\hat d(x)) & \text{if } V_0(x) \geq \hat d(x) \text{ or } V_0(x)\uparrow \\[.25cm]
														 \end{cases}$
\end{itemize}

	Therefore, if $V_0(x)\downarrow=i<\hat d(x)$, then $\D_{e,d}^{X,+}(x) $ is the set of $f$ such that $\Phi^{X\oplus f}_e(x)=i$.
	Otherwise, when $V_0(x)$ diverges or is at least $\hat d(x)$, then $\D_{e,d}^{X,+}(x) $ is the set of $f$ such that $\Phi^{X\oplus f}_e(x)\geq \hat d(x)$.
	
\end{example}

\begin{definition}
We say that a \emph{total} function $f^\infty:\omega\rightarrow\omega$ \emph{settles} $\mathcal{K}^X$ if there is some $x$ such that either there is a finite $f\subset f^\infty$ with $f\in\mathcal{K}^{X,+}(x)$, or for every finite $f\subset f^\infty$, $f\not\in\mathcal{K}^{X,-}(x)$.  (Note that $f$ need not be an initial segment of $f^\infty$).
\end{definition}

Our eventual goal is to build $f^\infty$ to settle all ``essential'' requirements $\K^X$ (as defined in Definition \ref{defn.dDNR-essential}).
First, we verify that settling each $\D^X$ is enough to avoid solving $\dDNR$.

\begin{lemma}
  Suppose $f^\infty$ settles $\mathcal{D}_{e,d}^X$ for every $e$.  Then $X\oplus f^\infty$ does not compute any $d\mhyphen\mathrm{DNR}^{V^\infty}$ function.
\end{lemma}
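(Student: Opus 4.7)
The plan is to derive a contradiction from the assumption that some total $d\mhyphen\mathrm{DNR}^{V^\infty}$ function $g$ is computed from $X\oplus f^\infty$. First, I would feed $g$ through the preceding Proposition to obtain the function $r(x)=g(s(x))$, which is total, $\hat d$-bounded, and diagonalizes against $V^\infty$. Since $s$ is total computable and $g$ is computable from $X\oplus f^\infty$, so is $r$; by the $s$-$m$-$n$ theorem fix a Turing index $e$ with $\Phi^{X\oplus f^\infty}_e(x)=r(x)$ for every $x$. I would then invoke the hypothesis that $f^\infty$ settles $\mathcal{D}^{X}_{e,d}$ at this particular $e$, producing some $x$ at which one of the two settling alternatives must hold.

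The ``negative'' alternative---that no finite $f\subset f^\infty$ lies in $\mathcal{D}^{X,-}_{e,d}(x)$---is ruled out immediately, since $r(x)=\Phi^{X\oplus f^\infty}_e(x)$ converges by totality, and therefore the restriction of $f^\infty$ to the use of that computation is a finite subset of $f^\infty$ on which $\Phi^{X\oplus f}_e(x)$ already halts, placing that $f$ in $\mathcal{D}^{X,-}_{e,d}(x)$.

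So the ``positive'' alternative must hold: there is a finite $f\subset f^\infty$ with $f\in\mathcal{D}^{X,+}_{e,d}(x)$. At this point I would split on the piecewise definition of $\mathcal{D}^{X,+}_{e,d}(x)$. If $V^\infty_0(x)\downarrow = i<\hat d(x)$, then by definition $\Phi^{X\oplus f}_e(x)=i$; since enlarging the oracle from $f$ up to $f^\infty$ preserves already-converged computations, this gives $r(x)=i=V^\infty_0(x)$, contradicting the diagonalization clause. Otherwise (i.e.\ $V^\infty_0(x)\uparrow$ or $V^\infty_0(x)\geq\hat d(x)$), the same definition forces $\Phi^{X\oplus f}_e(x)\geq\hat d(x)$, hence $r(x)\geq\hat d(x)$, contradicting the $\hat d$-boundedness of $r$. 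Either way we reach a contradiction, so no such $g$ can exist.

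The proof is essentially an unpacking of definitions; the only nonobvious step is threading the $s$-index shift so that the hypothesis ``$f^\infty$ settles $\mathcal{D}^X_{e,d}$ for every $e$'' gets applied at the index that computes $r$ rather than the index computing $g$ itself (which is precisely why the hypothesis quantifies over all $e$). The case split in the third paragraph then just matches the two clauses of the definition of $\mathcal{D}^{X,+}_{e,d}(x)$ to the two possible ways a putative $\hat d$-bounded diagonalizer could fail: either by agreeing with $V^\infty_0$ on a defined input, or by violating the $\hat d$ bound.
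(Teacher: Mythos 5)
Your proof is correct and follows essentially the same route as the paper's: the paper argues directly that for each $e$ the function $\Phi^{X\oplus f^\infty}_e$ is either non-total, not $\hat d$-bounded, or agrees with $V_0$ somewhere (hence fails to diagonalize), using exactly your case split on the two settling alternatives and the two clauses of $\mathcal{D}^{X,+}_{e,d}(x)$. Your framing as a contradiction via the index for $r=g\circ s$ just makes explicit the reduction (via the Proposition and the quantification over all $e$) that the paper leaves implicit in its opening sentence.
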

\begin{proof}
It suffices to show that for each $e$, $\Phi^{X\oplus f}_e$ does not diagonalize against $V^\infty$.  Since $f^\infty$ settles $\mathcal{D}_{e,d}^X$, we let $x$ be as in the definition of ``settles,'' and we consider the two cases.  

If there is some $f\subset f^\infty$ with $f\in\mathcal{K}^{X,+}(x)$, we must have $\Phi^{X\oplus f}_{e,t}(x)\downarrow$ at some stage $t$.
By the usual convention, this means that $\{0,\dots,t\}\subseteq dom(f)$, and therefore $f\upto t=f^\infty\upto t$.
Examining the definition of $\mathcal{D}^{X,+}$, we see there are two subcases.
If $V_0(x)$ is at least $\hat d(x)$ or diverges then $\Phi^{X\oplus f}_e(x)\geq \hat d(x)$.  
If $V_0(x)\downarrow<\hat d(x)$, then $\Phi^{X\oplus f}_e(x)=V_0(x)$.  
Either way, $\Phi^{X\oplus f^\infty}_e$ is either not $\hat d$ bounded or it does not diagonalize against $V_0(x)$.

On the other hand if every $f\subset f^\infty$ avoids $\mathcal{K}^{X,-}(x)$, it must be that $\Phi^{X\oplus f^\infty}_e(x)\uparrow$, so $\Phi^{X\oplus f^\infty}_e$ is not total.
\end{proof}

In the one-step construction, we saw that if you could not find a $\hat d(x)(2n_\Xi-1)+1$-branching set of one strings to force one outcome, then we can restrict to conditions that force the other outcome.
A similar idea works if there is \emph{any} limit on the amount of branching in the sets which yield the first outcome.
This suggests the following definition of which requirements ``essential.'' 

\begin{definition}\label{defn.dDNR-essential}
$\mathcal{K}^X$ is \emph{essential} below $(f,n_\Xi,\Xi)$ if for every $x$ and every $m$ there is an $m$-branching set of extensions of $f$, $U\subseteq \Xi\cap\mathcal{K}^{X,-}(x)$.

$\mathcal{K}^X$ is \emph{uniformly dense} if whenever $\mathcal{K}^X$ is essential below $(f,n_\Xi,\Xi)$, there is some $x$ and some $2n_\Xi$-branching set of extensions of $f$, $U\subseteq \mathcal{K}^{X,+}(x)$.
\end{definition}

\begin{remark}
Note that saying that $U\subseteq \K^{X,+}(x)$ is stronger than ``there is \emph{some} $i$ such that $U\subseteq \K^{X,+}(x,i)$,''
because the correct $i$ depends on $V_0(x)$. 
\end{remark}

Intuitively, ``$\K^X$ is uniformly dense below $q$'' means ``whenever you have many options to extend $q$ so that $\K^{X,-}$ holds, you will have one extension which allows you to force the stronger condition $\K^{X,+}$ to hold.''
In Theorem \ref{thm:settling_everything}, we will show that uniform density of a requirement $\K^X$ allows you to force either $\neg\K^{X,-}$ or $\K^{X,+}$.

\begin{lemma}\label{lemma-dDNR-iteration-extending-f}
Suppose $\mathcal{K}^X$ is uniformly dense.  Then for every $(f,n_\Xi,\Xi)$, there is an $(f',n'_\Xi,\Xi')\preceq(f,n_\Xi,\Xi)$ and an $x$ such that either $f'\in\mathcal{K}^{X,+}(x)$ or $\Xi'\cap\mathcal{K}^{X,-}(x)=\emptyset$.
\end{lemma}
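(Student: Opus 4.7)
The plan is to split into two cases based on whether $\mathcal{K}^X$ is essential below the given condition $(f,n_\Xi,\Xi)$.

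Suppose first that $\mathcal{K}^X$ is \emph{not} essential, and fix $x$ and $m$ witnessing this, so no $m$-branching set of extensions of $f$ lies inside $\Xi \cap \mathcal{K}^{X,-}(x)$. I would take $f':=f$, $\Xi':=\Xi\setminus\mathcal{K}^{X,-}(x)$, and $n'_\Xi:=n_\Xi+m-1$. Because $\mathcal{K}^{X,-}(x)$ is upward closed under $\subseteq$ (its $\Sigma^0_1$ witness is monotone in $f$, as in the canonical example $\mathcal{D}^{X,-}_{e,d}$), the set $\Xi'$ remains downward closed inside $\Xi$ and so is a family of completions of $f$. To show it blocks at width $n_\Xi + m - 1$, I would chain two applications of Lemma \ref{tree_pigeonhole}: given any $(n_\Xi+m-1)$-branching $U$, first split by $\Xi$ to produce an $m$-branching $U_1\subseteq U\cap\Xi$ (the alternative, an $n_\Xi$-branching set missing $\Xi$, is ruled out by the blocking of $\Xi$), then split $U_1$ by $\mathcal{K}^{X,-}(x)$ to produce a member of $U\cap\Xi'$ (the alternative, an $m$-branching set inside $\mathcal{K}^{X,-}(x)$, contradicts the choice of $m$). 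By construction $\Xi'\cap\mathcal{K}^{X,-}(x)=\emptyset$, finishing this case.

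If instead $\mathcal{K}^X$ \emph{is} essential, uniform density supplies an $x$ and a $2n_\Xi$-branching set $U$ of extensions of $f$ with $U\subseteq\mathcal{K}^{X,+}(x)$. Applying Lemma \ref{thm:dnr_extension} to this $U$ and $\Xi$ yields a $2$-branching $U^*\subseteq U\cap\Xi$ such that $\Xi\upharpoonright g$ blocks at width $n_\Xi$ for every $g\in U^*$; Lemma \ref{lemma.branch_gives_DNR} then selects some $g\in U^*$ that is $\mathrm{DNR}^X$. Setting $f':=g$, $\Xi':=\Xi\upharpoonright g$, and $n'_\Xi:=n_\Xi$ gives a condition refining $(f,n_\Xi,\Xi)$ with $f'\in U\subseteq\mathcal{K}^{X,+}(x)$.

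The main obstacle is the non-essential case: one must pick $\Xi'$ so that (i) it is itself a family of completions of $f$, (ii) it still blocks at some finite width, and (iii) that width is computable from the given data. The double-pigeonhole argument above shows that the price of excising $\mathcal{K}^{X,-}(x)$ is exactly an additive loss of $m-1$ in the blocking width, with the upward-closure of $\mathcal{K}^{X,-}(x)$ doing the work of preserving downward-closure of $\Xi'$. The essential case, by contrast, is essentially a direct unpacking of Lemmas \ref{lemma.branch_gives_DNR} and \ref{thm:dnr_extension}.
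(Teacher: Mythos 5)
Your proof is correct and follows essentially the same route as the paper: a case split on essentialness, with the non-essential case handled by excising $\mathcal{K}^{X,-}(x)$ from $\Xi$ at the cost of raising the blocking width to $n_\Xi+m-1$ via Lemma \ref{tree_pigeonhole}, and the essential case handled by Lemmas \ref{thm:dnr_extension} and \ref{lemma.branch_gives_DNR} exactly as in the text. Your explicit appeal to monotonicity of $\mathcal{K}^{X,-}(x)$ under $\subseteq$ to keep $\Xi'$ downward closed addresses a detail the paper silently glosses over (the definition of requirement in this section does not literally build in the $\exists f'\subseteq f$ clause that the \RKL{} section does), but that is clearly the intended reading and the rest of your argument is identical.
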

\begin{proof}
If $\mathcal{K}^X$ is not essential below $(f,n_\Xi,\Xi)$ then for some $x$ and some $m$, whenever $U$ is an $m$-branching set of extensions of $f$, there is a $g\in U$ with $g\not\in\mathcal{K}^{X,-}(x)$.  Let $\Xi'\subseteq\Xi$ consist of those $g$ such that $g\in\Xi$ and $g\not\in\mathcal{K}^{X,-}(x)$.  
Then by Lemma \ref{tree_pigeonhole} and the definition of $U$, any $m+n_\Xi-1$-branching set contains an $n_\Xi$-branching subset of $U$.   
Because $\Xi$ is blocking at width $n_\Xi$, it follows that $\Xi'$ is blocking at width $m+n_\Xi-1$.
In particular, $(f,m+n_\Xi-1,\Xi')\preceq(f,n_\Xi,\Xi)$ satisfies the second case.

Otherwise, $\mathcal{K}^X$ is essential below $(f,n_\Xi,\Xi)$, so by uniform density, we may find some particular $x$ and some $2n_\Xi$-branching set of extensions $f$, $U\subseteq \mathcal{K}^{X,+}(x)$  
By Lemma \ref{thm:dnr_extension} we find a $2$-branching set of $g\in U\cap\mathcal{K}^{X,+}(x)$ and by Lemma \ref{lemma.branch_gives_DNR} we may find a $g$ so that $(g,n_\Xi,\Xi\upharpoonright g)$ is a condition below $(f,n_\Xi,\Xi)$
\end{proof}

\begin{theorem}\label{thm:settling_everything}
  Fix any countable collection of uniformly dense requirements $\mathcal{K}^X_k$.  Then there is a $\mathrm{DNR}^X$ function $f^\infty$ settling every $\mathcal{K}^X_k$.
\end{theorem}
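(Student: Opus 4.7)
The plan is to build $f^\infty$ as the union of a descending sequence of conditions in $\mathbb{P}^X$, $(f_0,n_0,\Xi_0)\succeq(f_1,n_1,\Xi_1)\succeq\cdots$, constructed by a recursion that interleaves \emph{requirement stages} (one per $\mathcal{K}^X_k$ in the given enumeration) with \emph{domain stages} (one per $x\in\omega$), so that every requirement is eventually addressed and every natural number ends up in $\dom(f^\infty)$.  One may start from a trivial condition, for instance $f_0=\emptyset$, $n_0=1$, $\Xi_0=\omega^{\subset\omega}$.

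A requirement stage for $\mathcal{K}^X_k$ is handled by a direct application of Lemma \ref{lemma-dDNR-iteration-extending-f}, which, thanks to the uniform density hypothesis, produces an extension $(f_{j+1},n_{j+1},\Xi_{j+1})\preceq(f_j,n_j,\Xi_j)$ together with some $x_k$ such that either $f_{j+1}\in\mathcal{K}^{X,+}_k(x_k)$ or $\Xi_{j+1}\cap\mathcal{K}^{X,-}_k(x_k)=\emptyset$; record $x_k$ and the case and continue.  A domain stage for $x$ forces $x$ into the domain of the next function as follows: take any $2n_j$-branching set $U$ of extensions of $f_j$ of length $1$ whose new coordinate is $x$, apply Lemma \ref{thm:dnr_extension} to extract a $2$-branching $U^*\subseteq U\cap\Xi_j$ on which $\Xi_j\upto g$ still blocks at width $n_j$ for every $g\in U^*$, and then invoke Lemma \ref{lemma.branch_gives_DNR} to pick $g\in U^*$ that is $\mathrm{DNR}^X$.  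The new condition is $(g,n_j,\Xi_j\upto g)$, and by construction $x\in\dom(g)$.

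Setting $f^\infty=\bigcup_j f_j$, totality and the $\mathrm{DNR}^X$ property are immediate from the construction.  To verify that $f^\infty$ settles each $\mathcal{K}^X_k$, take the witness $x_k$ recorded at the corresponding requirement stage.  In the positive case $f_{j+1}\subset f^\infty$ already lies in $\mathcal{K}^{X,+}_k(x_k)$ and we are done.  In the negative case the descending chain forces $f_m\in\Xi_{j+1}$ for all $m\geq j+1$, and the downward-closure (``family of completions'') property of $\Xi_{j+1}$ propagates this to every finite $h\subset f^\infty$ extending $f_{j+1}$, so no such $h$ meets $\mathcal{K}^{X,-}_k(x_k)$; under the usual oracle-use convention that membership in $\mathcal{K}^{X,-}_k(x_k)$ is witnessed by initial segments of sufficient length, this suffices.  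The one genuinely nontrivial point in the recursion is arranging a domain-extension step without disturbing either the blocking width of $\Xi$ or the $\mathrm{DNR}^X$ property of $f$, and this is exactly what Lemmas \ref{thm:dnr_extension} and \ref{lemma.branch_gives_DNR} have been designed to provide; everything else is a routine bookkeeping interleaving.
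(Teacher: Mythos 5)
Your proof is correct and follows essentially the same route as the paper: iterate Lemma \ref{lemma-dDNR-iteration-extending-f} along a descending chain of conditions and take the union. The only difference is that you explicitly interleave domain-extension stages (via Lemmas \ref{thm:dnr_extension} and \ref{lemma.branch_gives_DNR}) to guarantee $f^\infty$ is total, a point the paper's one-line proof leaves implicit; this is a welcome bit of extra care, not a divergence in method.
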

\begin{proof}
  Enumerate the uniformly dense requirements $\mathcal{K}^X_0,\mathcal{K}^X_1,\ldots$.  Construct a sequence of conditions $(f^0,n^0_\Xi,\Xi^0)\preceq(f^1,n^1_\Xi,\Xi^1)\preceq\cdots$ by repeatedly applying the previous lemma so that for each $k$, there is an $x$ such that either $f^k\in\mathcal{K}^{X,+}_k(x)$ or $\Xi^k\cap\mathcal{K}^{X,-}_k(x)=\emptyset$.  Then letting $f^\infty=\bigcup_k f^k$, $f^\infty$ is $\mathrm{DNR}^X$ by the definition of $\mathbb{P}^X$ and settles each $\mathcal{K}^X_k$.
\end{proof}

In summary, we have seen that settling each $\D^X$ ensures we avoid solving $\dDNR^X$, and that all $\K^X$ being uniformly dense will allow us to settle each $\K^X$.
In the ground forcing construction, we will build $V^\infty$ so that all the requirements $\K^{V^\infty}$ are uniformly dense.

We next define requirements which, when satisfied, will ensure that requirements will remain uniformly dense with respect to $X\oplus f^\infty$.
This guarantees that the construction can be iterated to form a Turing ideal.

\begin{definition}
\label{defn.dDNR-bootstraping-req}
  Let $e_0,e_1$ be indices and let $q=(g,m_\Pi,\Pi)$ be a triple.  Then $\mathcal{R}_{e_0,e_1,q}^{X,-}(x)$ is the set of $f\in 2^{<\omega}$ such that either:
  \begin{itemize}
  \item $f$ forces that $\mathcal{K}^{X\oplus f}_{e_0,e_1}$ is not a requirement 
			$$\text{i.e. }(\exists x,y)[\Phi^X_{e_0}(x,y,f)\land(\forall i\leq \hat d(x))\neg \Phi^X_{e_1}(x,y,i,f)]\text{, or}$$
  \item $f$ forces that $q$ is not a condition, or
  \item There is a $\hat d(x)(2m_\Pi-1)+1$-branching set of extensions of $g$ contained in $\mathcal{K}_{e_0}^{X\oplus f,-}(x)$.
  \end{itemize}

 $\mathcal{R}_{e_0,e_1,q}^{X,+}(x,i)$ is the set of $f$ such that either:
  \begin{itemize}
  \item $f$ forces that $\mathcal{K}^{X\oplus f}_{e_0,e_1}$ is not a requirement, or
  \item $f$ forces that $q$ is not a condition, or
  \item There is a $2m_\Pi$-branching set of extensions of $g$ contained in $$\mathcal{K}_{e_1}^{X\oplus f,+}(x,i).$$
  \end{itemize}

Finally, we define \par
$\mathcal{R}_{e_0,e_1,q}^{X,+}(x)=\begin{cases}
			\mathcal{R}_{e_0,e_1,q}^{X,+}(x,i)&\text{if }V_0(x)=i<\hat d(x)\\
			\mathcal{R}_{e_0,e_1,q}^{X,+}(x,\hat d(x))&\text{if }V_0(x)\geq \hat d(x)\text{ or }V_0(x)\uparrow
			\end{cases}$
\end{definition}

Note that the definition of $\R^{X,-}$ references the amount of branching used in the one-step construction.
In the one-step construction, we worked inside a $\hat d(x)(2n_\Xi -1) +1$-branching set to obtain $2n_\Xi$ branching extensions which forced some computational outcome. 
Now, we will use the same combinatorial property to a more abstract version of the same idea: that if $f\in\R^{X,-}(x)$, then $f\in\R^{X,+}(x,i)$ for some $i$.
That is, it shows that the $\R^{X}$ are requirements. 

\begin{lemma}\label{lemma.dDNR-R-is-req}
  $\mathcal{R}_{e_0,e_1,q}^X$ is a requirement.
\end{lemma}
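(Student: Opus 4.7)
The plan is to verify the two clauses of Definition \ref{defn.iteration-conditions}: that $\R^{X,-}$ and $\R^{X,+}$ arise from $X$-computable underlying relations $R^{X,-}, R^{X,+}$, and that for every $(x, y, f)$ with $R^{X,-}(x, y, f)$ there is some $i \leq \hat{d}(x)$ with $R^{X,+}(x, i, y, f)$, for the \emph{same} witness $y$.

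For the $\Sigma^X_1$-representation, I would take the witness $y$ for $R^{X,-}$ to be a tuple recording which of the three bullets of Definition \ref{defn.dDNR-bootstraping-req} is being used together with the bullet-specific data: for bullet 1, a pair $(x', y')$ with $\Phi^X_{e_0}(x', y', f)$ converging and no $i' \leq \hat{d}(x')$ satisfying $\Phi^X_{e_1}(x', y', i', f)$ (a bounded-$\forall$, hence $X$-computable check); for bullet 2, the $\Sigma^0_1$-witness that $q$ fails the $\Pi^0_1$-property of being a condition; for bullet 3, the branching set $U$ together with sub-witnesses $y_h$ certifying $R^{X \oplus f, -}_{e_0}(x, y_h, h)$ for each $h \in U$. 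Each component is $X$-computable to verify, and the same format of $y$ suits $R^{X,+}$ with the obvious substitution in the third bullet.

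For the implication I would split by which bullet $y$ witnesses. Bullets 1 and 2 appear identically on both sides, so the same $y$ trivially witnesses $R^{X,+}(x, i, y, f)$ for any $i$. The substantive case is bullet 3, where we may assume bullets 1 and 2 do not hold with this $y$; in particular $\K^{X \oplus f}_{e_0, e_1}$ really is a requirement, so its defining implication applied to each sub-witness $y_h$ yields a color $i_h \leq \hat{d}(x)$ with $R^{X \oplus f, +}_{e_1}(x, i_h, y_h, h)$---note that the same $y_h$ serves both the minus- and plus-sides of $\K$. These colors partition the $\hat{d}(x)(2m_\Pi - 1) + 1$-branching set $U$ into boundedly many classes, so Lemma \ref{lemma.dDNR-iteratedpigeon} supplies a $2m_\Pi$-branching monochromatic subset $U' \subseteq U$ of some color $i^*$; together with the inherited sub-witnesses $\{y_h\}_{h \in U'}$ this verifies bullet 3 of $R^{X,+}(x, i^*, y, f)$ with the very same $y$.

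The main obstacle I foresee is ensuring the shared-witness condition of Definition \ref{defn.iteration-conditions} actually holds. In bullets 1 and 2 this is immediate, but in bullet 3 the pigeonhole step passes from $U$ to a subset $U'$ and introduces a specific color $i^*$, neither of which is literally part of $y$. The resolution is that the requirement property of $\K^{X \oplus f}_{e_0, e_1}$ guarantees each sub-witness $y_h$ simultaneously certifies membership in both $\K^{X \oplus f, -}_{e_0}(x)$ and $\K^{X \oplus f, +}_{e_1}(x, i_h)$, so $U'$ and $i^*$ are $X$-computable functions of $(x, y, f)$ and need not be explicitly added to $y$; thus the single $y$ chosen at the start of the argument already encodes everything needed on both sides.
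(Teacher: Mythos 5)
Your proposal is correct and follows essentially the same route as the paper's proof: bullets 1 and 2 transfer trivially, and in bullet 3 the failure of bullet 1 guarantees that each sub-witness in the branching set carries a color $i_h$, after which Lemma \ref{lemma.dDNR-iteratedpigeon} extracts the $2m_\Pi$-branching monochromatic subset witnessing $\mathcal{R}^{X,+}(x,i^*)$ with the same witness $y$. Your extra care about encoding the $\Sigma^X_1$ witness and about why $U'$ and $i^*$ need not be part of $y$ is a more explicit account of what the paper compresses into ``with the same existential witness $y$.''
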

\begin{proof}
Clearly $\mathcal{R}_{e_0,e_1,q}^X$ consists of a pair of $\Sigma^0_1$ families of strings.  
We must show that if $\Phi^X_{e_0,q}(x,y,f)$ is true, then there is some $i<\hat d(x)$ such that $\Phi^X_{e_1,q}(x,y,i,f)$ is true.

Suppose $f\in\mathcal{R}_{e_0,e_1,q}^{X,-}(x)$ holds with existential witness $y$.  
We must show that there is some $i<\hat d(x)$ such that $f\in\mathcal{R}_{e_0,e_1,q}^{X,+}(x,i)$ with the same existential witness $y$.
If either of the first two cases hold, we trivially have $f\in\mathcal{R}_{e_0,e_1,q}^{X,+}(x,i)$ for all $i$ as well, and we are done.

Suppose $f$ is not in the first two cases, so there is a $\hat d(x)(2m_\Pi-1)+1$-branching set of extensions of $g$ contained in $\mathcal{K}_{e_0}^{X\oplus f,-}(x)$.  Since we are not in the first case, $f$ forces that $\mathcal{K}^{X\oplus f,-}_{e_0}(x)\subseteq\bigcup_{i< \hat d(x)}\mathcal{K}^{X\oplus f,+}_{e_1}(x,i)$ (with the same existential witness $y$).  Therefore, by Lemma \ref{lemma.dDNR-iteratedpigeon}, there is an $i$ and a $2m_\Pi$-branching set of extensions of $g$ contained in $\mathcal{K}^{X\oplus f,+}_{e_1}(x,i)$.
Consequently, $f\in\bigcup_{i<\hat d(x)}\mathcal{K}^{X\oplus f,+}(x,i)$.
\end{proof}

\begin{lemma}\label{lemma.dDNR-settling-R-preserves-density}
  If $f^\infty$ settles $\mathcal{R}_{e_0,e_1,q}^X$ for every $q$ and if $\mathcal{K}^{X\oplus f^\infty}_{e_0,e_1}$ is a requirement, then $\mathcal{K}^{X\oplus f^\infty}_{e_0,e_1}$ is uniformly dense.
\end{lemma}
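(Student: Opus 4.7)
The plan is to verify uniform density of $\mathcal{K}^{X \oplus f^\infty}_{e_0, e_1}$ directly from the definition. So I would fix an arbitrary condition $q = (g, m_\Pi, \Pi) \in \mathbb{P}^{X \oplus f^\infty}$ below which $\mathcal{K}^{X \oplus f^\infty}_{e_0, e_1}$ is essential, and aim to produce some $x$ together with a $2m_\Pi$-branching set of extensions of $g$ contained in $\mathcal{K}^{X \oplus f^\infty, +}_{e_0, e_1}(x)$. The main tool is the hypothesis that $f^\infty$ settles $\mathcal{R}^X_{e_0, e_1, q}$, which supplies such an $x$ together with one of the two alternatives in the definition of ``settles.''

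Before analyzing the two alternatives, I would dispose of the first two clauses that appear in the definitions of $\mathcal{R}^{X, \pm}_{e_0, e_1, q}$. Since $\mathcal{K}^{X \oplus f^\infty}_{e_0, e_1}$ is a requirement (by hypothesis) and $q$ is a condition for $X \oplus f^\infty$, no $f \subset f^\infty$ can force either ``$\mathcal{K}^{X \oplus f}_{e_0, e_1}$ is not a requirement'' or ``$q$ is not a condition'': both are witnessed by $\Sigma^0_1$ facts that would transfer upward to $f^\infty$, contradicting our hypotheses.

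In the positive alternative, settling gives a finite $f \subset f^\infty$ with $f \in \mathcal{R}^{X, +}_{e_0, e_1, q}(x)$, and the previous paragraph forces the third clause of that definition. This supplies a $2m_\Pi$-branching set of extensions of $g$ contained in $\mathcal{K}^{X \oplus f, +}_{e_1}(x, i)$, with $i$ matched to $V_0(x)$ in precisely the way defining $\mathcal{K}^{X \oplus f^\infty, +}_{e_0, e_1}(x)$. Since $\mathcal{K}^{\cdot, +}_{e_1}(x, i)$ is $\Sigma^0_1$ in the oracle and $f \subset f^\infty$, this same branching set witnesses uniform density at $X \oplus f^\infty$, and we are done.

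The main obstacle, and the step requiring the most care, is ruling out the negative alternative, in which no finite $f \subset f^\infty$ lies in $\mathcal{R}^{X, -}_{e_0, e_1, q}(x)$. Here I would use essentialness at the specific branching level $m = \hat d(x)(2m_\Pi - 1) + 1$ (the level deliberately built into the third clause of $\mathcal{R}^{X, -}$) to extract an $m$-branching set $U$ of extensions of $g$ inside $\Pi \cap \mathcal{K}^{X \oplus f^\infty, -}_{e_0}(x)$. Each member of $U$ carries a $\Sigma^0_1$ witness computed with finite use from $f^\infty$; because $U$ is finite, one can pick a single $f \subset f^\infty$ long enough that every such witness is already validated using oracle $X \oplus f$. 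This $f$ then satisfies the third clause of $\mathcal{R}^{X, -}_{e_0, e_1, q}(x)$, contradicting the assumed alternative. Hence only the positive case occurs, and uniform density is established.
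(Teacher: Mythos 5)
Your proposal is correct and follows essentially the same route as the paper: fix the $x$ from the settling hypothesis, note that neither of the first two clauses of $\mathcal{R}^{X,\pm}_{e_0,e_1,q}$ can occur along $f^\infty$, transfer the $2m_\Pi$-branching witness upward in the positive case, and rule out the negative case by using essentialness at branching level $\hat d(x)(2m_\Pi-1)+1$ together with the existential (finite-use) character of $\mathcal{K}^{X\oplus f^\infty,-}_{e_0}(x)$ to land a finite $f\subset f^\infty$ in $\mathcal{R}^{X,-}_{e_0,e_1,q}(x)$. No gaps.
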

\begin{proof}
  Let $q$ be a condition and suppose $\mathcal{K}^{X\oplus f^\infty}$ is essential below $q=(g,m_\Pi,\Pi)$.  
	We must show that there is a $2m_\Pi$-branching set of extensions $g$ contained in $\K_{e_1}^{X\oplus f,+}(x)$.
	
	Because no $f\subset f^\infty$ can force $q$ to be a non-condition or $\K^{X\oplus f}_{e_0,e_1}$ to be a non-requirement, 
		the definitions of $\R^{X,+}(x,i)$ and $\R^{X,+}(x)$ show that 
		we have the desired $2m_\Pi$-branching set of extensions $g$ which is contained in $\K_{e_1}^{X\oplus f,+}(x)$
		if and only if 
		$f\in \R^{X,+}(x)$. 
	It therefore suffices to find some $f\subset f^\infty$ such that $f\in \R^{X,+}(x)$.
	
	Fix some $x$ witnessing that $f^\infty$ settles $\R^X_{e_0,e_1,q}$.
	Then either there is a finite $f\subset f^\infty$ with $f\in \R^{X,+}(x)$ and we are done, or for every finite $f\subset f^\infty$, $f\notin \R^{X,-}(x)$.
	We will show the first case holds.
	
	By the definition of $\K^{X\oplus f^\infty}_{e_0,e_1}$ being essential below $q$, we know that for every $x$ there is a $\hat d(x)(2m_\Pi-1)+1$-branching set of extensions of $g$ in $\mathcal{K}^{X\oplus f^\infty,-}_{e_0}(x)$.  
	Because $\K$ has an existential definition, there is a finite $f\subset f^\infty$ such that there is a $\hat d(x)(2m_\Pi-1)+1$-branching set of extensions of $g$ in $\mathcal{K}^{X\oplus f,-}_{e_0}(x)$.  
	By the definition of $\R^{X,-}(x)$, we see that $f\in \R^{X,-}(x)$.
\end{proof}

\begin{theorem}\label{thm:dnr_iterate}
  Suppose $V^\infty\leq_T X$ are such that every requirement $\mathcal{K}^X_{e_0,e_1}$ is uniformly dense.  Then there is a $\mathrm{DNR}^X$ function $f^\infty$ such that no $\Phi^{X\oplus f^\infty}_e$ is $d\mhyphen\mathrm{DNR}^{V^\infty}$ and every requirement $\mathcal{K}^{X\oplus f^\infty}_{e_0,e_1}$ is uniformly dense.
\end{theorem}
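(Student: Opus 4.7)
The plan is to invoke Theorem \ref{thm:settling_everything} on a single carefully chosen countable family of uniformly dense requirements that jointly deliver both conclusions. The family consists of all diagonalization requirements $\mathcal{D}_{e,d}^X$, one for each index $e$, together with all bootstrapping requirements $\mathcal{R}_{e_0,e_1,q}^X$, one for each pair of indices $(e_0,e_1)$ and each triple $q=(g,m_\Pi,\Pi)$. Since conditions and potential requirements are indexed by natural numbers, this is a countable collection.

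First, I would verify that each member of this family is a requirement in the sense of Definition \ref{defn.iteration-conditions}: for $\mathcal{D}_{e,d}^X$ this is immediate from its definition, and for $\mathcal{R}_{e_0,e_1,q}^X$ it is exactly Lemma \ref{lemma.dDNR-R-is-req}. By the hypothesis that \emph{every} requirement $\mathcal{K}^X_{e_0,e_1}$ is uniformly dense, it follows that every member of our collection is uniformly dense.

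Next, I would apply Theorem \ref{thm:settling_everything} to obtain a single $\mathrm{DNR}^X$ function $f^\infty$ that settles every member of the chosen family. From here the two conclusions fall out. For the first, because $f^\infty$ settles $\mathcal{D}_{e,d}^X$ for every $e$, the earlier lemma (the one stating that settling each $\mathcal{D}_{e,d}^X$ blocks $X\oplus f^\infty$ from computing any $d\mhyphen\mathrm{DNR}^{V^\infty}$ function) immediately gives that no $\Phi^{X\oplus f^\infty}_e$ is $d\mhyphen\mathrm{DNR}^{V^\infty}$. For the second, fix any indices $e_0,e_1$ and suppose $\mathcal{K}^{X\oplus f^\infty}_{e_0,e_1}$ is a requirement; since $f^\infty$ settles $\mathcal{R}^X_{e_0,e_1,q}$ for every triple $q$, Lemma \ref{lemma.dDNR-settling-R-preserves-density} yields that $\mathcal{K}^{X\oplus f^\infty}_{e_0,e_1}$ is uniformly dense.

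The main obstacle is essentially not in this assembly step but was handled earlier: the combinatorial content lives in the design of the $\mathcal{R}$ requirements and in Lemma \ref{lemma.dDNR-settling-R-preserves-density}. The one subtlety here is that we quantify $\mathcal{R}^X_{e_0,e_1,q}$ over \emph{all} triples $q$ and all pairs $(e_0,e_1)$, even those that fail to correspond to honest conditions or honest requirements on the extended oracle. This is harmless: triples $q$ that are not conditions and pairs $(e_0,e_1)$ that do not define requirements over $X\oplus f^\infty$ are absorbed by the first two clauses of Definition \ref{defn.dDNR-bootstraping-req}, which is precisely the ``pre-condition/pre-requirement'' trick foreshadowed in the discussion after Definition \ref{defn.iteration-conditions}. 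With that observation in hand, the verification is purely bookkeeping and the theorem follows.
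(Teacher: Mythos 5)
Your proposal is correct and is exactly the argument the paper intends (the paper leaves the proof of Theorem \ref{thm:dnr_iterate} implicit): apply Theorem \ref{thm:settling_everything} to the countable family consisting of all $\mathcal{D}_{e,d}^X$ and all $\mathcal{R}_{e_0,e_1,q}^X$, which are uniformly dense requirements by the hypothesis together with Lemma \ref{lemma.dDNR-R-is-req}, and then read off the two conclusions from the lemma on settling the $\mathcal{D}_{e,d}^X$ and from Lemma \ref{lemma.dDNR-settling-R-preserves-density}. Your remark that ill-formed $q$ and non-requirement index pairs are absorbed by the first two clauses of Definition \ref{defn.dDNR-bootstraping-req} is the right way to handle the only subtlety.
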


\subsection{Ground Forcing}\label{sec:dDNR-Ground}

Fix a computable function $d$.  We now construct a set which can serve as the initial $X=V^\infty$ in the application of the forcing above.  The main goal is to ensure that all requirements $\K$ are uniformly dense.  More intuitively, our goal is to construct $V^\infty$ in such a way as to obstruct the construction of a $d\mhyphen\mathrm{DNR}^{V^\infty}$ set.  

As before, we do this by making $V^\infty$ a partial function mapping values $x$ to pairs $(i,n)$ with $i<\hat d(x)$.  
Let $V_0^\infty$ be the partial $V^\infty$ computable function given by $V_0^\infty(x)\downarrow=i$ iff $(\exists n)[V^\infty(x)=(i,n)]$.
We wish to force that any $d\mhyphen\mathrm{DNR}^{V^\infty}$ function $F$ would have to have the property that $F(x)\neq V_0^\infty(x)$ whenever $V_0^\infty(x)$ is defined. 
Unfortunately, we will want to reference computations from certain requirements $\K^{V^\infty}_{e_0,e_1}$ when selecting the value of $V_0^\infty$.
To avoid circularity, we will define $V^\infty(x)=(i,n)$, where $n$ is larger than the use of any computation performed so far, to ensure that our definition of $V_0^\infty$ does not change the relevant computations involving $\K^{V^\infty}_{e_0,e_1}$.

We build $V^\infty$ by forcing with approximations, which are tuples $(m,V,R)$ such that:
\begin{itemize}
\item $V$ is a partial function on $[0,m]$ such that when $x\in\dom(V)$, $V(x)=(i,n)$ for some $i<\hat d(x)$ and $n\leq m$.
\item $R\subseteq[0,m]$,
\item $R\cap\dom(V)=\emptyset$.
\end{itemize}
Given $V$, we write $V_0$ and $V_1$ for the projection functions, so $\dom(V_0)=\dom(V_1)=\dom(V)$ and $V(x)=(V_0(x),V_1(x))$.  We say $(m',V',R')\preceq(m,V,R)$ if $m'\geq m$, $V\subseteq V'$, $x\in\dom(V')\setminus\dom(V)$ implies $V_1(x)>m$, and $R\subseteq R'$.  We write $|V|$ for the smallest $m$ such that $\dom(V)\subseteq[0,m]$ and for each $x\in\dom(V)$, $V_1(x)\leq m$.  We write $V'\preceq(m,V,R)$ if $(|V'|,V,R)\preceq(m,V,R)$.

\begin{lemma}\label{lemma.dDNR-ground-iteration}
For any $(m,V,R)$, any $e_0,e_1$, and any $p=(f,n_\Xi,\Xi)$, there is an $(m',V',R')\preceq(m,V,R)$ such that either:
\begin{itemize}
\item $(m',V',R')$ forces that $\mathcal{K}^{V^\infty}_{e_0,e_1}$ is not a requirement,
				$$\text{i.e. }(\exists x,y)[\Phi^{V^\infty}_{e_0}(x,y,f)\land(\forall i<\hat d(x))\neg \Phi^{V^\infty}_{e_1}(x,y,i,f)]\text{, or}$$
\item $(m',V',R')$ forces that $p$ is not a condition, or
\item $(m',V',R')$ forces that $\mathcal{K}^{V^\infty}_{e_0,e_1}$ is uniformly dense below $p$.
\end{itemize}
\end{lemma}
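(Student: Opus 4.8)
The plan is to mirror the one-step argument of Lemma \ref{lemma.dDNR-one-step}, isolating from it exactly the manipulations of $V$ and weakening the conclusion from ``selecting a string $g$'' to ``forcing that $\K^{V^\infty}_{e_0,e_1}$ is uniformly dense below $p$''. Fix $(m,V,R)$, indices $e_0,e_1$, and $p=(f,n_\Xi,\Xi)$. We first dispose of the ``$p$ is not a condition'' clause: if $\Xi$ does not block at width $n_\Xi$ this already holds and is forced by every extension, and if at any later point we pass to an extension $(m'',V'',R'')$ for which $f$ is not $\mathrm{DNR}^{V''}$, we stop and output it. So we may assume we are trying to force either the first or the third clause.

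Choose a fresh $x\notin R\cup\dom(V)$ and ask the one-step question: is there a $V'\preceq(m,V,R\cup\{x\})$ together with a $\hat d(x)(2n_\Xi-1)+1$-branching set $U$ of extensions of $f$ such that every $g\in U$ lies in $\K^{V',-}(x)$, as witnessed by a convergent computation $\Phi^{V'}_{e_0}(x,y_g,g)$? If the answer is no, adjoin $x$ to $R$ and extend to $(\max(m,x),V,R\cup\{x\})$. For every $V^\infty$ below this condition we then have $V_0^\infty(x)\uparrow$, and there is no $\hat d(x)(2n_\Xi-1)+1$-branching set of extensions of $f$ inside $\K^{V^\infty,-}(x)$: such a set, with its finitely many witnessing computations, would reflect to a finite restriction of $V^\infty$ which, since $x$ stays undefined, is a $V'\preceq(m,V,R\cup\{x\})$ of exactly the forbidden form. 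Hence $\K^{V^\infty}_{e_0,e_1}$ is not essential below $p$, so it is vacuously uniformly dense below $p$ --- the third clause.

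If the answer is yes, fix such $V'$ and $U$; this is the step parallel to the coloring-and-pigeonhole of Lemma \ref{lemma.dDNR-one-step}, the twist being that the ``colors'' are now only $\Sigma^0_1$ facts that must be forced. For each $g\in U$, the extensions $(m'',V'',R'')$ of the current condition that keep $x$ undefined and satisfy $g\in\K^{V'',+}(x,i)$ for some $i<\hat d(x)$ form a dense set: were there a condition $q$ with no such extension, then, since $g\in\K^{q,-}(x)$ and no extension can make any $\Phi_{e_1}(x,y,i,g)$ with $i<\hat d(x)$ converge, $q$ would force that $\K^{V^\infty}_{e_0,e_1}$ is not a requirement (with $g$, $x$, $y_g$ as witnesses), i.e.\ $q$ forces the first clause, and we output $q$. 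Here one uses --- as in the ``avoiding dead ends'' bookkeeping of Lemma \ref{lemma.dDNR-one-step} --- that these positive witnesses may be taken not to query the slot $x$, so they are unharmed once $V_0^\infty(x)$ is later fixed. Meeting these finitely many dense sets in turn yields $V^*\preceq(m,V,R\cup\{x\})$ with a color $i_g<\hat d(x)$ for each $g\in U$; Lemma \ref{lemma.dDNR-iteratedpigeon} then gives one $i<\hat d(x)$ and a $2n_\Xi$-branching $U^*\subseteq U$ with $U^*\subseteq\K^{V^*,+}(x,i)$. Setting $V^\infty(x)=(i,n)$ for a sufficiently large $n$ and extending to a condition whose bound passes every value used injures no previously performed computation (the second coordinate exceeds $m$), so $U^*\subseteq\K^{V^\infty,+}(x,i)$ below it; as $V_0^\infty(x)=i<\hat d(x)$, we get $\K^{V^\infty,+}(x)=\K^{V^\infty,+}(x,i)\supseteq U^*$. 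Hence a $2n_\Xi$-branching set of extensions of $f$ inside $\K^{V^\infty,+}(x)$ exists unconditionally, and the third clause is forced.

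The step I expect to be the main obstacle is this ``yes'' case. Unlike in Lemma \ref{lemma.dDNR-one-step}, where the relevant computation value is read directly off $V'$, here membership in $\K^{+}(x,i)$ is only a $\Sigma^0_1$ event, so the coloring of $U$ must be forced; the argument that the relevant sets of conditions are dense is exactly where the prior exclusion of the ``not a requirement'' clause is invoked, and one must route the forcing through conditions that leave $x$ undefined so that the positive witnesses never interrogate the slot whose value we are about to fix. The remaining points --- preservation of convergent computations under $\preceq$, the termination of the finitely many refinements of $V$ (using, as in Lemma \ref{lemma.dDNR-one-step}, that the branching of $U$ and the blocking of $\Xi$ stay fixed), and the pigeonhole combinatorics of Lemma \ref{lemma.dDNR-iteratedpigeon} --- are routine.
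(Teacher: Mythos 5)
Your proof follows the same basic route as the paper's: pick a fresh position $x$, ask whether some $V'\preceq(m,V,R\cup\{x\})$ admits a $\hat d(x)(2n_\Xi-1)+1$-branching set $U$ of extensions of $f$ inside $\K^{V',-}(x)$; if not, force ``not essential''; if so, color $U$, pigeonhole, and commit $V_0(x)$ to the majority color with a large second coordinate. You are more explicit than the paper at the step where one passes from ``$g\in\K^{V',-}(x)$'' to ``$g\in\K^{V^*,+}(x,i_g)$ for some $i_g$'': the paper compresses this into ``by the definition of [requirement]'', while you spell it out as a density argument (for each $g$, the extensions witnessing some positive box are dense, else you force the first clause), and meet finitely many such dense sets. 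That unpacking is correct and a genuine improvement in rigor, since the first $V'$ found need not yet witness any $R^{+}$-computation.

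There is, however, one case you omit. You restrict to colors $i<\hat d(x)$ throughout --- in the dense-set claim, the pigeonhole, and the commitment $V^\infty(x)=(i,n)$. The lemma's displayed formalization of ``not a requirement'' indeed reads $(\forall i<\hat d(x))$, but this looks like a typo: the definition of a requirement in Definition~\ref{defn.iteration-conditions} requires $i\leq\hat d(x)$, as does the parallel Definition~\ref{defn.dDNR-bootstraping-req}, and the $i=\hat d(x)$ box is a real one (it encodes ``the computed value exceeds $\hat d(x)$''). Under the $\leq$ reading, your density claim fails exactly when the only available color for some $g$ is $\hat d(x)$: then $\K^{V^\infty}_{e_0,e_1}$ \emph{is} still a requirement, so ``not a requirement'' is not forced, yet no $i<\hat d(x)$ ever appears. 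The paper handles this case by not defining $V(x)$ at all (put $x$ into $R$ permanently); then $V_0^\infty(x)\uparrow$ and the definition of $\K^{X,+}(x)$ gives $\K^{X,+}(x)=\K^{X,+}(x,\hat d(x))\supseteq U^*$, which still yields uniform density. You should add this branch. (Incidentally the paper's own text in this spot tries to set $V(m+1)=(\hat d(m+1),\cdot)$, which violates the constraint $i<\hat d(x)$ on conditions, and the branching parameter $\hat d(x)(2n_\Xi-1)+1$ is one box short for $\hat d(x)+1$ colors --- those are bugs in the paper, not in you.)

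A minor point of phrasing: ``these positive witnesses may be taken not to query the slot $x$'' is not quite the right explanation for why the later commitment is harmless. The relevant computations may perfectly well query position $x$; what saves you is the encoding $V(x)=(i,n)$ with second coordinate $n>m$ exceeding all uses accumulated so far, so the new datum sits past every use already examined. The conclusion you draw is right; the justification should cite the encoding, not a non-querying property.
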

\begin{proof}
If any $(m',V',R')\preceq(m,V,R)$ forces either of the first two cases, we are finished, so suppose not.  We ask:
\begin{quote}
Is there any $V'\preceq(m+1,V,R\cup\{m+1\})$ such that there is a $\hat d(x)(2n_\Xi-1)+1$-branching set of extensions $U$ of $f$ in $\mathcal{K}^{V',-}_{e_0,e_1}(m+1)$?
\end{quote}

Suppose not.  Then $(m+1,V,R\cup\{m+1\})$ forces that $\mathcal{K}^{V^\infty}$ is not essential below $p$.  

Otherwise, we may find an appropriate $V'\preceq(m+1,V,R\cup\{m+1\})$ and a set of extensions $U\subset \K^{V',-}_{e_1,e_2}(m+1)$.  
We must show that there is some $x$ and some $2n_\Xi$-branching set $U'$ of extensions of $f$ which is contained in $\K_{e_0,e_1}^{X,+}(x)$. 
Note that (by the definition of condition) for each $g\in U$, there is an $i\leq \hat d(x)$ with $g\in\mathcal{K}_{e_0,e_1}^{V',+}(m+1,i)$.  
By Lemma \ref{lemma.dDNR-iteratedpigeon}, for some $i$ there is a $2n_\Xi$-branching $U_i\subseteq U\cap\mathcal{K}^{V',+}(m+1,i)$.

We now extend to the condition $(|V'|+1,V'\cup\{(m+1,(i,|V'|+1))\},R)$. 
Notice this forces that $V_0(m+1)=i$. 
If $i<\hat d(m+1)$, then the definition of $\K^{X,+}$ gives $\K_{e_0,e_1}^{X,+}(m+1)= \mathcal{K}^{V',+}(m+1,i)$.  
If $i=\hat d(m+1)$, then trivially $V_0(m+1)\geq \hat d(m+1)$, and so $\K_{e_0,e_1}^{X,+}(m+1)= \mathcal{K}^{V',+}(m+1,\hat d(m+1))$.  
Either way, setting $U'=U_i\subset \mathcal{K}^{V',+}(m+1,i)=\K_{e_0,e_1}^{X,+}(m+1)$ we see that the uniform density condition holds with respect to the current oracle $V'$.

We wish to show that all extensions of the condition  $(|V'|+1,V'\cup\{(m+1,(i,|V'|+1))\},R)$ continue to satisfy the uniform density condition for $\mathcal{K}^{V^\infty}_{e_0,e_1}$ below the condition $p$.
To do this, we must show that extending to future conditions will not change any of the computations used to witness the uniform density condition.  
But recall that all computations used to find the $2n_\Xi$-branching $U'\subseteq U\cap\mathcal{K}^{V',+}(m+1,i)$ have had use at most $|V'|$.  
By choosing $m'=|V'|+1$ and of $V_1^\infty=|V'|+1$, we respect restraints from previous stages by defining $V(m+1)=(i,|V'|+1)$ (i.e.\ setting $V_0^\infty(m+1)=i$ at stage $|V'|+1$), and we move the restraint to preserve $V^\infty$ below $|V'|+1$.

In other words, $U'$ continues to witness the uniform density condition in every extension of $V''$ of $(|V'|+1,V'\cup\{(m+1,(i,|V'|+1))\},R)$. 
\end{proof}

\begin{theorem}
  For any computable $d$, there is a Turing ideal $\mathcal{I}$ satisfying $\DNR$ but not $d\mhyphen\DNR$.
\end{theorem}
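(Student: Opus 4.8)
The plan is to assemble the theorem from the ground forcing of Lemma~\ref{lemma.dDNR-ground-iteration} and the iteration step of Theorem~\ref{thm:dnr_iterate}. First I would run the ground forcing to produce the base set. Build a $\preceq$-descending sequence of approximations $(m^0,V^0,R^0)\succeq(m^1,V^1,R^1)\succeq\cdots$ which, for every triple $(e_0,e_1,p)$ with $p=(f,n_\Xi,\Xi)$, meets the three-way dichotomy of Lemma~\ref{lemma.dDNR-ground-iteration}, and set $V^\infty=\bigcup_n V^n$. Now fix any pair $(e_0,e_1)$ for which $\mathcal{K}^{V^\infty}_{e_0,e_1}$ is genuinely a requirement. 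Then no approximation in the sequence forced ``$\mathcal{K}^{V^\infty}_{e_0,e_1}$ is not a requirement'', so for each condition $p$ some approximation either forced that $p$ is not a condition or forced that $\mathcal{K}^{V^\infty}_{e_0,e_1}$ is uniformly dense below $p$. Since uniform density imposes no demand below a non-condition, this shows $\mathcal{K}^{V^\infty}_{e_0,e_1}$ is uniformly dense; and for pairs $(e_0,e_1)$ not defining a requirement there is nothing to check. Hence every requirement $\mathcal{K}^{V^\infty}_{e_0,e_1}$ is uniformly dense.

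Second, I would iterate Theorem~\ref{thm:dnr_iterate} to grow the ideal. Put $X_0=V^\infty$; by the previous paragraph $V^\infty\leq_T X_0$ and every requirement $\mathcal{K}^{X_0}_{e_0,e_1}$ is uniformly dense, so these two properties will serve as the inductive invariant. Given $X_n$ with $V^\infty\leq_T X_n$ and every $\mathcal{K}^{X_n}_{e_0,e_1}$ uniformly dense, apply Theorem~\ref{thm:dnr_iterate} to obtain a $\mathrm{DNR}^{X_n}$ function $f^\infty_n$ such that no $\Phi^{X_n\oplus f^\infty_n}_e$ is $d\mhyphen\mathrm{DNR}^{V^\infty}$ and every $\mathcal{K}^{X_n\oplus f^\infty_n}_{e_0,e_1}$ is uniformly dense; set $X_{n+1}=X_n\oplus f^\infty_n$, so $V^\infty\leq_T X_n\leq_T X_{n+1}$ and the invariant persists. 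Then $\mathcal{I}=\{\,Y\mid \exists n\ Y\leq_T X_n\,\}$ is a Turing ideal: downward closure under $\leq_T$ is immediate, and closure under $\oplus$ follows because the $X_n$ are linearly ordered and cofinal in $\mathcal{I}$.

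Third, I would check that $\mathcal{I}\models\DNR$ while $\mathcal{I}\not\models d\mhyphen\DNR$. For $\DNR$: given $X\in\mathcal{I}$ pick $n$ with $X\leq_T X_n$; the standard relativization of diagonal non-recursiveness (via the $s$-$m$-$n$ theorem) converts the $\mathrm{DNR}^{X_n}$ function $f^\infty_n\leq_T X_{n+1}$ into a $\mathrm{DNR}^X$ function computable from $f^\infty_n$, hence lying in $\mathcal{I}$ by downward closure. For the failure of $d\mhyphen\DNR$, the witnessing instance is $V^\infty\in\mathcal{I}$: any $Y\in\mathcal{I}$ satisfies $Y\leq_T X_n$ for some $n$, and then $Y\leq_T X_{n+1}$, so every $Y$-computable function equals $\Phi^{X_{n+1}}_e$ for some $e$; by Theorem~\ref{thm:dnr_iterate} applied at stage $n$ no such function is $d\mhyphen\mathrm{DNR}^{V^\infty}$. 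Therefore no member of $\mathcal{I}$ computes a $d\mhyphen\mathrm{DNR}^{V^\infty}$ function, so $\mathcal{I}$ refutes $d\mhyphen\DNR$.

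Since all the real combinatorial content already resides in Lemma~\ref{lemma.dDNR-ground-iteration} and Theorem~\ref{thm:dnr_iterate}, the remaining work is organizational. The main point to be careful about is that ``$V^\infty$ stays the single witness'' survives every round of the iteration: this is exactly the clause ``no $\Phi^{X\oplus f^\infty}_e$ is $d\mhyphen\mathrm{DNR}^{V^\infty}$'' of Theorem~\ref{thm:dnr_iterate}, and it is legitimate to keep reusing that theorem only because the invariant $V^\infty\leq_T X$ is maintained. The only other delicate spot is the bookkeeping in the ground forcing, where one must meet \emph{all} triples $(e_0,e_1,p)$ while ensuring the forced-non-condition and forced-non-requirement outcomes do not damage the conclusion; as noted above this is handled by the fact that uniform density says nothing below a non-condition and is vacuous for a non-requirement. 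I do not expect any genuinely hard step here.
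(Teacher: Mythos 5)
Your proof is correct and takes essentially the same route as the paper: ground forcing via Lemma~\ref{lemma.dDNR-ground-iteration} to make every requirement $\mathcal{K}^{V^\infty}$ uniformly dense, then repeated application of Theorem~\ref{thm:dnr_iterate} to build an increasing chain $X_0 \leq_T X_1 \leq_T \cdots$ whose downward closure is the ideal, with $V^\infty$ remaining the fixed failing instance of $d\mhyphen\DNR$ throughout. The paper's proof is terse and leaves the bookkeeping implicit; you have filled in the organizational details (why the three-way dichotomy yields uniform density, why the $\mathrm{DNR}^{X_n}$ function supplies a $\mathrm{DNR}^X$ function for arbitrary $X \leq_T X_n$, why linear cofinality gives $\oplus$-closure) correctly and in a way consistent with the paper's intent.
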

\begin{proof}
  Applying the previous lemma repeatedly, we may construct a $V^\infty$ so that every requirement $\mathcal{K}^{V^\infty}$ is uniformly dense.  By Theorem \ref{thm:dnr_iterate} we may find a $\mathrm{DNR}^{V^\infty}$ function $f^\infty$ so that no $\Phi^{V^\infty\oplus f^\infty}_e$ is $d\mhyphen\mathrm{DNR}^{V^\infty}$ and every requirement $\mathcal{K}^{V^\infty\oplus f^\infty}$ is uniformly dense.   Iterating this countably many times and closing under computability, we obtain a Turing ideal $\mathcal{I}$ containing $V^\infty$, containing no $d\mhyphen\mathrm{DNR}^{V^\infty}$ function, and so that for every $W\in\mathcal{I}$, $\mathcal{I}$ contains a $\mathrm{DNR}^W$ function.
\end{proof}

\begin{cor}\ 
  \begin{itemize}
  \item For any computable $d$, $\DNR$ does not imply $d\mhyphen\DNR$,
  \item $\DNR$ does not imply $\WWKL$.
  \end{itemize}
\end{cor}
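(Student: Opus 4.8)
The plan is to assemble the corollary directly from the machinery already built, treating the two bullets in turn. For the first bullet, fix a computable $d$. The final theorem of Section \ref{sec:dDNR-Ground} hands us a Turing ideal $\mathcal{I}$ with the three closure properties: it contains a particular $V^\infty$, it contains no $d\mhyphen\mathrm{DNR}^{V^\infty}$ function, and for every $W\in\mathcal{I}$ there is a $\mathrm{DNR}^W$ function in $\mathcal{I}$. The countable coded $\omega$-model whose second-order part is $\mathcal{I}$ then satisfies $\DNR$ (this is exactly the last closure property, read through the standard translation between ``$\mathcal{I}$ closed under solutions to $\DNR$'' and ``the corresponding $\omega$-model satisfies $\DNR$''), while it fails $d\mhyphen\DNR$: the set $V^\infty\in\mathcal{I}$ is an instance of $d\mhyphen\DNR$ with no solution in $\mathcal{I}$. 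Hence no $\omega$-model of $\RCA_0$ built on $\mathcal{I}$ satisfies $d\mhyphen\DNR$, so $\DNR\not\vdash d\mhyphen\DNR$ over $\RCA_0$.

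For the second bullet, I would recall the fact quoted in Section \ref{sec:wwkl}: by \cite{MR1034562} there is a \emph{fixed} computable $d$ such that $\WWKL$ implies $d\mhyphen\DNR$ over $\RCA_0$. Apply the first bullet to precisely this $d$: we get an $\omega$-model of $\DNR$ that does not satisfy $d\mhyphen\DNR$, hence (contrapositive of the implication just quoted) does not satisfy $\WWKL$. Therefore $\DNR\not\vdash\WWKL$.

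I do not expect a genuine obstacle here — everything needed has been proved. The only point requiring a sentence of care is the passage from ``Turing ideal with the stated closure properties'' to ``$\omega$-model of the relevant principle,'' i.e.\ checking that the usual correspondence applies: a Turing ideal closed downward under $\leq_T$ and under join, and containing a solution to every instance (coded in the ideal) of a $\Pi^1_2$ principle $P$, gives rise to an $\omega$-model of $\RCA_0+P$; and conversely an instance in the ideal with \emph{no} solution in the ideal witnesses the failure of $P$ in that model. This is standard (see \cite{simpson99}), and for $\DNR$, $d\mhyphen\DNR$, and $\WWKL$ the instance–solution formulation is immediate from the definitions in Section 2. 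So the corollary follows by combining the final theorem of this section, the standard $\omega$-model correspondence, and the Kumabe–Lewis-type bound of \cite{MR1034562}.
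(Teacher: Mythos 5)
Your argument is correct and is precisely the intended reading: the paper states the corollary without a separate proof because it is exactly the preceding theorem (a Turing ideal containing $V^\infty$, closed under $\mathrm{DNR}$-solutions, with no $d\mhyphen\mathrm{DNR}^{V^\infty}$ function) read through the standard Turing ideal/$\omega$-model correspondence, combined with the cited fact from \cite{MR1034562} that $\WWKL$ implies $d\mhyphen\DNR$ for a fixed computable $d$. You have simply made explicit the $\omega$-model translation the paper leaves implicit; nothing is missing.
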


\section{\DNR{} does not imply \RKL}\label{sec:rkl}

\subsection{The one step case}
Suppose we wish to construct an infinite $\{0,1\}$-tree $T^\infty$ and a function $f^\infty$ which is $\DNR^{T^\infty}$ so that no $W_e^{f^\infty}$ is a solution to the $\RKL$ instance $T^\infty$.  That is, either $W_e^{f^\infty}$ is finite or there is a level $n$ such that whenever $\sigma\in T^\infty$ with $|\sigma|=n$, there are $x,z\in W_e^{f^\infty}$ with $\sigma(x)=1-\sigma(z)$.

\begin{definition}
  A \emph{finite $\{0,1\}$-tree} is a set $T\subseteq\{0,1\}^{< n}$ such that if $\sigma\in T$ and $\tau\sqsubseteq\sigma$ then $\tau\in T$.  We say $n=|T|$ is the \emph{height} of $T$ if $n$ is the length of the longest sequence in $\sigma$.  
  
If $\sigma$ is a sequence, the \emph{reflection} of $\sigma$ above $x$, written $\sigma_{/x}$, is the unique sequence such that
\[\sigma_{/x}(y)=\left\{\begin{array}{ll}
\sigma(y)&\text{if }y\leq x\text{ and }y<|\sigma|\\
1-\sigma(y)&\text{if }x<y<|\sigma|\\
\end{array}\right.\]

If $T$ is a finite $\{0,1\}$-tree of height $n$ and $S\subseteq[0,n]$, we say $T$ is \emph{symmetric} over $S$ if whenever $\sigma\in T$ and $x\in S$, $\sigma_{/x}\in T$.
\end{definition}
That is, $\sigma_{/x}$ has the same length as $\sigma$, but reverses the value of every element larger than $x$.

We will force with tuples
\[(T,S,f,n_\Xi,\Xi)\]
where
\begin{itemize}
\item $T$ is a finite $\{0,1\}$-tree,
\item $S$ is a finite set such that $T$ is symmetric across $S$,
\item $f\in\omega^{\subset\omega}$ is $\mathrm{DNR}^T$,
\item $\Xi$ is a function so that when $T'$ extends $T$ and is symmetric across $S$, then $\Xi(T')$ is a set of extensions of $f$ intersecting every $n_\Xi$-branching set of extensions of $f$, and
\item If $T''$ extends $T'$ and $T'$ extends $T$ then $\Xi(T'')\subseteq \Xi(T')$.
\end{itemize}
We say $(T',S',f',n'_\Xi,\Xi')\preceq(T,S,f,n_\Xi,\Xi)$ if 
\begin{itemize}
\item $T'$ extends $T$,
\item $S\subseteq S'$,
\item $f',n'_\Xi,\Xi'$ extend $f,n_\Xi,\Xi$ as in the previous section.
\end{itemize}
We write $T'\preceq(T,S)$ if $T'$ extends $T$ and is symmetric across $S$.

Let $(T,S,f,n_\Xi,\Xi)$ be given and suppose we wish to force that $W_e^{T^\infty\oplus f^\infty}$ is not a solution to $T^\infty$.  We proceed in two steps.  First we want to force $W_e^{T^\infty\oplus f^\infty}$ to either enumerate an element greater than any element of $S$ or be finite.  We fix a value $x$ larger than $|T|$ and larger than any value in $S$.  We ask:
\begin{quote}
  Is there some $T'\preceq(T,S\cup\{x\})$ and some $6n_\Xi-2$-branching set of extensions $U$ of $f$ such that for each $g\in U$, $W_e^{T'\oplus g}\cap(x,|T'|]\neq\emptyset$?
\end{quote}
Suppose not.  Then for each $T'\preceq(T,S\cup\{x\})$, let $\Xi'(T')$ be the set of $g\in \Xi(T)$ such that $W_e^{T'\oplus g}\cap(x,|T'|]=\emptyset$.  
By choosing all future $g\in\Xi'(T')$, it follows that $W_e^{T^\infty\oplus f^\infty}\subseteq[0,x]$, and is therefore finite.
We claim that
\[(T,S,f,7n_\Xi-3,\Xi')\preceq(T,S,f,n_\Xi,\Xi).\]
To see that $\Xi'(T')$ is blocking at width $7n_\Xi-3$, recall by Lemma \ref{tree_pigeonhole} that any $6n_\Xi-2+n_\Xi-1$-branching set $U$ of strings either has a $6n_\Xi-2$-branching subset of $\{g: W_e^{T'\oplus g}\cap(x,|T'|]\neq\emptyset\}$ or a $n_\Xi$-branching subset of $\{g : W_e^{T'\oplus g}\cap(x,|T'|]=\emptyset\}$.

So suppose we find $T'$ and $U$.  We pick a $z$ larger than $x$ and larger than $|T'|$ and ask:
\begin{quote}
  Is there some $T''\preceq(T',S\cup\{x,z\})$ and some $4n_\Xi-1$-branching set of extensions $U'\subseteq U$ of $f$ such that for each $g\in U'$, $W_e^{T''\oplus g}\cap (x,z]\neq\emptyset$ and $W_e^{T''\oplus g}\cap(z,|T''|]\neq\emptyset$?
\end{quote}
Suppose not.  
By Lemma \ref{tree_pigeonhole}, because $U$ is $(4n_\Xi-1)+(2n_\Xi)-1$ branching, if it does not (for some $T''$) contain a $4n_\Xi-1$-branching subset of such extensions, then it contains (for that $T''$) a $2n_\Xi$-branching subset of $g$ s.t. either $W_e^{T''\oplus g}\cap (x,z]=\emptyset$ or $W_e^{T''\oplus g}\cap(z,|T''|]=\emptyset$.  
By our choice of $T',U,$ and $z$, $W_e^{T''\oplus g}\cap(x,|T'|]\subseteq W_e^{T''\oplus g}\cap(x,z]$ is non-empty, so we are in fact guaranteed for each $T''$ a $2n_\Xi$-branching set $U^*_{T''}$ such that $W_e^{T''\oplus g}\cap(z,|T''|]=\emptyset$ for each $g\in U^*_{T''}$.
As in Subsection \ref{sec:onestep}, without loss of generality we may assume that if $g\in U\setminus U^*_{T'}	$ and $T''\preceq(T',S\cup\{x,z\})$ then $g\in U\setminus U^*_{T''}$.  (When this is not the case, we extend $T'$ finitely many times to obtain $T'$ for which this is true.)  Then by Lemma \ref{thm:dnr_extension} there is a $g\in U^*_{T'}$ so that $(T',S\cup\{x,z\},g,n_\Xi,\Xi\upharpoonright g)$ is a condition.  Finally, for every $T''\preceq(T',S\cup\{x,z\})$ we set $\Xi'(T'')$ to be those $h\in \Xi(T'')$ such that $W_e^{T''\oplus h}\cap(z,|T''|]=\emptyset$.  
Since $g\in U^*_{T'}=U^*_{T''}$, and since by assumption no $4n_\Xi-1$-branching set $V$ avoids $U^*_{T''}$, each $(4n_\Xi-1)+(n_\Xi)-1$-branching set contains a $n_\Xi$-branching subset of $V\cap U^*_{T''}$ and so $\Xi\upto g$ contains some element of $V$.  In other words, $\Xi'$ blocks at width $(4n_\Xi-1)+(n_\Xi)-1$, so $(T',S\cup\{x,z\},g,5n_\Xi-2,\Xi')$ forces that $W_e^{T^\infty\oplus f^\infty}\subseteq[0,z]$.

So suppose we find $T''$ and $U'$.  
This is the main case, and where symmetry becomes essential.
Consider some $\sigma\in T''$ with $|\sigma|=|T''|$.  
Since we have symmetry in $T''$ over both $x$ and $z$, there are four obvious reflections of $\sigma$ in $T''$---$\sigma$ itself, $\sigma_{/x}$, $\sigma_{/z}$, and $\sigma_{/x/z}=\sigma_{/z/x}$.  
We wish to find a symmetric pair of these four strings witnessing that $W_e^{T''\oplus g}$ is not homogeneous for either of our chosen strings. 

By our choice of $T''$ and $U'$, for each $g\in U'$ we have values $a\in W_e^{T''\oplus g}\cap(x,z]$ and $c\in W_e^{T''\oplus g}\cap(z,|T''|]$.
Because $x<a<z<c$, we either have that $\sigma(a)\neq \sigma(c)$ and $\sigma_{/x}(a)\neq \sigma_{/x}(c)$ or we have that $\sigma(a)=\sigma(c)$ and hence both $\sigma_{/z}(a)\neq \sigma_{/z}(c)$ and $\sigma_{/x/z}(a)\neq\sigma_{/x/z}(c)$.
In either case, we preserve exactly the correct two of these reflections, allowing us to preserve the symmetry over $x$ while satisfying the requirement that $W_e^{T''\oplus g}$ fail to be homogeneous for the chosen reflections.

For the full construction, we will need to express this argument in terms of requirements $\K$.
For each $a\in D_0= (x,z]$ we will define a set $Y_a$ which consists of possible values for $c\in D_1=(z,|T|]$.
Then $\mathcal{K}(\{Y_a\})$ will consist of those $g$ such that for some $c\in W_e^{T''\oplus g}\cap(z,|T''|]$, $c\in Y_a$.
The full requirement (quantifying over all the possible $a$ in $D_0$) will be $\K(\{Y_a\}_{a\in D_0})$.
In other words, the full requirement $\mathcal{K}(\{Y_a\}_{a\in D_0})$ consists of the $g$ such that for some $c\in W_e^{T''\oplus g}\cap(z,|T''|]$ and some $a\in W_e^{T''\oplus g}\cap(x,z]$, $c\in Y_a$.

The next lemma proves that as long as $\K$ satisfies a reasonable property, then we can always find a $2_\Xi$-branching refinement $U^*$ of $U$ so that for any $g^*\in U^*$, then $W_e^{T^*\oplus g^*}$ is not homogeneous for any string of length $\sigma$ in $T^*$.
In the more abstract language of $\K$, this means that $U^*$ must be a subset of $\mathcal{K}(\{c\in D_1\mid \sigma(c)\neq \sigma(a)\}_{a\in D_0})$.

\begin{lemma}\label{thm:sym_breaking}
  Suppose $T$ is symmetric over $S\cup\{x,z\}$ with $s<x<z$ for all $s\in S$.  
	Let $D_0=(x,z]$ and $D_1=(z,|T|]$.  
	Let $U$ be a $4n_\Xi-1$-branching set of extensions.
	Furthermore assume, for each $g\in U$, that if there are two sequences of sets $\{Y_{a,i}\}_{a\in D_0}$ s.t. $Y_{a,0}\cup Y_{a,1}=D_1$ for each $a$, then $g\in\K(\{Y_{a,0}\})$ or $g\in\K(\{Y_{a,1}\})$.  

Then there is a $T^*\preceq(T,S\cup\{x\})$ and a $2n_\Xi$-branching $U^*\subseteq U$ such that for every $\sigma\in T^*$ with $|\sigma|=|T^*|$, $U^*\subseteq\mathcal{K}(\{c\in D_1\mid \sigma(c)\neq \sigma(a)\}_{a\in D_0})$.
\end{lemma}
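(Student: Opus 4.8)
The plan is to reduce the statement to a single binary choice that one application of the branching pigeonhole principle (Lemma~\ref{tree_pigeonhole}) can settle, using the arithmetic $4n_\Xi-1=(2n_\Xi)+(2n_\Xi)-1$. Concretely, I would prune $T$ so that only one orbit of maximal sequences --- under the group generated by the reflections $\tau\mapsto\tau_{/p}$ for $p\in S\cup\{x,z\}$ --- survives (extended by one more level), and then choose between that orbit and its reflection over $z$. For a sequence $\sigma$ with $|\sigma|\geq|T|$ and $a\in D_0$ write $Y^\sigma_a=\{c\in D_1\mid\sigma(c)\neq\sigma(a)\}$; since $D_0\cup D_1\subseteq[0,|T|)$, this ``profile'' depends only on $\sigma\upharpoonright|T|$, and the conclusion of the lemma is exactly that every maximal $\sigma\in T^*$ has $U^*\subseteq\K(\{Y^\sigma_a\}_{a\in D_0})$.

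The first step is to record how reflections act on profiles. Every $p\in S\cup\{x\}$ satisfies $p<x<z$, so it lies strictly below every coordinate of $D_0\cup D_1$; hence $\sigma_{/p}$ flips every coordinate of $D_0\cup D_1$, and $Y^{\sigma_{/p}}_a=Y^\sigma_a$. The point $z$, on the other hand, lies below every coordinate of $D_1$ but below no coordinate of $D_0$, so $\sigma_{/z}$ flips exactly the coordinates of $D_1$, and $Y^{\sigma_{/z}}_a=D_1\setminus Y^\sigma_a$. Thus, fixing any maximal $\sigma_0\in T$, its orbit under the reflections indexed by $S\cup\{x,z\}$ (which lies in $T$ by symmetry of $T$) splits into two orbits $O^+\ni\sigma_0$ and $O^-=\{\rho_{/z}\mid\rho\in O^+\}$ under the reflections indexed by $S\cup\{x\}$: every member of $O^+$ carries the fixed profile $Y_a:=Y^{\sigma_0}_a$, and every member of $O^-$ carries the complementary profile $Y'_a:=D_1\setminus Y_a$.

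Next I would apply the hypothesis. Since $Y_a\cup Y'_a=D_1$ for each $a\in D_0$, it gives, for every $g\in U$, that $g\in\K(\{Y_a\}_{a\in D_0})$ or $g\in\K(\{Y'_a\}_{a\in D_0})$; let $R\subseteq U$ collect those $g$ for which the first alternative holds. Since $U$ is $(4n_\Xi-1)$-branching, i.e.\ $((2n_\Xi)+(2n_\Xi)-1)$-branching, Lemma~\ref{tree_pigeonhole} produces a $2n_\Xi$-branching $U^*$ with $U^*\subseteq R$ or $U^*\subseteq U\setminus R$. In the first case put $O:=O^+$ and $Z_a:=Y_a$; in the second, $O:=O^-$ and $Z_a:=Y'_a$. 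Either way $U^*\subseteq\K(\{Z_a\}_{a\in D_0})$ and every $\rho\in O$ satisfies $Y^\rho_a=Z_a$. I then take $T^*$ to be $T$ together with both one-step extensions of each $\rho\in O$; this is a finite $\{0,1\}$-tree extending $T$, and since $O$ is invariant under the reflections indexed by $S\cup\{x\}$ and each such $p$ is below the newly added coordinate (recall $x<z<|T|$), so that the reflection over $p$ of a one-step extension of $\rho$ is a one-step extension of $\rho_{/p}\in O$, the tree $T^*$ is symmetric over $S\cup\{x\}$, i.e.\ $T^*\preceq(T,S\cup\{x\})$. Its maximal sequences are precisely the one-step extensions of members of $O$; any such $\sigma$ restricts on $[0,|T|)$ to some $\rho\in O$, so $\{c\in D_1\mid\sigma(c)\neq\sigma(a)\}_{a\in D_0}=Y^\rho_a=Z_a$, and therefore $U^*\subseteq\K(\{c\in D_1\mid\sigma(c)\neq\sigma(a)\}_{a\in D_0})$, as required.

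I expect the main obstacle to be the organizing idea rather than any particular calculation: one has to recognize that $T^*$ should retain only a single orbit of maximal sequences (keeping more would force an iterated pigeonhole and overrun the available branching $4n_\Xi-1$), and then carry out the bookkeeping showing that reflections over $S\cup\{x\}$ fix the profile $\{Y^\sigma_a\}$ while reflection over $z$ complements it in $D_1$. That is exactly what permits destroying symmetry over $z$ while preserving it over $S\cup\{x\}$; with it in place, the hypothesis furnishes the dichotomy and Lemma~\ref{tree_pigeonhole} closes the argument.
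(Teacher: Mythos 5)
Your proof is correct and takes essentially the same approach as the paper's. The paper fixes a maximal $\sigma$, sets $\tau_i=\sigma\upharpoonright D_i$, first prunes (by a ``without loss of generality'' one-level extension) so that every maximal $\rho$ has $\rho\upharpoonright D_i\in\{\tau_i,1-\tau_i\}$, defines the same two complementary profiles $Y_{a,0},Y_{a,1}$, gets $U^*$ from the $4n_\Xi-1=2(2n_\Xi-1)+1$ pigeonhole bound, and then constructs $T^*$ by retaining exactly the two configurations matching the chosen $j$. Your orbit description is a slightly cleaner way of packaging that same pruning step --- you make explicit that reflections over $S\cup\{x\}$ fix the profile while reflection over $z$ complements it, and then you keep the one orbit whose profile the pigeonhole picked out --- but the combinatorial content (two profiles, one application of Lemma~\ref{tree_pigeonhole} with $n=m=2n_\Xi$, then ``spend'' the $z$-symmetry) is identical.
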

\begin{proof}
Fix some $\sigma\in T$ with $|\sigma|=|T|$, and let $\tau_i=\sigma\upto D_i$ for $i\in\{0,1\}$.  Without loss of generality, we may assume that for \emph{every} $\rho\in T$ with $|\rho|=|T|$, and for each $i\in\{0,1\}$, $\rho\upharpoonright D_i\in\{\tau_i,1-\tau_i\}$.  (Otherwise, we can extend $|T|$ by one, extending exactly those $\sigma$ satisfying this property.)  By our choice of $\sigma$, we know that there are at least four strings in $T$: $\sigma$, $\sigma_{/x}$, $\sigma_{/z}$, and $\sigma_{/x/z}$.

For each $a\in D_0$, let $Y_{a,0}=\{c\in D_1\mid\tau_1(c)=\tau_0(a)\}$ and $Y_{a,1}=D_1\setminus Y_{a,0}=\{c\in D_1\mid \tau_1(c)\neq \tau_0(a)\}$.  
Clearly $Y_{a,0}\cup Y_{a,1}=D_1$ for each $a$, so by our assumption about $\K$, for each $g\in U$ we have either $g\in \mathcal{K}(\{Y_{a,0}\})$ or $g\in\mathcal{K}(\{Y_{a,1}\})$.
By Lemma \ref{lemma.dDNR-iteratedpigeon}, it follows that there is a $j\in\{0,1\}$ and a $2n_\Xi$-branching $U^*\subseteq U$ with $U^*\subseteq\mathcal{K}(\{Y_{a,j}\})$.  
It remains to extend $T$ to $T^*$ so that if $\sigma^*\in T^*$ then $Y_{a,j}=\{c\in D_1\mid \sigma^*(c)\neq \sigma^*(a)\}$ for each $a$.

If $j=0$, define $T^*\preceq(T,S\cup\{x\})$ so that $|T^*|=|T|+1$ and $\sigma^\frown\langle b\rangle\in T^*$ if $|\sigma|=|T|$, $\sigma\in T$, $b\in\{0,1\}$, and either 
\begin{itemize}
\item $\sigma\upharpoonright D_0=\tau_0$ and $\sigma\upharpoonright D_1=\tau_1$, or
\item $\sigma\upharpoonright D_0=1-\tau_0$ and $\sigma\upharpoonright D_1=1-\tau_1$.
\end{itemize}
In this case, when $\sigma^*\in T^*$, $a\in D_0$, and $c\in D_1$, $\sigma^*(a)=\tau_0(a)$ iff $\sigma^*(c)=\tau_1(c)$, so $\sigma^*(a)\neq\sigma^*(c)$ iff $c\in Y_{a,0}$, so $Y_{a,0}=\{c\in D_1\mid\sigma^*(c)\neq\sigma^*(a)\}$.

If $j=1$ we similarly define $T^*$ so that $\sigma^\frown\langle b\rangle\in T^*$ if $|\sigma|=|T|$, $\sigma\in T$, $b\in\{0,1\}$, and either 
\begin{itemize}
\item $\sigma^*\upharpoonright D_0=\tau_0$ and $\sigma\upharpoonright D_1=1-\tau_1$, or
\item $\sigma^*\upharpoonright D_0=1-\tau_0$ and $\sigma\upharpoonright D_1=\tau_1$.
\end{itemize}
Then if $\sigma^*\in T^*$, $a\in D_0$, and $c\in D_1$, $\sigma^*(a)=\tau_0(a)$ iff $\sigma^*(c)=1-\tau_1(c)$, so $\sigma^*(a)\neq\sigma^*(c)$ iff $c\in Y_{a,1}$, so $Y_{a,1}=\{c\in D_1\mid\sigma^*(c)\neq\sigma^*(a)\}$.
\end{proof}

In the one step case, recall that $\mathcal{K}(\{Y_a\}_{a\in D_0})$ is the set of $g$ such that there is some $c\in W_e^{T''\oplus g}\cap(z,|T''|)$ and some $a\in W_e^{T''\oplus g}\cap(x,z]$ with $c\in Y_a$.
Therefore, $\K$ satisfies the conditions of the lemma because each $c\in Y_{a,0}\cup Y_{a,1}$.

Applying the lemma to $T''$ and $U'$, we obtain $T^*$ and $U^*$.  $U^*$ is $2n_\Xi$-branching, so we may find a $g\in U^*$ so that $(T^*,S,g,n_\Xi,\Xi\upharpoonright g)$ is a condition.  Then for every $\sigma\in T^*$ with $|\sigma|=|T^*|$, there are $a,c\in W_e^{T^*\oplus g}$ so that $\sigma(c)\neq \sigma(a)$, and therefore $W_e^{T^\infty\oplus f^\infty}$ is not a solution to $T^\infty$.

\subsection{Forcing Solutions of \DNR}

We turn to the iterated forcing case for \RKL.  Our setting is that we have constructed a set $T^\infty\leq_T X$ and wish to construct a $\mathrm{DNR}^X$ function which does not compute a solution to $T^\infty$.  

The set of conditions $\mathbb{P}^X$ is the same as in Definition \ref{defn.iteration-conditions} of the previous section: we force with triples $(f,n_\Xi,\Xi)$ where $f$ is $\mathrm{DNR}^X$ and $\Xi$ is a family of extensions of $f$ which blocks at width $n_\Xi$.

Our requirements, however, are different.  
As illustrated in the one-step case, we need to force in two phases in order to accommodate the diagonalization requirement, which requires that we first enumerate an element into $W_e^{T^\infty\oplus f^\infty}$ and then enumerate a second element.

The requirement $\mathcal{K}^{X,-,0}(x)$ corresponds to enumerating the element $x$ into $W_e^{T^\infty\oplus f^\infty}$.
The requirement $\mathcal{K}^{X,-,1}(x,z)$ corresponds to enumerating the element $z$ into $W_e^{T^\infty\oplus f^\infty}$ after already enumerating $x$ into $W_e^{T^\infty\oplus f^\infty}$.

We first give the formal definitions, and then define the more intuitive basic requirement in Example \ref{example.RKL-condition-idea}.

\begin{definition}\label{defn.RKL-Requirement}
We say that three families of sets 
\[\mathcal{K}^{X,-,0}(x)=\{f\mid\exists y \exists f'\subseteq f\ R^{X,-,0}(x,y,f')\},\]
\[\mathcal{K}^{X,-,1}(x,z)=\{f\mid\exists y\exists f'\subseteq f\  R^{X,-,1}(x,z,y,f')\}\text{, and}\]
\[\mathcal{K}^{X,+}(x,z,\{Y_a\}_{a\in(x,z]})=\{f\mid\exists y\exists f'\subseteq f\exists Y'_a\subseteq Y_a\ R^{X,+}(x,z,y,\{Y'_a\},f')\}\]
and a partial computable function $\z^X_{\mathcal{K}}$ define an \emph{RKL-requirement} if the following conditions hold:
\begin{enumerate}
\item $R^{X,-,0},R^{X,-,1},R^{X,+}$ are $X$-computable relations which always terminate in at most $y$ steps,
\item If $f'\subseteq f$ and $\z^X_{\mathcal{K}}(f')\downarrow$ then $\z^X_{\mathcal{K}}(f)=\z^X_{\mathcal{K}}(f')$,
\item If $R^{X,-,0}(x,y,f)$ then $\z^X_{\mathcal{K}}(f)$ converges in $\leq y$ steps,
\item Suppose that $R^{X,-,0}(x,y,f)$, that $z\geq \z^X_{\mathcal{K}}(f)$, that $R^{X,-,1}(x,z,y',f)$, and that for each $a\in(x,z]$, $Y_{a,0}\cup Y_{a,1}$ is a partition of $(z,y')$.  
		\par Then there is a $j\in\{0,1\}$ so that $R^{X,+}(x,z,y',\{Y_{a,j}\},f)$.
\end{enumerate}
\end{definition}
The first three properties guarantee uniformity.  
The last property ensures we will be able to apply Lemma \ref{thm:sym_breaking} to the condition $\K^{X,+}(x,z,\{Y_a\}_{a\in(x,z]})$.

In the simplest case $\z^X_{\K}(f)$ will be a uniformly chosen element of $W^{X\oplus f}_e$.
The assertion ``$z\geq\z^X_{\K}(f)>x$'' is an abstraction of the assertion ``$W^{X\oplus f}_e\cap (x,z]\neq\emptyset$,'' and is more appropriate for use in a general condition.

\begin{definition}
We set $\mathcal{K}^{X,-}(x,z)$ to be those $f\in\mathcal{K}^{X,-,0}(x)$ such that either $z<\z^X_{\mathcal{K}}(f)$ or $f\in\mathcal{K}^{X,-,1}(x,z)$.
\end{definition}

Note that $f\notin\mathcal{K}^{X,-}(x,z)$ implies that either $f\notin \mathcal{K}^{X,-,0}(x)$, or $z\geq \z^X_{\mathcal{K}}(f)$ and $f\notin\mathcal{K}^{X,-,1}(x,z)$.
This is used in the second half of Lemma \ref{lemma.settles-W-does-not-solve-T}, where we examine both cases to show that $f^\infty$ strongly avoiding a certain $\K^{X,-}(x,z)$ (defined in Example \ref{example.RKL-condition-idea}) guarantees that $W_e^{X^\infty\oplus f^\infty}\subset[0,z]$.

Of course, we are most interested in strings in $\K^{X,-}(x,z)$ that are also in $\K^{X,-,1}(x,z)$.  
We formalize this in Definition \ref{defn.RKL-essential-and-dense}, where a requirement is only ``essential'' if a reasonable number of strings can be bootstrapped from $\K^{X,-,0}(x)$ to $\K^{X,-,1}(x,z)$.

\begin{definition}
When the $Y_a$ are (possibly) infinite sets, we write 
	$$\mathcal{K}^{X,+}(x,z,\{Y_a\}_{a\in(x,z]})=\bigcup \mathcal{K}^{X,+}(x,z,\{Y'_a\}_{a\in(x,z]})$$
where the union ranges of choices of finite $Y'_a\subseteq Y_a$.
Given $z$, we define $$Y_{a}(T^\infty)=\{ c>z : \text{ whenever }\Lambda\text{ is an infinite path in }T^\infty\text{, }\Lambda(c)\neq\Lambda(a)\}.$$  

When (and only when) $z\geq \z^X_{\K}(f)$ we set 
$$\mathcal{K}^{X,+}(x,z) = \{f \in \K^{X,-,0}(x) : f\in \K^{X,+}(x,z,\{Y_{a}(T^\infty)\}_{a\in(x,z]})\}$$
\end{definition}

We see in the ground forcing Lemma \ref{lemma.groundforcing} that we can move from not avoiding $\K^{X,-}(x,z)$ to being inside $\K^{X,+}(x,z)$.  
More formally, Lemma \ref{lemma.groundforcing} will show that we can force all requirements to be ``uniformly dense.''
This will be possible because in the ground forcing, we can \emph{choose} $z$ as large as needed.  

\begin{remark}
To understand the use of $\{Y_a\}$ in the definition of $\K^{X,+}(x,z)$, recall that we need the definition of a requirement to be $\Sigma^0_1$, so that we can easily force whether something is a requirement during the ground forcing.  
The definition of $\K^{X,+}(x,z)$ highlights that we will, during the ground forcing, compute $\{Y_{a}(T^\infty)\}_{a\in(x,z]}$ directly, and then provide it via the parameter $\{Y_a\}$ to obtain the correct $\Sigma^0_1$ class defined by $\K^{X,+}(x,z,\{Y_a\})$.
\end{remark}

Given $e_0,e_1,e_2,e_3$, we define $\mathcal{K}^X_{e_0,e_1,e_2,e_3}$ to be the potential requirement with $R^{X,-,0}_{e_0,e_1,e_2,e_3}=\Phi^X_{e_0}$, $R^{X,-,1}=\Phi^{X}_{e_1}$, $R^{X,+}_{e_0,e_1,e_2,e_3}=\Phi^X_{e_2}$, and $\z^X_{e_0,e_1,e_2,e_3}=\Phi^X_{e_3}$.

\begin{example}\label{example.RKL-condition-idea}
The requirement $\mathcal{W}^X_e$ is given by:
\begin{itemize}
\item  $\mathcal{W}^{X,-,0}_{e}(x)=\{f : W^{X\oplus f}_e$ contains an element $>x\}$,
\item $\z^X_{\mathcal{W}_e}(f)$ is the first value $>x$ enumerated into $W^{X\oplus f}_e$,
\item $\mathcal{W}^{X,-,1}_e(x,z)=\{f: \z^X_{\W_e}(f)$ witnesses that $W^{X\oplus f}_e\cap(x,z]\neq\emptyset$ and $W^{X\oplus f}_e$ contains an element $>z\}$, and 
\item $\mathcal{W}^{X,+}(x,z,\{Y_a\})$ consists of those $f$ such that there is some $a\in W^{X\oplus f}_e\cap(x,z]$ such that there is a $c\in W^{X\oplus f}_e\cap Y_a$ with $c>z$.
\end{itemize}

Then $\W^{X,-}(x,z)$ is the set of $f$ such that $W_e^{X\oplus f}$ contains an element greater than $x$ where \emph{either} $z$ is smaller than the first value $>x$ enumerated into $W_e^{X\oplus f}$ \emph{or} $W_e^{X\oplus f}\cap (x,z]\neq \emptyset$ and $W_e^{X\oplus f}$ contains an element $>z$.

In particular, as long as there is some element $>x$ in $W_e^{X\oplus f}$, then $f\notin \W^{X,-}(x,z]$ means that $\z^X_{\W_e}(f)$ is a uniform witnesses that $W_e^{X\oplus f}\cap (x,z]\neq 0$ \emph{and} that $W_e^{X\oplus f}\subseteq [0,z]$. 
	This is the main use of $\z^X_{\W_e}$: it provides an abstract way to say $W_e^{X\oplus f}\cap (x,z]\neq\emptyset$.
	
Finally, $\W^{X,+}(x,z)$ is the set of those $f$ such that there is some $a\in W^{X\oplus f}_e\cap(x,z]$ and some $c\in W^{X\oplus f}_e\cap(z,\infty)$ where $(\forall \Lambda\in[T^\infty])[\Lambda(c)\neq\Lambda(a)\}]$.
In other words, $W_e^{X\oplus f^\infty}$ is not a solution to $T^\infty$ for any $f^\infty\supseteq f$.
Note that in both $\W^{X,+}(x,z,\{Y_a\})$ and $\W^{X,+}(x,z)$, we also require that $W^{X\oplus f}_e\cap(x,z]\neq\emptyset$ be witnessed by observing that $\z^X_{\W_e}(f)\in W^{X\oplus f}_e\cap(x,z]$.
\end{example}

\begin{definition}
  We say $f^\infty:\omega\rightarrow\omega$ \emph{settles} $\mathcal{K}^X$ if there is some pair $(x,z)$ such that either there is a finite $f\subset f^\infty$ with $f\in\mathcal{K}^{X,+}(x,z)$ or for every finite $f\subset f^\infty$, $f\not\in\mathcal{K}^{X,-}(x,z)$.
\end{definition}

The next lemma unpacks and applies the definitions of $\W^{X,-}$ and $\W^{X,+}$.

\begin{lemma}\label{lemma.settles-W-does-not-solve-T}
  Suppose $f^\infty$ settles $\mathcal{W}^X_{e}$ for every $e$.  Then $f^\infty$ does not compute any solution to $T^\infty$.
\end{lemma}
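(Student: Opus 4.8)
The plan is to fix $e$ and show that $W^{X\oplus f^\infty}_e$ is not a solution to $T^\infty$; since any solution computable from $f^\infty$ is, being computable from $X\oplus f^\infty$, equal to $W^{X\oplus f^\infty}_e$ for some $e$, this suffices. So fix $e$ and fix a pair $(x,z)$ witnessing that $f^\infty$ settles $\W^X_e$, and argue by cases on the definition of ``settles'', treating the positive case first.

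\emph{The positive case.} Suppose there is a finite $f_0\subset f^\infty$ with $f_0\in\W^{X,+}(x,z)$. Unwinding the definitions in Example \ref{example.RKL-condition-idea}, this produces $a\in W^{X\oplus f_0}_e\cap(x,z]$ and $c\in W^{X\oplus f_0}_e$ with $c>z$ and $c\in Y_a(T^\infty)$; since $f_0\subseteq f^\infty$ we get $a,c\in W^{X\oplus f^\infty}_e$ with $x<a\leq z<c$, and the definition of $Y_a(T^\infty)$ gives $\Lambda(a)\neq\Lambda(c)$ for every $\Lambda\in[T^\infty]$. Now suppose toward a contradiction that $W:=W^{X\oplus f^\infty}_e$ were a solution, and let $T^\bullet$ be the set of $\tau\in T^\infty$ such that $\tau\upto(W\cap[0,|\tau|))$ is constant. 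Being constant on a subset is inherited by initial segments, so $T^\bullet$ is a tree, and because $W$ is a solution $T^\bullet$ has a node of every length; hence $T^\bullet$ is an infinite binary tree and so has a path $\Lambda\in[T^\bullet]\subseteq[T^\infty]$. Every initial segment of $\Lambda$ is constant on $W$, so $\Lambda$ itself is constant on $W$, and in particular $\Lambda(a)=\Lambda(c)$ — contradicting $\Lambda(a)\neq\Lambda(c)$. Therefore $W$ is not a solution.

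\emph{The negative case.} Suppose instead that $f\notin\W^{X,-}(x,z)$ for every finite $f\subset f^\infty$. Here the goal is to show $W:=W^{X\oplus f^\infty}_e\subseteq[0,z]$, which makes $W$ finite, hence not a solution. If no finite $f\subset f^\infty$ lies in $\W^{X,-,0}(x)$ then $W\subseteq[0,x]$ and we are done; otherwise pick such an $f_1$. By clause (3) of Definition \ref{defn.RKL-Requirement}, $\z^X_{\W_e}$ converges on $f_1$, and by clause (2) together with monotonicity of $W_e$ in the oracle, $\z^X_{\W_e}(f)$ stabilizes, as $f$ grows through $f^\infty$, to the first element $z_0>x$ of $W$. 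For all sufficiently large finite $f\subset f^\infty$ we then have $f\in\W^{X,-,0}(x)$ and $\z^X_{\W_e}(f)=z_0$, so, since $f\notin\W^{X,-}(x,z)$, the definition of $\W^{X,-}(x,z)$ forces $z\geq z_0$ and $f\notin\W^{X,-,1}(x,z)$. But $z_0=\z^X_{\W_e}(f)\in(x,z]$ already witnesses $W^{X\oplus f}_e\cap(x,z]\neq\emptyset$, so the only way to fail $\W^{X,-,1}(x,z)$ is for $W^{X\oplus f}_e$ to contain no element above $z$. Letting $f$ exhaust the finite subfunctions of $f^\infty$ yields $W\subseteq[0,z]$.

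I expect the positive case to be the main obstacle: the dichotomy packaged into $Y_a(T^\infty)$ is phrased in terms of the \emph{infinite paths} of $T^\infty$, whereas ``being a solution'' is a statement about the \emph{finite levels} of $T^\infty$. The König's Lemma step above bridges this by passing to the subtree of nodes that are homogeneous for the candidate solution $W$; the only care needed is in checking that homogeneity on a restriction passes down to initial segments and survives in the limit along a path. The rest — monotonicity of $W_e^{X\oplus f}$ in $f$, stabilization of $\z^X_{\W_e}$, and the fact that finite sets are not solutions — is routine.
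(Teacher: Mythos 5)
Your argument is correct and follows the same case analysis as the paper's proof: the positive case yields $a,c\in W$ with no homogeneous path and the negative case forces $W\subseteq[0,z]$, hence finite. The one place you add genuine detail is the K\"onig's Lemma step in the positive case --- passing from ``no path is homogeneous for $\{a,c\}$'' to ``some level of $T^\infty$ has no node homogeneous for $W$'' via the subtree $T^\bullet$ --- which the paper states without proof but which is exactly the bridge needed between the path-quantified set $Y_a(T^\infty)$ and the level-quantified definition of an $\RKL$ solution.
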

\begin{proof}
  We must show that for each $e$, $W^{X\oplus f^\infty}_e$ is not a solution to $T^\infty$.  Since $f^\infty$ settles $\mathcal{W}^{X}_e$, there must be some $(x,z)$ demonstrating this.  
	
	If there is any $f\subset f^\infty$ with $f\in\mathcal{W}^{X,+}_e(x,z)$ then there is some $a\in W_e^{X\oplus f}\cap(x,z]$ and some $c\in W^{X\oplus f}_e\cap Y_a(T^\infty)$ with $c>z$.  Therefore $\{a,c\}\subseteq W^{X\oplus f^\infty}_e$ but there is no infinite path $\Lambda$ through $T^\infty$ with $\Lambda(a)=\Lambda(c)$, so $W^{X\oplus f^\infty}_e$ does not solve $T^\infty$.

 Therefore, suppose that there is no $f\subset f^\infty$ in $\mathcal{W}^{X,+}_e(x,z)$, and therefore no $f\subset f^\infty$ in $\mathcal{W}^{X,-}_e(x,z)$.  If there is no $f\subset f^\infty$ in $\mathcal{W}^{X,-,0}(x)$ then $W^{X\oplus f}_e\subseteq [0,x]$ for each $f\subset f^\infty$, so $W^{X\oplus f^\infty}_e$ is finite.

Otherwise there is some $f\subset f^\infty$ in $\mathcal{W}^{X,-,0}(x)$.  Then $\z^X_{\mathcal{W}_e}(f)\leq z$ (otherwise we would have $f\in\mathcal{W}^{X,-}_e(x,z)$).  If there were any $f'$ with $f\subseteq f'\subset f^\infty$ such that $W^{X\oplus f'}_e$ contains an element $>z$ then we would have $f'\in\mathcal{W}^{X,-}_e(x,z)$, so there is no such $f'$.  Therefore $W^{X\oplus f^\infty}_e\subseteq [0,z]$, and is therefore finite.
\end{proof}

\begin{definition}\label{defn.RKL-essential-and-dense}
$\mathcal{K}^X$ is \emph{essential} below $(f,n_\Xi,\Xi)$ if for every $x$ both (1) there is a $6n_\Xi-2$-branching set of extensions of $f$ contained in $\mathcal{K}^{X,-,0}(x)$, and (2) letting $U$ be the first such set enumerated, for every $z\geq \max_{f'\in U}\z^X_{\K}(f')$ there is a $4n_\Xi-1$-branching $U'\subseteq U$ such that for each $g\in U'$, there is a $4n_\Xi-1$-branching set of extensions of $g$, $U_g\subseteq\mathcal{K}^{X,-,1}(x,z)$.

$\mathcal{K}^X$ is \emph{uniformly dense} below $(f,n_\Xi,\Xi)$ if either $\mathcal{K}^X$ is not essential below $(f,n_\Xi,\Xi)$ or there is an $x$ a $2n_\Xi$-branching set of extensions $U$ of $f$, and a $z \geq \max_{f'\in U}\z^X_{\K}(f')$, such that $U\subseteq\mathcal{K}^{X,+}(x,z)$.  $\mathcal{K}^X$ is uniformly dense if it is uniformly dense below every $(f,n_\Xi,\Xi)$.
\end{definition}
The quantifier alternation in the definition of essential requirements reflects the need to resolve the two parts of the requirement simultaneously, without allowing new $\Pi_1$ commitments on $T^\infty$ to show up.  (Compare to the approach used to separate $\ADS$ from $\CAC$ in \cite{LST}.)

\begin{lemma}\label{lemma.RKL-uniform-dense-meets-or-avoids}
  Suppose $\mathcal{K}^X$ is uniformly dense.  Then for every $(f,n_\Xi,\Xi)$, there is an $(f',n'_\Xi,\Xi')\preceq(f,n_\Xi,\Xi)$ and an $(x,z)$ such that either $f'\in\mathcal{K}^{X,+}(x,z)$ or $\Xi'\cap\mathcal{K}^{X,-}(x,z)=\emptyset$.
\end{lemma}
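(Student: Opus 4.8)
The plan is to follow the structure of Lemma~\ref{lemma-dDNR-iteration-extending-f}, splitting on whether $\K^X$ is essential below $(f,n_\Xi,\Xi)$. Since $\K^X$ is uniformly dense it is in particular uniformly dense below $(f,n_\Xi,\Xi)$, so by Definition~\ref{defn.RKL-essential-and-dense} either (i) $\K^X$ is not essential below $(f,n_\Xi,\Xi)$, or (ii) there are an $x$, a $2n_\Xi$-branching set of extensions $U$ of $f$, and a $z\geq\max_{f'\in U}\z^X_{\K}(f')$ with $U\subseteq\K^{X,+}(x,z)$. In case (ii) we are essentially done: by Lemma~\ref{thm:dnr_extension} there is a $2$-branching $U^*\subseteq U\cap\Xi$ with $\Xi\upharpoonright g$ blocking at width $n_\Xi$ for each $g\in U^*$, and by Lemma~\ref{lemma.branch_gives_DNR} we may take $g\in U^*$ which is $\mathrm{DNR}^X$; then $(g,n_\Xi,\Xi\upharpoonright g)\preceq(f,n_\Xi,\Xi)$ and, since $z\geq\z^X_{\K}(g)$, $g\in U\subseteq\K^{X,+}(x,z)$, which is the first alternative of the lemma.

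So assume (i): there is an $x$ for which clause (1) or clause (2) of ``essential below $(f,n_\Xi,\Xi)$'' fails. If clause (1) fails, so no $6n_\Xi-2$-branching set of extensions of $f$ lies inside $\K^{X,-,0}(x)$, set $\Xi'=\{g\in\Xi : g\notin\K^{X,-,0}(x)\}$. By Lemma~\ref{tree_pigeonhole} (with $n=6n_\Xi-2$, $m=n_\Xi$) every $7n_\Xi-3$-branching set of extensions of $f$ contains an $n_\Xi$-branching subset disjoint from $\K^{X,-,0}(x)$, and this subset meets $\Xi$ since $\Xi$ blocks at width $n_\Xi$; hence $\Xi'$ blocks at width $7n_\Xi-3$. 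As $\K^{X,-}(x,z)\subseteq\K^{X,-,0}(x)$ for every $z$, the condition $(f,7n_\Xi-3,\Xi')$ together with any pair $(x,z)$ witnesses the second alternative.

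The interesting subcase is when clause (1) holds for $x$ but clause (2) fails. Let $U_0$ be the first $6n_\Xi-2$-branching set of extensions of $f$ inside $\K^{X,-,0}(x)$, and let $z\geq\max_{f'\in U_0}\z^X_{\K}(f')$ witness the failure of (2); thus the set of $g\in U_0$ admitting a $4n_\Xi-1$-branching set of extensions inside $\K^{X,-,1}(x,z)$ contains no $4n_\Xi-1$-branching subset of $U_0$. Since $U_0$ is $(4n_\Xi-1)+(2n_\Xi-1)$-branching, Lemma~\ref{tree_pigeonhole} yields a $2n_\Xi$-branching $U_0'\subseteq U_0$ none of whose elements $g$ admits a $4n_\Xi-1$-branching set of extensions inside $\K^{X,-,1}(x,z)$. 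Applying Lemmas~\ref{thm:dnr_extension} and~\ref{lemma.branch_gives_DNR} to $U_0'$ produces $g^*\in U_0'\cap\Xi$ which is $\mathrm{DNR}^X$ and has $\Xi\upharpoonright g^*$ blocking at width $n_\Xi$. Because $g^*\in U_0$ we have $\z^X_{\K}(g^*)\leq z$, so by monotonicity of $\z^X_{\K}$ (Definition~\ref{defn.RKL-Requirement}) every $h\supseteq g^*$ satisfies $\z^X_{\K}(h)=\z^X_{\K}(g^*)\leq z$ and lies in $\K^{X,-,0}(x)$; hence for such $h$, membership in $\K^{X,-}(x,z)$ is equivalent to membership in $\K^{X,-,1}(x,z)$. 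Setting $\Xi'=\{h\in\Xi\upharpoonright g^* : h\notin\K^{X,-,1}(x,z)\}$, Lemma~\ref{tree_pigeonhole} (with $n=4n_\Xi-1$, $m=n_\Xi$) shows $\Xi'$ blocks at width $5n_\Xi-2$, so $(g^*,5n_\Xi-2,\Xi')\preceq(f,n_\Xi,\Xi)$ and $\Xi'\cap\K^{X,-}(x,z)=\emptyset$.

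The main obstacle is this last subcase: one must read off the correct combinatorial consequence of the failure of clause (2) (the $2n_\Xi$-branching $U_0'$), and then verify that passing to $g^*$ genuinely collapses $\K^{X,-}(x,z)$ to $\K^{X,-,1}(x,z)$ on all extensions of $g^*$. The latter is precisely where the monotonicity of $\z^X_{\K}$ and the fact that $g^*$ sits inside the specific ``first enumerated'' set $U_0$ (so that $\z^X_{\K}(g^*)\leq z$) are essential. One also has to keep careful track of the branching arithmetic $6n_\Xi-2=(4n_\Xi-1)+(2n_\Xi-1)$ and $5n_\Xi-2=(4n_\Xi-1)+(n_\Xi-1)$ through the nested applications of Lemmas~\ref{tree_pigeonhole}, \ref{thm:dnr_extension}, and~\ref{lemma.branch_gives_DNR}.
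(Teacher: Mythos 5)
Your proposal is correct and follows essentially the same approach as the paper: you split on the two disjuncts in the definition of uniform density below $(f,n_\Xi,\Xi)$, then further split the ``not essential'' alternative according to whether clause (1) or clause (2) of essentialness fails, and in each branch you apply Lemma~\ref{tree_pigeonhole} with the same branching arithmetic ($7n_\Xi-3$, $6n_\Xi-2 = (4n_\Xi-1)+(2n_\Xi-1)$, $5n_\Xi-2$) as the paper. You are slightly more explicit than the paper in two places---citing Lemma~\ref{lemma.branch_gives_DNR} to get a $\mathrm{DNR}^X$ member out of a $2$-branching set (which the paper leaves implicit when it asserts the extension is a condition), and spelling out why $\K^{X,-}(x,z)$ collapses to $\K^{X,-,1}(x,z)$ on extensions of $g^*$ via monotonicity of $\z^X_\K$ and the $\exists f'\subseteq f$ form of the definitions---but the underlying argument is the one in the paper.
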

\begin{proof}
First, suppose that for some $x$, whenever $U$ is an $6n_\Xi-2$-branching set of extensions of $f$, there is a $g\in U$ with $g\not\in\mathcal{K}^{X,-,0}(x)$.  
Let $\Xi'\subseteq\Xi$ consist of those $g$ such that $g\in\Xi\setminus\mathcal{K}^{X,-}(x)$. 
We claim that $\Xi'$ is blocking at width $6n_\Xi-2+n_\Xi-1$, so $(f,7n_\Xi-3,\Xi')\preceq(f,n_\Xi,\Xi)$ satisfies the second case.
To see this, recall by Lemma \ref{tree_pigeonhole} that any $6n_\Xi-2+n_\Xi-1$-branching set $U'$ of strings either has a $6n_\Xi-2$-branching subset of $\K^{X,-,0}(x)$ or a $n_\Xi$-branching subset of $U'\backslash \K^{X,-,0}(x)$.  
Because no $6n_\Xi-2$-branching set can be contained in $\K^{X,-,0}(x)$, it follows that there is an $n_\Xi$-branching subset $U''$ of $U'$ contained in $U'\backslash \K^{X,-,0}(x)$. 
Furthermore, because $\Xi$ is $n_\Xi$-branching, there is some $g\in\Xi\cap U''\subseteq \Xi\backslash\K^{X,-,0}(x)=\Xi'$, as desired.

Otherwise, for every $x$ we may find a $6n_\Xi-2$-branching set $U_x$ of extensions of $f$ with $U_{x}\subseteq\mathcal{K}^{X,-,0}(x)$.  For each $g\in U_{x}$ and each $z\geq \max_{f\in U}\z^X_{\K}(f)$, we ask whether there is a $4n_\Xi-1$-branching set of extensions $U_{g,z}\subseteq\mathcal{K}^{X,-,1}(x,z)$.  Let $U^0_{x,z}\subseteq U_x$ be the set of $g\in U_{x}$ such that there exists such a $U_{g,z}$.

Suppose that for some $x$ and some $z\geq \max_{f\in U}\z^X_{\K}(f)$, that $U^0_{x,z}$ does not contain a $4n_\Xi-1$-branching set of extensions.  Then $U_{x}\setminus U^0_{x,z}$ does contain a $2n_\Xi$-branching set of extensions.  By Lemma \ref{thm:dnr_extension}, there is a $g\in (U_{x}\setminus U^0_{x,z})\cap\Xi$ such that $(g,n_\Xi,\Xi\upharpoonright g)\preceq(f,n_\Xi,\Xi)$ is a condition.  Let $\Xi'\subseteq\Xi\upharpoonright g$ consist of those $g'\not\in\mathcal{K}^{X,-,1}(x,z)$.  Since there is no $4n_\Xi-1$-branching $U_{g,z}\subseteq \mathcal{K}^{X,-,1}(x,z)$ and because $z\geq\z^X_{\K}(g)$, $(g,5n_\Xi-2,\Xi')\preceq(f,n_\Xi,\Xi)$ is a condition and $\Xi'\cap\mathcal{K}^{X,-}(x,z)=\emptyset$.

Otherwise the sets $U_{x}$ witness that $\mathcal{K}^X$ is essential below $(f,n_\Xi,\Xi)$.
By the definition of uniform density, there is a $2n_\Xi$-branching set of extensions of $f$, $U\subseteq\mathcal{K}^{X,+}(x,z)$, and therefore a $(g,n_\Xi,\Xi\upharpoonright g)\preceq(f,n_\Xi,\Xi)$ such that $g\in\mathcal{K}^{X,+}(x,z)$.
\end{proof}

\begin{theorem}\label{thm:settling_everything_RKL}
  Fix any countable collection of uniformly dense requirements $\K^X_k$.  Then there is a $\mathrm{DNR}^X$ function $f^\infty$ settling every $\K^X_k$.
\end{theorem}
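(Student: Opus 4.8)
The plan is to repeat the proof of Theorem~\ref{thm:settling_everything} almost verbatim, with Lemma~\ref{lemma.RKL-uniform-dense-meets-or-avoids} playing the role that Lemma~\ref{lemma-dDNR-iteration-extending-f} played there. First I would fix an enumeration $\mathcal{K}^X_0,\mathcal{K}^X_1,\ldots$ of the given requirements and build a $\preceq$-descending sequence of conditions $(f^0,n^0_\Xi,\Xi^0)\succeq(f^1,n^1_\Xi,\Xi^1)\succeq\cdots$ in $\mathbb{P}^X$, starting from the trivial condition $(\lambda,1,\omega^{\subset\omega})$. At a stage assigned to $\mathcal{K}^X_k$, I apply Lemma~\ref{lemma.RKL-uniform-dense-meets-or-avoids} to the current condition, obtaining a stronger condition together with a pair $(x_k,z_k)$ witnessing that either $f^{k+1}\in\mathcal{K}^{X,+}_k(x_k,z_k)$ or $\Xi^{k+1}\cap\mathcal{K}^{X,-}_k(x_k,z_k)=\emptyset$. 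At interleaved stages I instead extend the current condition so that a prescribed integer $n$ lies in the domain of the underlying function: apply Lemma~\ref{thm:dnr_extension} to a sufficiently long $2n_\Xi$-branching set of extensions of $f^k$ all having $n$ in their domain, and then Lemma~\ref{lemma.branch_gives_DNR} to select a $\mathrm{DNR}^X$ element of the resulting $2$-branching set. Arranging the interleaving so that every requirement and every domain task is eventually addressed, I set $f^\infty=\bigcup_k f^k$, which is then total.

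It remains to verify that $f^\infty$ is $\mathrm{DNR}^X$ and settles each $\mathcal{K}^X_k$. The first is immediate: each $f^k$ is $\mathrm{DNR}^X$ by the definition of $\mathbb{P}^X$, and $\mathrm{DNR}^X$-ness is a constraint on each $e$ in the domain separately, hence is inherited by the union of a $\subseteq$-chain. For the second, fix $k$ and the pair $(x_k,z_k)$ produced at the stage for $\mathcal{K}^X_k$. In the positive case there is a finite $f\subseteq f^{k+1}\subseteq f^\infty$ with $f\in\mathcal{K}^{X,+}_k(x_k,z_k)$, which is exactly what ``$f^\infty$ settles $\mathcal{K}^X_k$'' requires. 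In the negative case, suppose toward a contradiction that some finite $f\subseteq f^\infty$ lies in $\mathcal{K}^{X,-}_k(x_k,z_k)$. Choose $j\geq k$ with $f\cup f^k\subseteq f^j$; transitivity of $\preceq$ in $\mathbb{P}^X$ gives $f^j\in\Xi^k$, and then the downward-closure property of families of completions gives $f\cup f^k\in\Xi^k$. On the other hand, since membership in $\mathcal{K}^{X,-,0}_k$ and $\mathcal{K}^{X,-,1}_k$ is witnessed by a sub-function of the input and the value of $\z^X_{\mathcal{K}_k}$ is unchanged under extension (conditions~(2) and~(3) of Definition~\ref{defn.RKL-Requirement}), from $f\in\mathcal{K}^{X,-}_k(x_k,z_k)$ we get $f\cup f^k\in\mathcal{K}^{X,-}_k(x_k,z_k)$, contradicting $\Xi^k\cap\mathcal{K}^{X,-}_k(x_k,z_k)=\emptyset$. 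Hence no finite $f\subseteq f^\infty$ lies in $\mathcal{K}^{X,-}_k(x_k,z_k)$, so $f^\infty$ settles $\mathcal{K}^X_k$.

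The substantive combinatorial work has already been carried out in Lemma~\ref{lemma.RKL-uniform-dense-meets-or-avoids}, so I do not expect a genuine obstacle here. The one point requiring a little care is the bookkeeping in the negative case: the definition of ``settles'' quantifies over \emph{all} finite $f\subseteq f^\infty$, not merely the $f^j$ or the initial segments of $f^\infty$, and one must use both the downward closure of $\Xi^k$ above $f^k$ and the monotonicity built into Definition~\ref{defn.RKL-Requirement} to reduce to the $f^j$, as above. Beyond that, one should confirm that the interleaving can be set up so that no requirement or domain task is starved, and that every step genuinely yields an element of $\mathbb{P}^X$, which is exactly what Lemmas~\ref{thm:dnr_extension} and~\ref{lemma.branch_gives_DNR} (and Lemma~\ref{lemma.RKL-uniform-dense-meets-or-avoids} itself) guarantee.
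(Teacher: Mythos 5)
Your proposal follows the same one-paragraph argument the paper uses: enumerate the requirements, iteratively apply Lemma~\ref{lemma.RKL-uniform-dense-meets-or-avoids} to build a $\preceq$-descending chain of conditions, and take the union. You go into more detail than the paper on two points it treats as routine---interleaving steps (via Lemma~\ref{thm:dnr_extension} and Lemma~\ref{lemma.branch_gives_DNR}) to ensure $f^\infty$ is total, and verifying that the negative outcome $\Xi'\cap\mathcal{K}^{X,-}_k(x_k,z_k)=\emptyset$ genuinely rules out every finite $f\subset f^\infty$ (using downward closure of $\Xi$ together with the upward closure of $\mathcal{K}^{X,-}$ under extension, which you correctly derive from clauses~(2) and~(3) of Definition~\ref{defn.RKL-Requirement}); both of these are sound and worth having spelled out, and aside from a cosmetic indexing slip (your $\Xi^k$ should be the $\Xi$ from the stage handling requirement $k$, not literally the $k$-th condition once interleaving is in place), the argument is correct.
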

\begin{proof}
  Enumerate the uniformly dense requirements $\K^X_0,\K^X_1,\ldots$.  Construct a sequence of conditions $(f^0,n^0_\Xi,\Xi^0)\preceq(f^1,n^1_\Xi,\Xi^1)\preceq\cdots$ by repeatedly applying the previous lemma so that for each $k$, there is an $x$ such that either $f^k\in\mathcal{K}^{X,+}_k(x)$ or $\Xi^k\cap\mathcal{K}^{X,-}_k(x)=\emptyset$.  Then letting $f^\infty=\bigcup_k f^k$, $f^\infty$ is $\mathrm{DNR}^X$ by the definition of $\mathbb{P}^X$ and settles each $\mathcal{K}^X_k$.
\end{proof}

In summary, we have seen that settling each $\W^X$ ensures we avoid solving the instance $T^\infty$ of $\RKL$, and that we can settle each uniformly dense $\K^X$.
In the ground forcing construction, we will build $T^\infty$ so that all the requirements $\K^{T^\infty}$ are uniformly dense.

We now define the iteration requirements.
When satisfied, these requirements will ensure that other requirements remain uniformly dense with respect to $X\oplus f^\infty$.
This guarantees that the construction can be iterated to form a Turing ideal.

As before, the focus will be on moving from $\R^{X,-,1}(x,z)$ to $\R^{X,+}(x,z)$ (satisfying essential requirements), not on moving from $\R^{X,-,0}$ to $\R^{X,-,1}$ (determining if a requirement is essential).

\begin{definition}
  Let $e_0,e_1,e_2,e_3$ be given and let $q=(g,m_\Pi,\Pi)$ be a triple.  We let $\vec e$ be the sequence $e_0,e_1,e_2,e_3$.  
	
	Then $\mathcal{R}^{X,-,0}_{\vec e,q}(x)$ is the set of $f$ such that either:
  \begin{itemize}
  \item $f$ forces that $\mathcal{K}^{X\oplus f^\infty}_{\vec e}$ is not a $\mathrm{RKL}^{T^\infty}$-requirement, or
  \item $f$ forces that $q$ is not a condition, or
  \item There is a $6m_\Pi-2$-branching set of extensions of $g$ contained in $\mathcal{K}^{X\oplus f,-,0}_{\vec e}(x)$.
  \end{itemize}
If $f\in\mathcal{R}^{X,-,0}_{\vec e,q}(x)$ due to either of the first two cases, $\z^X_{\mathcal{R}_{\vec e,q}}(f)=0$.

When $f\in\mathcal{R}^{X,-,0}_{\vec e,q}(x)$ in the third case, we let $U^X_{\mathcal{R}_{\vec e,q}}(f,x)$ be the first $6m_\Pi-2$-branching set of extensions of $g$ contained in $\mathcal{K}^{X\oplus f,-,0}_{\vec e}(x)$ found.  In this case we set:
\[\z^X_{\mathcal{R}_{\vec e,q}}(f)=\max_{g'\in U^X_{\mathcal{R}_{\vec e,q}}(f,x)}\z^{X\oplus f}_{\mathcal{K}_{\vec e}}(g').\]

$\mathcal{R}^{X,-,1}_{\vec e,q}(x,z)$ is the set of $f$ such that either:
  \begin{itemize}
  \item $f$ forces that $\mathcal{K}^{X\oplus f^\infty}_{\vec e}$ is not a $\mathrm{RKL}^{T^\infty}$-requirement, or
  \item $f$ forces that $q$ is not a condition, or
  \item There is a $4m_\Pi-1$-branching $U'\subseteq U^X_{\mathcal{R}_{\vec e,q}}(f,x)$ such that for each $g'\in U'$, there is a $4m_\Pi-1$-branching set of extensions $U'_{g'}$ of $g'$ with $U'_{g'}\subseteq \mathcal{K}^{X\oplus f,-,1}_{\vec e}(x,z)$.
  \end{itemize}
$\mathcal{R}^{X,+}_{\vec e,q}(x,z,\{Y_a\}_{a\in(x,z]})$ is the set of $f$ such that either
  \begin{itemize}
  \item $f$ forces that $\mathcal{K}^{X\oplus f^\infty}_{\vec e}$ is not a $\mathrm{RKL}^{T^\infty}$-requirement, or
  \item $f$ forces that $q$ is not a condition, or
  \item There is a $2m_\Pi$-branching set of extensions of $g$, $$U'\subseteq \mathcal{K}_{\vec e}^{X\oplus f,+}(x,z,\{Y_a\}_{a\in(x,z]}).$$
  \end{itemize}
\end{definition}

\begin{lemma}
  $\mathcal{R}^X_{\vec e,q}$ is a $\mathrm{RKL}^{T^\infty}$-requirement.
\end{lemma}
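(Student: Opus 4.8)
The plan is to verify the four clauses of Definition~\ref{defn.RKL-Requirement} directly, following the same template used for $\mathcal{R}^X$ in the $d\mhyphen\DNR$ case (Lemma~\ref{lemma.dDNR-R-is-req}), with Lemma~\ref{lemma.dDNR-iteratedpigeon} doing the combinatorial work. First I would observe that clause~(1) (uniformity) is immediate: each of $\mathcal{R}^{X,-,0}_{\vec e,q}$, $\mathcal{R}^{X,-,1}_{\vec e,q}$, and $\mathcal{R}^{X,+}_{\vec e,q}$ is a disjunction of three conditions, each of which is $\Sigma^0_1$ in $X$ with a terminating search (``$f$ forces $\mathcal{K}^{X\oplus f}_{\vec e}$ is not an RKL-requirement'' and ``$f$ forces $q$ is not a condition'' are $\Sigma^0_1$ by the standard convention that failing to be a condition/requirement is $\Sigma^0_1$, and the branching-set clauses are finitary searches over extensions of $g$). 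For clause~(2), the function $\z^X_{\mathcal{R}_{\vec e,q}}$ is defined to be $0$ in the first two cases and $\max_{g'\in U^X_{\mathcal{R}_{\vec e,q}}(f,x)}\z^{X\oplus f}_{\mathcal{K}_{\vec e}}(g')$ in the third; since $U^X_{\mathcal{R}_{\vec e,q}}(f,x)$ is the \emph{first} suitable branching set found, it is monotone under extension of $f$, and $\z^{X\oplus f}_{\mathcal{K}_{\vec e}}$ is itself monotone by clause~(2) applied to $\mathcal{K}_{\vec e}$, so the composite is monotone. Clause~(3) follows because once $f\in\mathcal{R}^{X,-,0}_{\vec e,q}(x)$ the value of $\z^X_{\mathcal{R}_{\vec e,q}}(f)$ is computed from the (finite) witnessing branching set, which is found within the same number of steps as the enumeration into $\mathcal{R}^{X,-,0}_{\vec e,q}(x)$.

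The heart of the argument is clause~(4). Suppose $f\in\mathcal{R}^{X,-,0}_{\vec e,q}(x)$ and $f\in\mathcal{R}^{X,-,1}_{\vec e,q}(x,z)$ with $z\geq\z^X_{\mathcal{R}_{\vec e,q}}(f)$, and suppose for each $a\in(x,z]$ we have a partition $Y_{a,0}\cup Y_{a,1}$ of $(z,y')$; I must produce $j\in\{0,1\}$ with $f\in\mathcal{R}^{X,+}_{\vec e,q}(x,z,\{Y_{a,j}\})$. If $f$ lands in either of the first two ``forcing'' cases of $\mathcal{R}^{X,-,0}$ or $\mathcal{R}^{X,-,1}$, then it trivially lands in the corresponding case of $\mathcal{R}^{X,+}$ for \emph{both} $j$, and we are done. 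Otherwise we are in the third case of both: we have the branching set $U=U^X_{\mathcal{R}_{\vec e,q}}(f,x)\subseteq\mathcal{K}^{X\oplus f,-,0}_{\vec e}(x)$, and a $4m_\Pi-1$-branching $U'\subseteq U$ such that each $g'\in U'$ has a $4m_\Pi-1$-branching set of extensions $U'_{g'}\subseteq\mathcal{K}^{X\oplus f,-,1}_{\vec e}(x,z)$. Taking the union $\bigcup_{g'\in U'}U'_{g'}$ gives a $4m_\Pi-1$-branching set of extensions of $g$ (composing branching of length $\ell$ with branching of length $1$ at the same width repeatedly, exactly as in the closure clause of the definition of branching sets) contained in $\mathcal{K}^{X\oplus f,-,1}_{\vec e}(x,z)$. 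Now I invoke clause~(4) for $\mathcal{K}_{\vec e}$ itself: since $z\geq\z^X_{\mathcal{R}_{\vec e,q}}(f)=\max_{g'}\z^{X\oplus f}_{\mathcal{K}_{\vec e}}(g')$, each such $g'$ has $z\geq\z^{X\oplus f}_{\mathcal{K}_{\vec e}}(g')$, so for each element $h$ of this branching set there is a $j_h\in\{0,1\}$ with $R^{X\oplus f,+}_{e_2}(x,z,\cdot,\{Y_{a,j_h}\},h)$, i.e. $h\in\mathcal{K}^{X\oplus f,+}_{\vec e}(x,z,\{Y_{a,j_h}\})$. Coloring the $4m_\Pi-1$-branching set by $j_h\in\{0,1\}$ and applying Lemma~\ref{lemma.dDNR-iteratedpigeon} with $k=2$, $n=2m_\Pi$ (since $2(2m_\Pi-1)+1=4m_\Pi-1$), we extract a single $j$ and a $2m_\Pi$-branching set of extensions of $g$ contained in $\mathcal{K}^{X\oplus f,+}_{\vec e}(x,z,\{Y_{a,j}\})$. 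That is exactly the third case of $\mathcal{R}^{X,+}_{\vec e,q}(x,z,\{Y_{a,j}\})$, so $f\in\mathcal{R}^{X,+}_{\vec e,q}(x,z,\{Y_{a,j}\})$.

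The main obstacle I anticipate is bookkeeping around the branching widths and around the correct reading of ``$f$ forces $\ldots$'': one has to be careful that the value $\z^X_{\mathcal{R}_{\vec e,q}}(f)$ was set up precisely so that the hypothesis $z\geq\z^X_{\mathcal{R}_{\vec e,q}}(f)$ hands us, for \emph{every} relevant $g'$, the inequality needed to invoke clause~(4) of $\mathcal{K}_{\vec e}$ — and that the $4m_\Pi-1 = 2(2m_\Pi-1)+1$ arithmetic is exactly what makes the $k=2$ iterated pigeonhole of Lemma~\ref{lemma.dDNR-iteratedpigeon} yield a $2m_\Pi$-branching monochromatic subset. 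I would also double-check that when $f$ sits in one of the ``forcing'' cases the monotonicity of the forcing relation in $f$ is used, so that these cases are genuinely $\Sigma^0_1$ and behave well under extension; but no new idea beyond those already in the $d\mhyphen\DNR$ treatment is needed.
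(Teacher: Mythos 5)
Your proof is correct and takes essentially the same approach as the paper: the first three clauses are dispatched by the same uniformity observation as in Lemma~\ref{lemma.dDNR-R-is-req}, and clause~(4) is handled by reducing to the forcing cases plus a pigeonhole argument over the composite branching set, using the monotonicity of $\z$ to justify invoking clause~(4) for $\mathcal{K}_{\vec e}$ at each extension. The only cosmetic difference is that where the paper applies Lemma~\ref{tree_pigeonhole} separately to each $U'_{g'}$ and then to $U'$, you merge the two levels into a single $4m_\Pi-1$-branching set of extensions of $g$ and apply Lemma~\ref{lemma.dDNR-iteratedpigeon} once; both routes produce the same $2m_\Pi$-branching monochromatic set.
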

\begin{proof}
We wish to show that $\R$ satisfies Definition \ref{defn.RKL-Requirement}. 
The uniformity properties of $\mathrm{RKL}^{T^\infty}$-requirements, the first three conditions, ensure the uniformity of convergence of the various sets used to define $\R^{X}_{\vec e,q}$.
The failure of any of these properties to hold of $\R^{X}_{\vec e,q}$ will therefore imply the failure in the uniformity of the corresponding computations in $\mathcal{K}^{X\oplus f}_{\vec e}$, causing $f$ to force $\mathcal{K}^{X\oplus f^\infty}_{\vec e}$ not to be a $\mathrm{RKL}^{T^\infty}$-requirement.

For the final property, suppose that $f\in\mathcal{R}^{X,-,0}_{\vec e,q}(x)\cap\mathcal{R}^{X,-,1}_{\vec e,q}(x,z)$, that $z\geq \z^X_{\mathcal{R}_{\vec e,q}}$, and that for each $a\in(x,z]$, $Y_{a,0}\cup Y_{a,1}$ is a partition of $(z,y')$ for $y'$ large enough to ensure termination of all computations.
We wish to show that there is a $j\in\{0,1\}$ such that $f\in \mathcal{R}^{X,+}_{\vec e}(x,z,\{Y_{a,j}\})$.

Because $f\in \R^{X,-,1}$, there is a $4m_\Pi-1$-branching $U'\subseteq U^X_{\mathcal{R}_{\vec e,q}}(f)$ such that for each $g'\in U'$, there is a $4m_\Pi-1$-branching set of extensions $U'_{g'}$ of $g'$ contained in $\mathcal{K}^{X\oplus f,-,1}_{\vec e}(x,z)$.  

We now use the fact that $\K^X$ is itself a requirement, and thus satisfies (4) of Definition \ref{defn.RKL-Requirement}. 
By our choice of $U'$, for each $g'\in U'$ and each $g''\in U'_{g'}$, $g''\in \K^{X\oplus f,-,0}_{\vec e}(x)\cap\K^{X\oplus f,-,1}_{\vec e}(x,z)$.
Furthermore, by definition of $\R_{\vec e,q}$, $z\geq \z^X_{\mathcal{R}_{\vec e,q}}\geq \z^{X\oplus f}_{\mathcal{K}_{\vec e}}(g')=\z^{X\oplus f}_{\mathcal{K}_{\vec e}}(g'')$.
Because $\K^{X\oplus f}_{\vec e}$ is a requirement, it follows that for each $g'\in U'$, for each $g''\in U'_{g'}$ there is a $j_{g''}$ such that $g''\in\mathcal{K}^{X\oplus f,+}_{\vec e}(x,z,\{Y_{a,j_{g''}}\})$.  Applying Lemma \ref{tree_pigeonhole} to each $U'_{g'}$, for each $g'\in U'$ there is a $j_{g'}$ so that there is a $2m_\Pi$-branching subset of $U'_{g'}$ with $j_{g''}=j_{g'}$.  Applying Lemma \ref{tree_pigeonhole} to $U'$, there is a $j$ and a $2m_\Pi$-branching subset of $U'$ with $j_{g'}=j$.  

Examining the definition of $\K^{X\oplus f,-,1}_{\vec e}(x,z)$ and $\mathcal{K}^{X\oplus f,+}_{\vec e}(x,z,\{Y_{a,j_{g''}}\})$ for the relevant $g''$ extending $g$, we see that $j$ witnesses that $f\in\mathcal{R}^{X,+}_{\vec e,q}(x,z,\{Y_{a,j}\})$.
\end{proof}

The definition of $\R$ is modeled after the definition of uniform density for $\K$.  
Unpacking this yields the following lemma.

\begin{lemma}
  If $f^\infty$ settles $\mathcal{R}^X_{\vec e,q}$ for every $q$ then $\K^{X\oplus f^\infty}_{\vec e}$ is uniformly dense.
\end{lemma}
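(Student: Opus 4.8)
The plan is to show that if $f^\infty$ settles $\mathcal{R}^X_{\vec e,q}$ for every condition $q$, then $\mathcal{K}^{X\oplus f^\infty}_{\vec e}$ is uniformly dense below every condition $q=(g,m_\Pi,\Pi)$. Fix such a $q$, and assume $\mathcal{K}^{X\oplus f^\infty}_{\vec e}$ is essential below $q$ (otherwise uniform density holds vacuously); we must produce $x$, $z\geq\max_{f'\in U}\z^{X\oplus f^\infty}_{\mathcal{K}_{\vec e}}(f')$, and a $2m_\Pi$-branching set $U\subseteq\mathcal{K}^{X\oplus f^\infty,+}_{\vec e}(x,z)$. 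The key structural fact is that since $\mathcal{K}^{X\oplus f^\infty}_{\vec e}$ is a requirement and is essential, no $f\subset f^\infty$ can force $q$ to be a non-condition or $\mathcal{K}^{X\oplus f}_{\vec e}$ to be a non-requirement; hence for any $f\subset f^\infty$, membership in $\mathcal{R}^{X,-,0}_{\vec e,q}(x)$, $\mathcal{R}^{X,-,1}_{\vec e,q}(x,z)$, $\mathcal{R}^{X,+}_{\vec e,q}(x,z,\{Y_a\})$ is equivalent to the third (``branching'') clause of the corresponding definition.

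First I would use the essentiality hypothesis to locate, for every $x$, a $6m_\Pi-2$-branching set of extensions of $g$ inside $\mathcal{K}^{X\oplus f^\infty,-,0}_{\vec e}(x)$; since $\mathcal{K}$ has an existential ($\Sigma^0_1$) definition over $X\oplus f^\infty$, some finite $f\subset f^\infty$ already witnesses this, so $f\in\mathcal{R}^{X,-,0}_{\vec e,q}(x)$ via the third clause. By the definition of $\z^X_{\mathcal{R}_{\vec e,q}}$, this pins down $U^X_{\mathcal{R}_{\vec e,q}}(f,x)$ and the value $\z^X_{\mathcal{R}_{\vec e,q}}(f)$. Next, using part (2) of essentiality for $\mathcal{K}^{X\oplus f^\infty}_{\vec e}$ — applied at some $z$ at least $\z^X_{\mathcal{R}_{\vec e,q}}(f)$ — I get a $4m_\Pi-1$-branching $U'$ and the nested $4m_\Pi-1$-branching sets $U'_{g'}\subseteq\mathcal{K}^{X\oplus f^\infty,-,1}_{\vec e}(x,z)$; again by the $\Sigma^0_1$ nature of $\mathcal{K}^{-,1}$ some (possibly longer) finite $f\subset f^\infty$ witnesses all of this simultaneously, giving $f\in\mathcal{R}^{X,-,1}_{\vec e,q}(x,z)$, and hence $f\in\mathcal{R}^{X,-}_{\vec e,q}(x,z)$.

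Now I invoke the settling hypothesis: there is a pair $(x,z)$ witnessing that $f^\infty$ settles $\mathcal{R}^X_{\vec e,q}$. Because the previous paragraph shows that for a cofinal family of $(x,z)$ there is a finite $f\subset f^\infty$ in $\mathcal{R}^{X,-}_{\vec e,q}(x,z)$, the ``negative'' alternative of settling cannot be the one that holds — more precisely, I would argue that if the settling pair gave $f\not\in\mathcal{R}^{X,-}_{\vec e,q}(x,z)$ for all $f\subset f^\infty$, then (since neither of the first two clauses can fail) this contradicts the branching sets produced above at that same $(x,z)$. (This is the step that mirrors the argument in Lemma~\ref{lemma.dDNR-settling-R-preserves-density}, and it is where one must be a little careful that the $z$ used in essentiality and the $z$ from settling can be taken to match, exploiting that essentiality quantifies over all sufficiently large $z$.) Therefore the ``positive'' alternative holds: there is a finite $f\subset f^\infty$ with $f\in\mathcal{R}^{X,+}_{\vec e,q}(x,z)$, where $\mathcal{R}^{X,+}_{\vec e,q}(x,z)$ is formed using the true parameters $\{Y_a(T^\infty)\}_{a\in(x,z]}$. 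Unpacking this via the third clause of $\mathcal{R}^{X,+}$, there is a $2m_\Pi$-branching set of extensions of $g$ contained in $\mathcal{K}^{X\oplus f,+}_{\vec e}(x,z,\{Y_a(T^\infty)\})=\mathcal{K}^{X\oplus f^\infty,+}_{\vec e}(x,z)$, which is exactly the $U$ demanded by uniform density below $q$.

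The main obstacle I anticipate is the bookkeeping around $z$ and the order of quantifiers in the definition of ``essential'': essentiality fixes $U$ from the first enumerated $6m_\Pi-2$-branching set and then asks for a $4m_\Pi-1$-branching refinement for \emph{every} $z\geq\max_{f'\in U}\z^X_{\mathcal{K}_{\vec e}}(f')$, whereas settling hands us one particular $(x,z)$. I would reconcile these by noting that the finite $f$ witnessing $f\in\mathcal{R}^{X,-,0}_{\vec e,q}(x)$ fixes $U^X_{\mathcal{R}_{\vec e,q}}(f,x)$ and hence fixes $\z^X_{\mathcal{R}_{\vec e,q}}(f)$, so the clause ``$z\geq\z^X_{\mathcal{R}_{\vec e,q}}(f)$'' in the definitions of $\mathcal{R}^{X,-,1}$ and $\mathcal{R}^{X,+}$ exactly matches the lower bound appearing in essentiality — this is precisely why $\z^X_{\mathcal{R}_{\vec e,q}}$ was defined as the max over that canonical set. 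A secondary point to handle cleanly is extracting a single finite $f\subset f^\infty$ that simultaneously witnesses several $\Sigma^0_1$ facts; this is routine since finite initial segments of $f^\infty$ are cofinal and each witness is monotone under extension.
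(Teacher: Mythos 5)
Your proposal is correct and follows essentially the same route as the paper's proof: use essentiality to produce the $6m_\Pi-2$- and then the nested $4m_\Pi-1$-branching sets inside $\mathcal{K}^{X\oplus f^\infty,-,0}$ and $\mathcal{K}^{X\oplus f^\infty,-,1}$, pull these back to a sufficiently long finite $f\subset f^\infty$ to get $f\in\mathcal{R}^{X,-}_{\vec e,q}(x,z)$, observe this rules out the negative alternative of settling at the settling pair, and unpack $f\in\mathcal{R}^{X,+}_{\vec e,q}(x,z)$ (using $\{Y_a(T^\infty)\}$ as the parameter) into the required $2m_\Pi$-branching set in $\mathcal{K}^{X\oplus f^\infty,+}_{\vec e}(x,z)$. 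The paper fixes the settling pair $(x,z)$ at the outset and argues at that pair directly, leaving the $z\geq\z$ comparison implicit, whereas you spell out the $z$-bookkeeping and observe that $\z^X_{\mathcal{R}_{\vec e,q}}$ was engineered to coincide with the essentiality bound $\max_{f'\in U}\z^{X}_{\mathcal{K}_{\vec e}}(f')$; this is a correct clarification of the same argument rather than a different one.
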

\begin{proof}
  Let $q$ be a condition and suppose $\mathcal{K}^{X\oplus f^\infty}_{\vec e}$ is essential below $q=(g,m_\Pi,\Pi)$.  Let $(x,z)$ be the value for which $f^\infty$ settles $\mathcal{R}^X_{\vec e,q}$.

Let $U\subseteq\mathcal{K}^{X\oplus f^\infty,-,0}_{\vec e} (x)$ be the $6m_\Pi-2$-branching set of extensions of $g$ ensured by essentialness.  Then $U=U^X_{\vec e,q}(f,x)$ for any $f\subset f^\infty$ large enough to make the necessary computations converge.  There is a $4m_\Pi-1$-branching $U'\subseteq U$ such that for each $g\in U'$, there is a $4m_\Pi-1$-branching set of extensions of $g$, $U_g\subseteq\mathcal{K}^{X\oplus f^\infty,-,1}_{\vec e} (x,z)$.  Therefore there is an $f\subset f^\infty$ such that $f\in\mathcal{R}^{X,-}_{\vec e,q}(x,z)$.

Since $f^\infty$ settles $\mathcal{R}^X_{\vec e,q}$, there is some $f\subset f^\infty$ with $f\in\mathcal{R}^{X,+}_{\vec e,q}(x,z)=\mathcal{R}^{X,+}_{\vec e,q}(x,z,\{Y_a(T^\infty)\})$.  Therefore there is a $2m_\Pi$-branching $U'\subseteq U$ so that for each $g'\in U'$, there is a $2m_\Pi$-branching set of extensions $U_{g'}$ of $g'$ consisting of $g''\in\mathcal{K}_{\vec e}^{X,+}(x,z,\{Y_a(T^\infty)\})=\mathcal{K}_{\vec e}^{X,+}(x,z)$.  Then $\bigcup_{g'\in U'}U_{g'}$ is a $2m_\Pi$-branching set of extensions of $g$ contained in $\mathcal{K}_{\vec e}^{X,+}(x,z)$.
\end{proof}

We conclude:
\begin{theorem}\label{thm:rkl_iterate}
  Suppose $T^\infty\leq_T X$ are such that every requirement $\mathcal{K}^X_{\vec e}$ is uniformly dense.  Then there is a $\mathrm{DNR}^X$ function $f^\infty$ such that no $\Phi^{X\oplus f^\infty}_e$ is a solution to the \RKL{} instance $T^\infty$ and every requirement $\mathcal{K}^{X\oplus f^\infty}_{\vec e}$ is uniformly dense.
\end{theorem}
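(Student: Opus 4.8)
The plan is to assemble the machinery already developed in this subsection; no new combinatorics is needed. The point is that the hypothesis gives uniform density of \emph{every} requirement over $X$, and all the requirements we wish to settle are of this form.

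First I would collect the relevant requirements. On the one hand we have the solution-avoiding requirements $\W^X_e$ from Example \ref{example.RKL-condition-idea}, one for each $e$; settling all of these is exactly what forces $X\oplus f^\infty$ to avoid computing a solution to $T^\infty$. On the other hand we have the bootstrapping requirements $\R^X_{\vec e,q}$, one for each index tuple $\vec e$ and each triple $q=(g,m_\Pi,\Pi)$; settling all of the $\R^X_{\vec e,q}$ with $\vec e$ fixed is what makes $\K^{X\oplus f^\infty}_{\vec e}$ uniformly dense. By Example \ref{example.RKL-condition-idea} and by the lemma that each $\R^X_{\vec e,q}$ is an $\mathrm{RKL}^{T^\infty}$-requirement, every member of this countable family is genuinely a requirement; moreover its three defining relations and its function $\z$ are uniformly $X$-computable with indices effectively obtained from the parameters, so each is $\K^X_{\vec e'}$ for a suitable tuple $\vec e'$. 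Hence by hypothesis each is uniformly dense.

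Second I would apply Theorem \ref{thm:settling_everything_RKL} to the countable collection $\{\W^X_e\}_e\cup\{\R^X_{\vec e,q}\}_{\vec e,q}$ of uniformly dense requirements, producing a single $\mathrm{DNR}^X$ function $f^\infty$ that settles all of them at once. Then I would read off the two conclusions. Since $f^\infty$ settles $\W^X_e$ for every $e$, Lemma \ref{lemma.settles-W-does-not-solve-T} gives that no $\Phi^{X\oplus f^\infty}_e$ is a solution to the $\RKL$ instance $T^\infty$. And for each fixed $\vec e$, since $f^\infty$ settles $\R^X_{\vec e,q}$ for every $q$, the lemma stating that this makes $\K^{X\oplus f^\infty}_{\vec e}$ uniformly dense applies; as $\vec e$ is arbitrary, every requirement $\K^{X\oplus f^\infty}_{\vec e}$ is uniformly dense.

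The only step with any real content is the bookkeeping observation in the first paragraph: that each $\R^X_{\vec e,q}$ is among the requirements $\K^X_{\vec e'}$ to which the hypothesis applies. This amounts to checking that the definitions of $\R^{X,-,0}$, $\R^{X,-,1}$, $\R^{X,+}$, and of $U^X_{\R_{\vec e,q}}$ and $\z^X_{\R_{\vec e,q}}$ involve only $X$-effective searches (for ``the first'' witnessing branching set, and so on), so that indices for the corresponding $\Phi^X$-relations can be produced uniformly; this is immediate from those definitions. The delicate quantifier-alternation phenomenon flagged after Definition \ref{defn.RKL-essential-and-dense} has already been handled inside the proofs of Theorem \ref{thm:settling_everything_RKL} and of Lemma \ref{lemma.RKL-uniform-dense-meets-or-avoids}, so it does not reappear here.
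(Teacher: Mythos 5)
Your proposal is correct and matches the paper's intended argument exactly: the paper gives no explicit proof (it states the theorem after ``We conclude:''), and your reconstruction—apply Theorem \ref{thm:settling_everything_RKL} to the countable family $\{\W^X_e\}_e\cup\{\R^X_{\vec e,q}\}_{\vec e,q}$, then invoke Lemma \ref{lemma.settles-W-does-not-solve-T} and the lemma that settling every $\R^X_{\vec e,q}$ yields uniform density of $\K^{X\oplus f^\infty}_{\vec e}$—is precisely the chain the preceding lemmas are set up to deliver. The bookkeeping observation that each $\W^X_e$ and $\R^X_{\vec e,q}$ is itself some $\K^X_{\vec e'}$, so the hypothesis applies to it, is the right (and only) non-automatic step to flag.
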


\subsection{Ground Forcing}

We now describe a forcing notion for building instances of \RKL.  Recall the symmetry notion on trees above: $\sigma_{/x}$ is the sequence with $\sigma_{/x}\upharpoonright[0,x]=\sigma\upharpoonright[0,x]$ and $\sigma_{/x}\upharpoonright(x,|\sigma|)=1-(\sigma\upharpoonright(x,|\sigma|))$, and a tree $T$ is symmetric across $S$ if whenever $\sigma\in T$ and $x\in S$, $\sigma_{/x}\in T$.

We force with pairs $(T,S)$ where $S$ is a finite set and $T$ is a finite $\{0,1\}$-tree symmetric across $S$.

We say $(T',S')\preceq(T,S)$ if $T'$ extends $T$ and $S\subseteq S'$.  We say $T'\preceq(T,S)$ if $T'$ extends $T$ and is symmetric across $S$.

\begin{lemma}\label{lemma.groundforcing}
For any $(T,S)$, any $\vec e=e_0,e_1,e_2,e_3$, and any $p=(f,n_\Xi,\Xi)$, there is a $(T',S')\preceq(T,S)$ such that either:
\begin{itemize}
\item $(T',S')$ forces that $\mathcal{K}^{T^\infty}_{\vec e}$ is not an $\mathrm{RKL}^{T^\infty}$-requirement,
\item $(T',S')$ forces that $p$ is not a condition, or
\item $\mathcal{K}^{T^\infty}_{\vec e}$ is uniformly dense below $p$.
\end{itemize}
\end{lemma}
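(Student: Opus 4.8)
The plan is to mimic the ground forcing Lemma~\ref{lemma.dDNR-ground-iteration}, but to split the analysis into the two phases of the one‑step \RKL{} construction (first force an element of $W_e^{T^\infty\oplus f^\infty}$ past the symmetry set, then force a second element past a later point $z$) and to invoke Lemma~\ref{thm:sym_breaking} at the point where the $\dDNR$ ground forcing used the plain iterated pigeonhole. If some extension of $(T,S)$ forces one of the first two alternatives we are done, so we may assume throughout that along every extension $\mathcal{K}^{T^\infty}_{\vec e}$ is an $\mathrm{RKL}^{T^\infty}$-requirement and that $p=(f,n_\Xi,\Xi)$ is a condition; in particular all the relations and the functions $\z^{T^\infty}_{\mathcal{K}_{\vec e}}$ obey Definition~\ref{defn.RKL-Requirement}.

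\emph{Phase 1.} Fix $x$ larger than $|T|$ and than every element of $S$; then $T$ is already symmetric across $S\cup\{x\}$, so $(T,S\cup\{x\})\preceq(T,S)$. Ask whether some $T'\preceq(T,S\cup\{x\})$ admits a $(6n_\Xi-2)$-branching set $U$ of extensions of $f$ with $U\subseteq\mathcal{K}^{T',-,0}_{\vec e}(x)$. If not, then since $\mathcal{K}^{T^\infty,-,0}_{\vec e}(x)$ is defined by a $\Sigma^0_1$ formula and any such $U$ occurring in the generic would already occur along a finite symmetric initial segment, $(T,S\cup\{x\})$ forces that clause~(1) of ``essential below $p$'' fails at $x$, hence forces $\mathcal{K}^{T^\infty}_{\vec e}$ not essential below $p$ and \emph{a fortiori} uniformly dense below $p$; we are done. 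If so, fix such $T',U$, and (end‑extending $T'$ while keeping it symmetric across $S\cup\{x\}$) assume its height is large enough that the finitely many computations witnessing $U\subseteq\mathcal{K}^{T',-,0}_{\vec e}(x)$ and the values $\z^{T'}_{\mathcal{K}_{\vec e}}(g)$, $g\in U$, are preserved under every end‑extension, and that $U$ is thereby the \emph{first} such $(6n_\Xi-2)$-branching set along every end‑extension. Put $\zeta=\max_{g\in U}\z^{T'}_{\mathcal{K}_{\vec e}}(g)$ and fix $z>\max(x,|T'|,\zeta)$; then $T'$ is also symmetric across $S\cup\{x,z\}$.

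\emph{Phase 2.} Ask whether some $T''\preceq(T',S\cup\{x,z\})$ admits a $(4n_\Xi-1)$-branching $U'\subseteq U$ such that each $g\in U'$ has a $(4n_\Xi-1)$-branching set $U_g$ of extensions with $U_g\subseteq\mathcal{K}^{T'',-,1}_{\vec e}(x,z)$. If not, the same $\Sigma^0_1$-absoluteness argument shows $(T',S\cup\{x,z\})$ forces clause~(2) of ``essential below $p$'' to fail at $x$ and $z$ (using that $z\geq\zeta$ and that $U$ is the first set), hence forces $\mathcal{K}^{T^\infty}_{\vec e}$ not essential below $p$; we are done. If so, fix $T'',U',\{U_g\}$ and set $\tilde U=\bigcup_{g\in U'}U_g$, a $(4n_\Xi-1)$-branching set of extensions of $f$ contained in $\mathcal{K}^{T'',-,1}_{\vec e}(x,z)\cap\mathcal{K}^{T'',-,0}_{\vec e}(x)$; end‑extending $T''$ once more we may assume $|T''|$ exceeds the termination steps of all these (finitely many) computations. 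With $D_0=(x,z]$, $D_1=(z,|T''|]$ and $\mathcal{K}=\mathcal{K}^{T'',+}_{\vec e}(x,z,\cdot)$, clause~(4) of Definition~\ref{defn.RKL-Requirement} applied to $\mathcal{K}^{T''}_{\vec e}$ (with $z\geq\z^{T''}_{\mathcal{K}_{\vec e}}$) supplies exactly the hypothesis of Lemma~\ref{thm:sym_breaking}, which therefore yields a $T^*\preceq(T'',S\cup\{x\})$ and a $(2n_\Xi)$-branching $U^*\subseteq\tilde U$ such that, writing $Y_a^*$ for the set $\{c\in D_1\mid\sigma(c)\neq\sigma(a)\}$—which by the construction of $T^*$ is the same for every maximal‑length $\sigma\in T^*$—we have $U^*\subseteq\mathcal{K}^{T^*,+}_{\vec e}(x,z,\{Y_a^*\}_{a\in D_0})$.

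It remains to see that $(T^*,S\cup\{x\})$ forces $\mathcal{K}^{T^\infty}_{\vec e}$ uniformly dense below $p$, witnessed by $x$, the $(2n_\Xi)$-branching set $U^*$, and $z$ (note $z\geq\zeta\geq\max_{g\in U^*}\z^{T^\infty}_{\mathcal{K}_{\vec e}}(g)$). For any generic $T^\infty\preceq(T^*,S\cup\{x\})$, every infinite path $\Lambda$ through $T^\infty$ restricts to a maximal‑length string of $T^*$, so $\Lambda(c)\neq\Lambda(a)$ whenever $a\in D_0$ and $c\in Y_a^*$; thus $Y_a^*\subseteq Y_a(T^\infty)$ for each $a$. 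Since $\mathcal{K}^{X,+}(x,z,\{Y_a\})$ is monotone in the $Y_a$ and the witnessing computations persist under end‑extension, $U^*\subseteq\mathcal{K}^{T^\infty,+}_{\vec e}(x,z,\{Y_a(T^\infty)\})$, and together with $U^*\subseteq\mathcal{K}^{T^\infty,-,0}_{\vec e}(x)$ and $z\geq\z^{T^\infty}_{\mathcal{K}_{\vec e}}$ this gives $U^*\subseteq\mathcal{K}^{T^\infty,+}_{\vec e}(x,z)$, as required. This last step is the crux: unlike the $\dDNR$ ground forcing—where recording a value $V_0(x)$ and sliding a restraint sufficed—here a finite commitment must permanently pin down enough of $Y_a(T^\infty)$ for \emph{every} infinite path, which is precisely what the uniform reflection pattern produced by Lemma~\ref{thm:sym_breaking} (together with the partition clause~(4) of Definition~\ref{defn.RKL-Requirement} and the symmetry built into the forcing) delivers. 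The remaining details—choosing $x$, $z$, and the successive heights of $T'$ and $T''$ large enough that the finitely many computations are not injured by later end‑extension and that ``the first set enumerated'' in clause~(2) of essentialness is locked in—are routine.
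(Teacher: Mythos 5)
Your proof is correct and follows essentially the same route as the paper's own proof: the two questions (first for a $(6n_\Xi-2)$-branching set in $\mathcal{K}^{-,0}$, then for nested $(4n_\Xi-1)$-branching sets in $\mathcal{K}^{-,1}$), the ``not essential'' fallback in each negative case, and the application of Lemma~\ref{thm:sym_breaking} via clause~(4) of Definition~\ref{defn.RKL-Requirement}. Your write-up is a bit more explicit than the paper's about locking in ``the first set enumerated'' and about why $Y_a^*\subseteq Y_a(T^\infty)$ persists under end-extension, and you correctly note that $\bigcup_{g\in U'}U_g$ is $(4n_\Xi-1)$-branching (the paper's ``$4n_\Xi$'' there appears to be a slip), but these are refinements of the same argument rather than a different approach.
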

\begin{proof}
If any $(T',S')\preceq(T,S)$ forces either of the first two cases, we are finished, so assume not.  Let $x$ be the smallest number larger than $|T|$ and any element in $S$.  We ask:
\begin{quote}
Is there any $T'\preceq(T,S\cup\{x\})$ such that there is a $6n_\Xi-2$-branching set of extensions $U$ of $f$ in $\mathcal{K}_{\vec e}^{T',-,0}(x)$?
\end{quote}

Suppose not.  Then $(T,S\cup\{x\})$ forces that $\mathcal{K}^{T^\infty}_{\vec e}$ is not essential.

Suppose so, and let $U$ be the first such set enumerated.  Without loss of generality, we may assume that $|T'|\geq\max_{g\in U}\z^{T'}_{\mathcal{K}_{\vec e}}(g)$ for each $g\in U$.  Choosing $z$ to be larger than $x$ and $|T'|$, we ask:
\begin{quote}
  Is there any $T''\preceq(T',S\cup\{x,z\})$ such that there is a $4n_\Xi-1$-branching $U'\subseteq U$ such that for each $g\in U'$, there is a $4n_\Xi-1$-branching set of extensions of $g$, $U_g\subseteq\mathcal{K}^{T'',-,1}_{\vec e}(x,z)$?
\end{quote}

Suppose not, and recall that $z> |T'|\geq\max_{g\in U}\z^{T'}_{\mathcal{K}_{\vec e}}(g)$.
Thus, by Definition \ref{defn.RKL-essential-and-dense}, we see $(T',S\cup\{x,z\})$ forces that $\mathcal{K}^{T^\infty}_{\vec e}$ is not essential.

Suppose so, and let $y'$ be large enough to witness the convergence of all necessary computations.  Without loss of generality, we may assume that $|T''|\geq y'$.  Then $U''=\bigcup_{g\in U'}U_g$ is a $4n_\Xi$-branching set of extensions of $g$.  Because property (4) of the definition of a requirement satisfies the assumptions of Lemma \ref{thm:sym_breaking}, we may apply it to obtain a $T^*\preceq (T'',S\cup\{x\})$ and a $2n_\Xi$-branching $U^*$ so that for each $\sigma\in T^*$ with $|\sigma|=|T^*|$, $U^*\subseteq\mathcal{K}^{T^*,+}_{\vec e}(x,z,\{Y_a(\sigma)\})$.  This shows that $\mathcal{K}^{T^*}_{\vec e}$ is uniformly dense below $p$.
\end{proof}
In this proof, the symmetry over $x$ is not really important.  We need symmetry in extensions above the elements of $S$, so it is convenient to fix a value $x$ and assume symmetry over $x$.  The important point is symmetry over $z$---we assume this symmetry, and if we find the desired sets $U_g$, we ``spend,'' or ``sacrifice,'' this symmetry to force uniform density.  

In other words, $T''$ contains the four possible configurations $\tau_0{}^\frown\tau_1$, $\tau_0{}^\frown(1-\tau_0)$, $(1-\tau_0){}^\frown\tau_1$, and $(1-\tau_0){}^\frown(1-\tau_1)$.  When we extend to $T^*$, we kill off two of these configurations and the symmetry over $z$ while retaining the other two, and therefore retaining symmetry above $x$ (and also above the values in $S$, which are all $\leq x$).

\begin{theorem}\label{thm:rkl_main}
There is a Turing ideal $\mathcal{I}$ satisfying $\DNR$ but not $\RKL$.
\end{theorem}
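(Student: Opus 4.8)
The plan is to combine the ground forcing of this subsection with the iterated forcing of Section~\ref{sec:rkl} in exactly the way Section~\ref{sec:wwkl} combined its ground and iterated forcings for the $d\mhyphen\DNR$ separation.

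First I would run the ground forcing to build $T^\infty$. Fix an enumeration of all pairs $(\vec e,p)$ where $\vec e=e_0,e_1,e_2,e_3$ and $p=(f,n_\Xi,\Xi)$ is a potential condition of $\mathbb{P}^{T^\infty}$, and build a descending sequence $(T^0,S^0)\succeq(T^1,S^1)\succeq\cdots$ by applying Lemma~\ref{lemma.groundforcing} once for each such pair, interleaving ``trivial'' steps that extend the height of the current tree by one while preserving symmetry across $S$ (adjoin $\sigma{}^\frown\langle b\rangle$ for every maximal $\sigma\in T$ and every $b\in\{0,1\}$; symmetry across $S$ is preserved since every $x\in S$ either satisfies $x\geq|T|$, in which case reflection above $x$ is trivial on sequences of length $\leq|T|+1$, or $x<|T|$, in which case the reflection of a new maximal node is again a new maximal node). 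Set $T^\infty=\bigcup_n T^n$. The interleaved steps guarantee that $T^\infty$ is an infinite $\{0,1\}$-tree. For each $(\vec e,p)$, the corresponding step guarantees that either $\mathcal{K}^{T^\infty}_{\vec e}$ is not an $\mathrm{RKL}^{T^\infty}$-requirement, or $p$ is not a condition of $\mathbb{P}^{T^\infty}$, or $\mathcal{K}^{T^\infty}_{\vec e}$ is uniformly dense below $p$; ranging over all pairs, every requirement $\mathcal{K}^{T^\infty}_{\vec e}$ is uniformly dense.

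Next I would iterate Theorem~\ref{thm:rkl_iterate}. Put $X_0=T^\infty$. Given $X_n$ with $T^\infty\leq_T X_n$ and every $\mathcal{K}^{X_n}_{\vec e}$ uniformly dense (true for $n=0$), Theorem~\ref{thm:rkl_iterate} yields a $\mathrm{DNR}^{X_n}$ function $f^\infty_n$ such that no $\Phi^{X_n\oplus f^\infty_n}_e$ is a solution to the $\RKL$ instance $T^\infty$ and every $\mathcal{K}^{X_n\oplus f^\infty_n}_{\vec e}$ is uniformly dense; set $X_{n+1}=X_n\oplus f^\infty_n$ and continue. Let $\mathcal{I}=\{Z:\exists n\ Z\leq_T X_n\}$. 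I then check the three required properties. $\mathcal{I}$ is a Turing ideal: it is closed under $\leq_T$ by definition, and since $X_n\leq_T X_{n+1}$ any finitely many members of $\mathcal{I}$ lie below a common $X_n$, so $\mathcal{I}$ is closed under finite joins. $\mathcal{I}\models\DNR$: if $Z\in\mathcal{I}$ then $Z\leq_T X_n$ for some $n$, and since $f^\infty_n$ is $\mathrm{DNR}^{X_n}$ a routine use of the s-m-n theorem produces a $\mathrm{DNR}^Z$ function computable from $f^\infty_n$, which lies in $\mathcal{I}$ because $f^\infty_n\leq_T X_{n+1}$. Finally $\mathcal{I}\not\models\RKL$: $T^\infty\in\mathcal{I}$ is an infinite tree, and no $W\in\mathcal{I}$ is a solution to the $\RKL$ instance $T^\infty$, since $W\leq_T X_{n+1}=X_n\oplus f^\infty_n$ for some $n$ (increasing $n$ if necessary), hence $W=\Phi^{X_n\oplus f^\infty_n}_e$ for some $e$, and by the choice of $f^\infty_n$ no such functional is a solution to $T^\infty$ --- in particular this also handles $W\leq_T T^\infty$, since a functional ignoring its second oracle is a special case.

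I do not expect a serious obstacle here: all the genuine combinatorics has already been carried out in Lemma~\ref{thm:sym_breaking}, Lemma~\ref{lemma.groundforcing}, and Theorem~\ref{thm:rkl_iterate}. The only points that demand care are organizational --- arranging the ground-forcing enumeration so that every index $\vec e$ and every potential condition of $\mathbb{P}^{T^\infty}$ is eventually processed, verifying that the interleaved height-extension steps really do keep $T^\infty$ infinite without destroying symmetry, and recording the standard observation that being $\mathrm{DNR}$ relative to $X_n$ uniformly yields $\mathrm{DNR}$ relative to every set Turing below $X_n$ so that the single sequence $(f^\infty_n)$ witnesses $\DNR$ throughout the ideal.
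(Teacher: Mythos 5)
Your proof takes exactly the same route as the paper: run the ground forcing of Lemma~\ref{lemma.groundforcing} to obtain a $T^\infty$ for which every requirement $\mathcal{K}^{T^\infty}_{\vec e}$ is uniformly dense, then iterate Theorem~\ref{thm:rkl_iterate} to build the ideal. The extra details you supply --- interleaving height-extension steps so that $T^\infty$ is infinite, and the s-m-n downward transfer of $\mathrm{DNR}$ --- are correct and are left implicit in the paper's more compressed presentation.
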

\begin{proof}
  Applying the previous lemma repeatedly, we may construct a $T^\infty$ so that every requirement $\mathcal{K}^{T^\infty}$ is uniformly dense.  By Theorem \ref{thm:rkl_iterate} we may find a $\mathrm{DNR}^{T^\infty}$ function $f^\infty$ so that no $\Phi^{T^\infty\oplus f^\infty}_e$ is  a solution to the \RKL instance $T^\infty$ and every requirement $\mathcal{K}^{T^\infty\oplus f^\infty}$ is uniformly dense.   Iterating this countably many times and closing under computability, we obtain a Turing ideal $\mathcal{I}$ containing $T^\infty$, containing no solution to the \RKL{} instance $T^\infty$, and so that for every $W\in\mathcal{I}$, $\mathcal{I}$ contains a $\mathrm{DNR}^W$ function.
\end{proof}

\begin{cor}
\DNR{} does not imply \RKL.
\end{cor}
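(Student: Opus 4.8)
The plan is to read the corollary off directly from Theorem~\ref{thm:rkl_main}, using the standard passage from an $\omega$-model separation to a non-implication over \RCA{}. First I would apply Theorem~\ref{thm:rkl_main} to fix a Turing ideal $\mathcal{I}$ that satisfies \DNR{} but not \RKL{}; unwinding this, $\mathcal{I}$ contains a computable instance $T^\infty$ of \RKL{} admitting no solution in $\mathcal{I}$, while for every $W\in\mathcal{I}$ there is a $\mathrm{DNR}^W$ function in $\mathcal{I}$.

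Next I would observe that a countable Turing ideal determines an $\omega$-model $\mathcal{M}=(\omega,\mathcal{I})$, with the standard arithmetic on the first-order part. Because $\mathcal{I}$ is nonempty, downward closed under $\leq_T$, and closed under $\oplus$, the model $\mathcal{M}$ satisfies $\Delta^0_1$-comprehension, and $\Sigma^0_1$-induction holds automatically in every $\omega$-model; hence $\mathcal{M}\models\RCA$. Since the truth in an $\omega$-model of the second-order statements \DNR{} and \RKL{} is determined entirely by the second-order part, we conclude $\mathcal{M}\models\DNR$ (for every $W\in\mathcal{I}$ there is a $\mathrm{DNR}^W$ function in $\mathcal{I}$) and $\mathcal{M}\not\models\RKL$ (the tree $T^\infty\in\mathcal{I}$ has no solution in $\mathcal{I}$).

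Finally, $\mathcal{M}$ is a model of $\RCA+\DNR+\neg\RKL$, which witnesses $\RCA+\DNR\not\vdash\RKL$; that is, \DNR{} does not imply \RKL{} over the base theory \RCA{}.

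All of the genuine content sits upstream, in Theorem~\ref{thm:rkl_main}, whose proof rests in turn on the ground-forcing Lemma~\ref{lemma.groundforcing}, the iteration Theorem~\ref{thm:rkl_iterate}, and the symmetry-breaking Lemma~\ref{thm:sym_breaking}. The step recorded here is routine and presents no real obstacle; the only point worth checking is that the instance $T^\infty$ produced by the ground forcing is genuinely computable, so that it lies in the Turing ideal constructed over it, which is immediate from the way the ground forcing is set up.
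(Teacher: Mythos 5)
Your proposal is correct and takes the only natural route: the corollary follows from Theorem~\ref{thm:rkl_main} by the standard passage from a Turing ideal to an $\omega$-model of $\RCA$, and the paper records no separate proof beyond this. One small correction to your closing remark: the forcing construction of $T^\infty$ is \emph{not} guaranteed to produce a computable tree, and this is not the reason $T^\infty$ lies in $\mathcal{I}$. Rather, the ideal $\mathcal{I}$ is built \emph{on top of} $T^\infty$---the proof of Theorem~\ref{thm:rkl_main} explicitly says ``we obtain a Turing ideal $\mathcal{I}$ containing $T^\infty$''---so membership is by construction, regardless of the complexity of $T^\infty$. Computability of the \RKL{} instance is a genuine strengthening and is obtained only via the priority argument in the subsection following Theorem~\ref{thm:rkl_main}; it is not needed for the corollary. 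This is a misconception rather than a gap: the step you flag as ``the only point worth checking'' is not actually required, and the argument goes through without it.
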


\subsection{Priority Construction for \texorpdfstring{$T^\infty$}{T}}

Above and in \cite{LST} the construction of the difficult instance of a problem is given by a forcing arugment.  As noted in \cite{LST}, this forcing can be replaced by a priority argument.  The argument is slightly more complicated, but has the benefit that it shows that the instance of \RKL{} can be taken to be computable.  For completeness (and to illustrate how the conversion can be done for all arguments of this kind) we illustrate this with the construction of $T^\infty$.

We take the countably many values of $\vec e$ and $p$ and arrange them in a priority list of order type $\omega$; we call the data associated to the $i$-th requirement in this list $(\mathcal{K}^{T^\infty}_{\vec e_i},f_i,n_i,\Xi_i)$.  We construct the tree $T^\infty$ in infinitely many stages; at the $n$-th stage, we have a finite tree $T_n$.  We also maintain, at each stage, a set $S_n$.

At each stage, a requirement can be in one of four states, which we call $I,0,1,+$; initially all requirements are in the $I$ stage.  A requirement can move from $I$ to $0$, from $0$ to $1$, and from $1$ to $+$; when a requirement changes states it may injure all lower priority requirements, reseting their state to $I$.

We will define below what it means for a requirement to need attention at stage $n$, and what we do when a requirement needs attention.  Given $T_n$ and the associated date, we define $T_{n+1}$ by finding the least $i\leq n$ which needs attention and apply the corresponding operations.

A requirement $i$ in state $I$ always needs attention.  When it receives attention at stage $n$ we define a value $x_i$ to be larger than any value in $S_n$ and let $S_{n+1}=S_{n}\cup\{x_i\}$.  We move this requirement to state $0$.  All other requirements are untouched.  We define $T_{n+1}$ to consist of all $\sigma^\frown\langle b\rangle$ with $\sigma\in T_n$, $|\sigma|=|T_n|$, and $b\in\{0,1\}$.  (All lower priority requirements are in state $I$, so we do not need to worry about injuring them.)

A requirement in state $0$ needs attention at stage $n$ if there exists a $6n_i-2$-branching set of extensions $U_i$ of $f_i$ so that for each $g\in U$, $g$ is coded by a value $\leq n$ and there is a $y\leq n$ so that $\exists g'\subseteq g R_{e_0}^{T_n,-,0}(x_i,y,g')$.  (That is, $U\subseteq\mathcal{K}^{T_n,-,0}_{\vec e_i}(x_i)$ with all necessary witnesses bounded by $n$.)  We take the first such $U$.  We choose a value $z_i\geq\bigcup_{g\in U}\z^{T_n}_{\mathcal{K}_{\vec e_i}}(g)\cup S_n$.  We let $S_{n+1}=S_n\cap[0,x_i]\cup\{z_i\}$.  We injure all lower priority requirements.  (Any requirement which had $x_{i'}>x_i$ or $z_{i'}>x_i$ would have to be injured; our construction ensures that all such requirements are indeed lower priority.)  We move this requirement to state $1$.

A requirement in state $1$ needs attention at stage $n$ if there exists a $4n_i-1$-branching $U'\subseteq U_i$ so that for each $g\in U'$ there is a $4n_i-1$-branching set of extensions $U_g$ of $g$ so that for each $g'\in U_g$, $g'$ is coded by a value $\leq n$ and there is a $y\leq n$ so that $\exists g''\subseteq g' R_{e_1}^{T_n,-,1}(x_i,z_i,y,g'')$.  As above, we apply Lemma \ref{thm:sym_breaking} to obtain an extension $T_{n+1}=T^*$ of $T_n$ so that there is a $2n_i$-branching $U^*$ so that for each $\sigma\in T_{n+1}$ with $|\sigma|=|T_{n+1}|$, $U^*\subseteq\mathcal{K}^{T_{n+1},+}_{\vec e}(x_i,z_i,\{Y_a(\sigma)\})$.  We set $S_{n+1}=S_n\cap[0,x_i]$; this injures all lower priority requirements.

Observe that this is a finite injury construction: a given requirement is only injured when a higher priorty requirement moves from $0$ to $1$ or from $1$ to $+$.  By an easy induction, each requirement eventually stabilizes in some stage and eventually receives attention if it needs attention.

We check that the result forces each $\mathcal{K}^{T^\infty}_{\vec e}$ to be uniformly dense.  If $\mathcal{K}^{T^\infty}_{\vec e_i}$ is essential below $p_i$ then consider the $x_i$ chosen by the requirement $i$ the last time it entered state $0$.  Being essential means $i$ needed attention at some large enough state $n$ after the last time it entered stage $0$, and therefore eventually received attention and moved to state $1$.  At this time some value $z_i$ was fixed, and being essential means $i$ needed attention again at some later stage.  At this stage $i$ was moved to state $+$, and the construction of $T_{n+1}$ in state $+$ ensures that $\mathcal{K}^{T^\infty}_{\vec e_i}$ is uniformly dense below $p_i$.  Since each pair $\vec e,p$ is $\vec e_i,p_i$ for some $i$, this completes the claim.

We conclude with a slight strengthening of Theorem \ref{thm:rkl_main}:
\begin{theorem}
  There is a computable tree $T^\infty$ and a Turing ideal $\mathcal{I}$ satisfying \DNR{} so that $\mathcal{I}$ contains no solution to $T^\infty$ as an \RKL{} instance.
\end{theorem}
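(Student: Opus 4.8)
The plan is to replace the forcing construction of $T^\infty$ from Lemma~\ref{lemma.groundforcing} by a finite-injury priority argument — exactly along the lines sketched after that lemma — and then feed the resulting computable tree into the iteration machinery of Theorem~\ref{thm:rkl_iterate}. First I would fix a computable enumeration of all pairs $(\vec e,p)$, with $\vec e=e_0,e_1,e_2,e_3$ and $p=(f,n_\Xi,\Xi)$ a potential condition, in order type $\omega$; the $i$-th entry carries the data $(\mathcal{K}^{T^\infty}_{\vec e_i},f_i,n_i,\Xi_i)$. The construction produces finite $\{0,1\}$-trees $T_0\subseteq T_1\subseteq\cdots$ together with finite ``symmetry-coordinate'' sets $S_n$, and assigns to each requirement one of the four states $I,0,1,+$ that mirror the three successive case-splits in the proof of Lemma~\ref{lemma.groundforcing}: state $I$ means no coordinate is chosen yet; state $0$ means $x_i$ is fixed and we are waiting for a $6n_i-2$-branching $U\subseteq\mathcal{K}^{T_n,-,0}_{\vec e_i}(x_i)$ to appear with all witnesses bounded by the current stage; state $1$ means $z_i$ is fixed and we are waiting to bootstrap from $\mathcal{K}^{-,0}$ to $\mathcal{K}^{-,1}$; and state $+$ means the symmetry over $z_i$ has been spent via Lemma~\ref{thm:sym_breaking} to force $\mathcal{K}^{T^\infty}_{\vec e_i}$ uniformly dense below $p_i$. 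At each stage we serve the least requirement that needs attention, where ``needs attention'' is the decidable event that the relevant branching set has been observed with bounded witnesses; moving a requirement from $0$ to $1$ or from $1$ to $+$ resets (injures) all lower-priority requirements to state $I$.

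Next I would carry out the two standard verifications. The injury is finite by the usual induction: requirement $0$ is never injured, so it changes state at most three times and then stabilizes; and once requirements $0,\dots,i-1$ have stabilized, requirement $i$ is injured only finitely often and hence stabilizes as well, receiving attention whenever it permanently needs it. The limit tree $T^\infty=\bigcup_n T_n$ is computable precisely because ``needs attention'' is decidable at each stage. To see that every $\mathcal{K}^{T^\infty}_{\vec e}$ is uniformly dense, suppose $\mathcal{K}^{T^\infty}_{\vec e_i}$ is essential below $p_i$; after requirement $i$'s last entry into state $0$ with coordinate $x_i$, essentialness forces the state-$0$ trigger to fire, so $i$ moves to state $1$ with some $z_i$; essentialness forces the state-$1$ trigger as well, so $i$ moves to state $+$, and the construction in state $+$ is exactly the conclusion of Lemma~\ref{thm:sym_breaking}, which is uniform density of $\mathcal{K}^{T^\infty}_{\vec e_i}$ below $p_i$. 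Since every $(\vec e,p)$ occurs as $(\vec e_i,p_i)$, all requirements $\mathcal{K}^{T^\infty}_{\vec e}$ are uniformly dense.

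Finally, with this computable $T^\infty$ fixed, I would apply Theorem~\ref{thm:rkl_iterate} with $X=T^\infty$ to obtain a $\mathrm{DNR}^{T^\infty}$ function $f^\infty$ such that no $\Phi^{T^\infty\oplus f^\infty}_e$ is a solution to $T^\infty$ and every $\mathcal{K}^{T^\infty\oplus f^\infty}_{\vec e}$ remains uniformly dense; iterating this $\omega$ many times over an enumeration of the sets to be added, and closing under join and Turing reducibility, yields a Turing ideal $\mathcal{I}\ni T^\infty$ that models $\DNR$ and contains no solution to the $\RKL$ instance $T^\infty$.

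The main obstacle I expect is purely bookkeeping: checking that the injury pattern is consistent, i.e.\ that whenever a requirement changes state, the only requirements whose symmetry constraints recorded in $S_n$ could be violated by the new extension are of strictly lower priority (this is what dictates how $x_i$ and $z_i$ must be chosen relative to the current $S_n$), and that the resets of the $\Xi_i$-data and the recomputation of branching sets after injury introduce no circularity. Everything else is a direct transcription of the three-phase argument of Lemma~\ref{lemma.groundforcing} into the priority framework, together with the routine observation that ``needs attention'' is decidable, which is exactly what makes $T^\infty$ computable.
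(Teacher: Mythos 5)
Your proposal is essentially identical to the paper's own argument: the paper proves this theorem in the ``Priority Construction for $T^\infty$'' subsection by exactly the four-state ($I,0,1,+$) finite-injury construction you describe, with $x_i$ and $z_i$ chosen above $S_n$, injury triggered by the $0\to 1$ and $1\to +$ transitions, and Lemma~\ref{thm:sym_breaking} applied in state $+$, then feeds the resulting computable $T^\infty$ into Theorem~\ref{thm:rkl_iterate} and closes off to get the Turing ideal. The ``bookkeeping obstacle'' you flag is also noted in the paper (choosing $x_i,z_i$ above $S_n$ ensures that any requirement whose symmetry coordinate would be disturbed is of strictly lower priority), so there is no gap.
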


\section{Positive Implications}\label{sec:positive}

\begin{definition}
A \emph{tournament} is a binary relation $\rightarrow$ on $S\subseteq\mathbb{N}$ such that for every pair $x\neq y$, exactly one of $x\rightarrow y$ and $x\leftarrow y$ holds.  $\rightarrow$ is \emph{transitive} if $x\rightarrow y$ and $y\rightarrow z$ implies $x\rightarrow z$.

  \EM{} states that whenever $\rightarrow$ is a tournament there is an infinite set $H$ such that $\rightarrow$ is transitive on $H$.

A tournament is \emph{stable} if for every $x$, either $x\rightarrow y$ for cofinitely many $y$, or $x\leftarrow y$ for cofinitely many $y$.  \SEM{} states that whenever $\rightarrow$ is a stable tournament there is an infinite set $H$ such that $\rightarrow$ is transitive on $H$.
\end{definition}

\begin{theorem}
  \SEM{} implies \RKL.
\end{theorem}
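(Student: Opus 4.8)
The plan is to turn an arbitrary instance of $\RKL$ into an instance of $\SEM$, apply $\SEM$, and read an $\RKL$‑solution off of the resulting transitive subtournament; this also strengthens the known implication $\EM\to\RKL$, since a stable tournament is in particular a tournament.

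Fix an infinite tree $T\subseteq 2^{<\omega}$. Since $T$ is infinite and closed under initial segments, $T_n\neq\emptyset$ for every $n$, and $T_n$ is $T$‑computable (it is $\{\sigma\in 2^n:\sigma\in T\}$); in particular one can $T$‑computably single out the leftmost string $\ell_n\in T_n$. The first step is a reformulation of the goal: call a finite $F\subseteq\mathbb N$ \emph{$T$‑extendible} if $\{\sigma\in T:\sigma\text{ is constant on }F\}$ is infinite, equivalently if for every length there is a longer $\sigma\in T$ constant on $F$. Because constancy on a fixed finite set is inherited by initial segments, an infinite set $H$ is a solution to the $\RKL$‑instance $T$ if and only if every finite subset of $H$ is $T$‑extendible. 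So it suffices to produce an infinite set all of whose finite subsets are $T$‑extendible.

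The core step is to build a $T$‑computable \emph{stable} tournament $\to$ on $\mathbb N$ whose infinite transitive subtournaments are exactly sets of that kind. The orientation of a pair $\{x,y\}$ with $x<y$ should be read off from $T$ below a level computable from $x$ and $y$ (for instance from the leftmost strings $\ell_m$ at levels $m$ near $y$), chosen so as to record, asymmetrically, toward which common value $x$ and $y$ are being driven along these approximations. Two things must then be verified: (i) \emph{stability} --- for each fixed $x$, whether $x\to y$ holds settles to a constant as $y\to\infty$; and (ii) \emph{transitive implies extendible‑closed} --- if $H$ is $\to$‑transitive, so that $\to$ linearly orders $H$, then each finite $F\subseteq H$ is $T$‑extendible, which one argues by working along the $\to$‑order of $F$ and showing that (arbitrarily long) strings of $T$ attached to the $\to$‑extremal element of $F$ are forced by transitivity to be constant on $F$. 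I expect (i) to be the main obstacle, because the leftmost approximations $\ell_m$ need not converge, so the naive orientation need not be stable; the remedy I would pursue is to replace the naive rule by a stage‑by‑stage process that commits irrevocably to each orientation, arranged so that only finitely many commitments concerning any fixed $x$ are ever revised, and carried out so as to tolerate dead ends in $T$ (which $\RCA_0$ does not permit us to prune away in advance). Getting the bookkeeping to secure stability while still guaranteeing property (ii) is where the real work lies.

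Finally, apply $\SEM$ to the stable tournament $\to$ to obtain an infinite transitive set $H$. By (ii) every finite subset of $H$ is $T$‑extendible, so by the reformulation $H$ solves the $\RKL$‑instance $T$. As $T$ was arbitrary, $\SEM$ implies $\RKL$ over $\RCA_0$.
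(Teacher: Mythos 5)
Your high‑level plan (encode $T$ as a stable tournament using the lex‑leftmost approximations, apply $\SEM$, read off an $\RKL$‑solution) is the same as the paper's, but you have misplaced where the difficulty lies, and the step you wave at as routine is the one that actually requires a nontrivial argument.

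First, stability is \emph{not} the obstacle you think it is. Let $\sigma_y$ be the lex‑leftmost string of length $y$ in $T$. For $y'>y$, the restriction $\sigma_{y'}\upharpoonright y$ lies in $T_y$, hence is lex‑$\geq\sigma_y$; so the finite prefixes $\sigma_y\upharpoonright[0,x]$ are lex‑nondecreasing in $y$ and, taking only $2^{x+1}$ values, must stabilize. Thus $\lim_y\sigma_y(x)$ exists for every $x$, and the naive tournament $x\rightarrow y\iff\sigma_y(x)=1$ (for $x<y$) is already stable in $\RCA_0$. The injury machinery you propose to ``secure stability'' is unnecessary, and your worry that ``the leftmost approximations $\ell_m$ need not converge'' is simply wrong.

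Second, and this is the genuine gap, your claim (ii) is false as stated: transitivity of $\rightarrow$ on $H$ does \emph{not} force arbitrarily long $\sigma\in T$ to be \emph{constant} on finite $F\subseteq H$. What transitivity gives is only \emph{order‑preservation}: for any finite $F\subseteq H$ there are arbitrarily long $\sigma\in T$ such that $x\leftarrow x'$ (for $x,x'\in F$) implies $\sigma(x)\leq\sigma(x')$, i.e.\ $\sigma\upharpoonright F$ is monotone for the $\leftarrow$‑linear order, which allows a single $0$‑to‑$1$ jump and hence need not be constant. Your remark about the ``$\to$‑extremal element of $F$'' does not close this; the transitive set itself is not in general an $\RKL$‑solution. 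The paper's proof has to do real work here: it passes to the subtree $T'\subseteq T$ of order‑preserving strings, partitions $H$ (there called $S$) into $\omega(S)$ (finitely many $\leftarrow$‑predecessors) and $\omega^*(S)$ (finitely many $\leftarrow$‑successors), and splits into cases. If some $x\in S\setminus\omega^*(S)$ has $\sigma(x)=1$ along arbitrarily long $\sigma\in T'$, then the infinite set $\{y\in S: y\rightarrow x\}$ is forced to be constantly $1$ along those $\sigma$ and serves as the solution (symmetrically for $S\setminus\omega(S)$ with value $0$); in the remaining case one shows $\omega(S)$ and $\omega^*(S)$ are computable from $S$ and $T'$ and that whichever of them is infinite is the solution. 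None of this is ``bookkeeping'' — it is the content of the theorem, and your outline leaves it unaddressed.

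So: correct starting move, wrong diagnosis of where the effort goes, and the step you defer is precisely where the proof either succeeds or fails. Before trusting the plan, you should write down exactly what transitivity buys you (order‑preservation, not constancy) and then supply an argument for extracting a constant‑valued infinite subset from the transitive set, as the paper does.
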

This was also shown independently in \cite{RKL-Variants}.
\begin{proof}
Our proof adapts the argument from \cite{RKL} that \SRT{} implies \RKL.  Let $T$ be an infinite tree of $\{0,1\}$ sequences---that is, an instance of \RKL.  For any $y$, let $\sigma_y$ be the lexicographically leftmost sequence in $T$ of length $y$.  Then for any pair $(x,y)$ with $x<y$, we set $x\leftarrow y$ if $\sigma_y(x)=0$ and $x\rightarrow y$ if $\sigma_y(x)=1$.  For any $x$, $\lim_y\sigma_y(x)$ exists, so the tournament is stable.

By \SEM, we have an infinite set $S$ on which $\rightarrow$ is transitive.  This has the following consequence: for any finite $S_0\subseteq S$, there are arbitrarily long $\sigma\in T$ such that $x\leftarrow x'$ with $x,x'\in S_0$ implies $\sigma(x)\leq\sigma(x')$.  In other words, viewing $\leftarrow$ as a linear ordering on $S$, for any finite subset $S_0$ we find long $\sigma$ so that $\sigma:S_0\rightarrow\{0,1\}$ is order preserving.

Let $T'\subseteq T$ consist of those $\sigma\in T$ such that $\sigma$ is order preserving on $S\cap[0,|\sigma|)$; $T'$ must also contain arbitrarily long sequences and is computable from $S$.

Let's define $\omega(S)$ to be those $x\in S$ such that there are only finitely many $y\in S$ with $y\leftarrow x$ (that is, those $x$ with finitely many predecessors).  Similarly, define $\omega^*(S)$ to be those $x\in $ so there are only finitely many $y\in S$ with $y\rightarrow x$.

Suppose that for some $x\in S\setminus \omega^*(S)$ we have $\sigma(x)=1$ for arbitrarily large $\sigma\in T'$.  Then let $S^*=\{y\in S\mid y\rightarrow x\}$.  Whenever $\sigma(x)=1$ for $\sigma\in T'$, also $\sigma(y)=1$ for all $y\in S^*$, so $S^*$ is a solution to $T'$ as an instance of \RKL, so also to $T$.  Similarly, if there is an $x\in S\setminus \omega(S)$ so that $\sigma(x)=0$ for arbitrarily large $\sigma\in T'$ we could similarly use $\{y\mid y\leftarrow x\}$.

So consider the remaining case.  Clearly $S=\omega(S)\cup\omega^*(S)$ and for any $x\in \omega(S)$, every sufficiently long $\sigma\in T'$ makes $\sigma(x)=0$ while for any $x\in\omega^*(S)$, every sufficiently long $\sigma\in T'$ makes $\sigma(x)=1$.  Then $\omega(S)$ and $\omega^*(S)$ are computable: given $x\in S$, there must be some $n$ and some $b\in\{0,1\}$ so that for every $\sigma\in T'$ with $|\sigma|=n$, $\sigma(x)=b$.  Then $x\in\omega(S)$ iff $b=0$.  At least one of $\omega(S),\omega^*(S)$ is infinite, and there are arbitrarily long $\sigma\in T'\subseteq T$ so $\sigma(x)=0$ for $x\in\omega(S)$ and $\sigma(x)=1$ for $x\in\omega^*(S)$.  Therefore whichever of these sets is infinite is a solution to $T$ as an instance of \RKL.
\end{proof}

The following result has been shown by Bienvenu, Patel, and Shafer \cite{RKL-Variants}.  We give an alternate direct proof:
\begin{theorem}
  \DNR{} implies \WRKL{}.
\end{theorem}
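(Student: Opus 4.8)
## Proof Plan for $\DNR$ implies $\WRKL$

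\textbf{The setup.} The plan is to show that $\DNR$ suffices to find, for any tree $T$ with $|T_n| \geq \epsilon 2^n$ for all $n$, an infinite set $H$ such that for every $n$ there is a $\sigma \in T_n$ with $\sigma\restriction H$ constant. I would exploit the fact, cited earlier, that there is a computable $d$ such that $\WWKL$ implies $d\mhyphen\DNR$, and more usefully that a $\DNR$ function of sufficiently fast growth suffices to compute "probabilistic" objects. The cleaner route, though, is to work directly: a $\DNR$ function relative to $T$ gives us enough diagonalization power to beat the (computably enumerable) list of "bad" finite sets, those finite sets $F$ which are \emph{not} extendible to a homogeneous-type solution. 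So the first step is to make precise the combinatorial core: for a positive-measure tree $T$, the collection of finite sets $F$ such that \emph{no} level $T_n$ contains a string constant on $F$ is, in an appropriate sense, "small" — in fact a single application of the probabilistic method (or a counting/entropy argument) shows that random finite sets of a given size are good with probability bounded below.

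\textbf{Key steps.} First I would set up the counting estimate: fix the positive lower density $\epsilon$. For a finite set $F \subseteq [0,n)$ with $|F| = k$, call $F$ \emph{$n$-bad} if every $\sigma \in T_n$ is non-constant on $F$. The number of $\sigma \in \{0,1\}^n$ that are constant on a fixed $F$ of size $k$ is $2^{n-k+1}$, so a union bound over the $2^k - 2$ "patterns" is not quite what's needed; instead, averaging over random $F$ of size $k$, the expected fraction of $T_n$ that survives being non-constant behaves like $(1 - 2^{-k+1})^{|T_n|/2^n \cdot (\dots)}$ — the precise estimate shows that for $k$ large relative to $\log(1/\epsilon)$, a positive fraction of size-$k$ sets are good for level $n$, uniformly in $n$. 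Second, I would package the "bad" sets into a c.e.\ (relative to $T$) enumeration and use a $\DNR^T$ function — or rather a $\DNR^T$ function composed with a slow-growing bound so that it becomes effectively a function avoiding a given c.e.\ "trace" of bounded width — to select, level by level, a finite set $G_m \subseteq G_{m+1} \subseteq \cdots$ that is good for all sufficiently many levels, then take $H = \bigcup_m G_m$. The mechanism here is exactly the familiar one: $\DNR$ lets you, given a uniformly c.e.\ sequence of finite sets $S_i$ with $|S_i|$ bounded, produce a point avoiding all of them; the combinatorial estimate guarantees the bad sets form such a bounded-width trace among all sets of the right size.

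\textbf{Formalizing in $\RCA_0$.} Third, I would check the argument goes through over the base theory: the counting estimate is a finitary statement provable in $\RCA_0$ (it is just a bound on cardinalities of finite sets), and the construction of $H$ from a $\DNR^T$ function is a primitive recursive in-the-limit process, hence available. One subtlety is that $\WRKL$ asks only that for \emph{each} $n$ some $\sigma \in T_n$ be constant on $H$ — we do not need a single infinite path — so we never need $\WWKL$-strength; we only need that $H$ be infinite and that at each level the "good" condition hold, which is exactly what the bad-set-avoidance gives at cofinitely many levels, and the finitely many exceptional initial levels can be absorbed by shrinking $H$ to a tail (or handled by noting $T_n$ for small $n$ always contains \emph{some} string, trivially constant on a singleton).

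\textbf{Main obstacle.} The hard part will be pinning down the right combinatorial estimate and the corresponding notion of "bounded-width trace" so that a plain $\DNR^T$ function — not a fast-growing or measure-theoretic object — actually suffices; in particular one must be careful that the width of the c.e.\ trace of bad sets is bounded by a \emph{computable} function of the size parameter $k$, independent of $n$, since that is precisely what lets $\DNR$ (rather than $\WWKL$) do the job. I expect the clean way to arrange this is to fix the growth rate of $k = k(m)$ in advance (growing just fast enough relative to $\log(1/\epsilon)$), so that at stage $m$ we are choosing among size-$k(m)$ extensions and the count of bad extensions is a fixed fraction bounded away from the total — then a $\DNR$ function converted to the appropriate radix picks a good branch at every stage. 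Everything else is bookkeeping.
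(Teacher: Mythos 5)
You correctly identify the crux of the matter --- that $\DNR$, unlike $\WWKL$, can only escape a uniformly c.e.\ array of \emph{boundedly}-sized sets, so the argument must exhibit the ``bad'' choices as such an array --- but the sketch does not actually supply this, and the intermediate combinatorial claim you lean on does not deliver it. The estimate you propose shows only that among the size-$k$ extensions of a given finite $G$, a \emph{fraction} bounded away from zero (roughly $2^{-k+1}$) are good for a given level $n$. That is exactly the kind of statement $\WWKL$ exploits, but it is not a bounded-width trace: the complementary fraction of bad extensions is $\Theta(\binom{n}{k})$ many sets, growing with $n$ and not bounded by any computable function of $k$ alone. A $\DNR^T$ function (without a growth bound) has no way to single out one of the good extensions from among exponentially many candidates when the bad ones are also exponentially many; that power is precisely what separates $\WWKL$ from $\DNR$, which is why the reduction has to be set up so that, on each block of indices, only \emph{constantly many} values ever get enumerated as forbidden.

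There is a second, related gap: once you commit to a finite $G_m$, the relevant strings in $T$ are only those compatible with the chosen $0$/$1$ pattern on $G_m$, and the measure of this subtree can decay with $m$. Your level-by-level counting estimate implicitly assumes the ambient tree keeps measure $\geq\epsilon$ relative to your commitments, which is exactly the point in jeopardy; nothing in the sketch controls this decay. The paper's proof circumvents both difficulties simultaneously by a quite different mechanism. It encodes the candidate homogeneous set as $H = \{b^f_i\}$ where $b^f_i$ is a super-exponential function of $f(0),\dots,f(r(i))$ built from prime powers, chosen so that distinct initial segments of $f$ yield disjoint families of candidate elements. The ``bad'' events are sequences $\vec s$ that get killed ($T\cap\langle\vec s\rangle_n=\emptyset$), and the enumeration $\Psi^T$ outputs at most one forbidden value per killed sequence. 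The heart of the argument is then a measure-theoretic bound: the cylinders $[\langle\vec s^j\rangle]$ associated to distinct killed sequences sharing a prefix are \emph{mutually independent} subsets of a common cylinder, and an application of the finite Ramsey theorem extracts a large homogeneous family of them, forcing $\mu([T])<\epsilon$ if too many $\Psi^T(j)$ get defined on any block $(r(i-1),r(i)]$. It is this independence-plus-Ramsey step that converts the measure hypothesis into the needed bounded-width conclusion (``on each block at least one index is left free''), and there is no analogue of it in your sketch. Without it, the step from ``a positive fraction of $k$-sets is good'' to ``a $\DNR^T$ function can select one'' does not go through.
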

\begin{proof}
If $S\subseteq 2^{<\omega}$ is a set of finite sequences, we write $[S]\subseteq 2^{\omega}$ for the collection of infinite sequences with some initial segment in $S$.  

We are given a tree $T$ so that $[T]$ has measure $\geq\epsilon$.  We will exhibit explicit Turing functionals $\Psi^T$ and $\Upsilon^X$ such that if $f:\omega\rightarrow\omega$ is a $\DNR^{\Psi^T}$ function, $\Upsilon^f$ will compute a solution to $T$.

We describe $\Upsilon^X$ first, since it is quite explicit, and doesn't even depend on $T$.  The idea is that we view $\Upsilon^X$ as a map from $f:\omega\rightarrow\omega$ to an infinite sequence $n\mapsto \Upsilon^f(n)$ in such a way that it requires many values of $f$ to determine $\Upsilon^f(n)$.

We fix a computable function $r(i)$ which is sufficiently quickly growing.  (The exact value can be calculated from the work below.)  Let $p_0,p_1,\ldots$ be the sequence of primes.  If $f$ is a function with $[0,r(i)]\subseteq\dom(f)$, we define $a_i^f=\prod_{j\leq i}p_j^{f(j)}$ and $b_i^f=\prod_{j\leq r(i)}p_j^{a_j^f}$.  We set $\Upsilon^f(i)=b_i^f$.

The main useful feature of these sequences is that if $\vec s^0=s^0_0,\ldots,s^0_{r(i)}$ and $\vec s^1=s^1_0,\ldots,s^1_{r(i)}$ are sequences such that for some $i'\leq i$ and some $j\leq r(i')$, $s^0_j\neq s^1_j$, then $b_i^{\vec s^0}\neq b_i^{\vec s^1}$.  For a given finite sequence $\vec s=s_0,\ldots,s_{r(i)}$, it is natural to focus on the set of numbers which could show up in possible extensions---that is, the set $N^{\vec s}\subseteq\mathbb{N}$ such that $x\in N^{\vec s}$ iff there is some sequence $\vec s'\sqsupseteq\vec s$ so that $x=b^{\vec s'}_{i'}$ for some $i'$.  There is a natural function $\pi_{\vec s}:2^{\omega}\rightarrow 2^{N^{\vec s}}$ given by $\pi_{\vec s}(\Lambda)(i)=\Lambda(i)$ so that $\mu(\pi^{-1}_{\vec s}(S))=\mu(S)$ for any measurable set $S\subseteq 2^\omega$.  We will use measures of the form $\mu(\pi_{\vec s}(S))$ because this lets us focus on the contributions of difference choices of $\vec s$ without entangling their effects.

For any $\vec s=s_0,\ldots,s_{r(i)}$ and $n> b_i^{\vec s}$, let us write
\[\langle \vec s\rangle_n=\{\sigma\in 2^n\mid\forall i_0,i_1\leq i\ \sigma(b^{\vec s}_{i_0})=\sigma(b^{\vec s}_{i_1})\},\]
so $\mu([\langle \vec s\rangle_n])=2^{1-i}$.  Since $[\langle \vec s\rangle_n]=[\langle \vec s\rangle_m]$ for $n,m>b_i^{\vec s}$, we just write $[\langle \vec s\rangle]$ for this set.

We describe an enumeration algorithm for $\Psi^T$---that is, a single algorithm which may, at a given stage, output a value $\Psi^T(j)=s$ for some $j$ where a value has not already been set.  We will also construct a tree $T'\supseteq T$; at the $n$-th stage we will specify which sequences of length $n$ belong to $T'$.

We say a sequence $\vec s=s_0,\ldots,s_{r(i)}$ has been \emph{killed} by stage $n$ if $T\cap\langle \vec s\rangle_n=\emptyset$.  This means that $T$ has ruled out $b^{\vec s}$ as a possible beginning to an infinite branch.  

At the $n$-th stage, $\Psi^T$ \emph{takes notice of} the first $\vec s=s_0,\ldots,s_{r(i)}$ which has been killed by stage $n$ but which we have not taken notice of at a previous stage.  (The exact ordering used does not matter as long as all sequences eventually get considered; a simple choice is to order sequences first by the earliest stage at which they are killed, and then order sequences killed at the same stage lexicographically.)  A sequence of length $n$ belongs to $T'$ exactly if it extends a sequence of length $n-1$ in $T'$ and does not belong to $\langle \vec s\rangle_{n}$.

Let $i'\leq i$ be least such that
\[\frac{\mu(\pi_{s_0,\ldots,s_{r(i')}}([T'_{n}\cap \langle s_0,\ldots,s_{r(i')}\rangle_n]))}{\mu([\langle s_0,\ldots,s_{r(i')}\rangle_n])}<\epsilon/2^{i'}.\]
Such an $i'$ exists since $T'_{n}\cap\langle \vec s\rangle_n=\emptyset$.  If there is any $j\in(r(i'-1),r(i')]$ such that we have not yet output a value at $j$, we output $\Psi^T(j)=s_j$ for the least such $j$; otherwise we output nothing at this stage.  This completes the definition of the algorithm $\Psi^T$.

We now argue that if there is an $i$ so that $\Psi^T(j)$ is defined for every $j\in(r(i-1),r(i)]$, then $\mu([T'])<\epsilon$.  Suppose $\Psi^T(j)$ is defined for every $j\in(r(i-1),r(i)]$; then for each such $j$ we found a sequence $\vec s^j$ of length $r(i)+1$ so that $\frac{\mu(\pi_{\vec s^j}([T']\cap[\langle\vec s^j\rangle]))}{\mu([\langle\vec s^j\rangle])}<\epsilon/2^i$.  We define a coloring $c$ on distinct pairs $j,j'$ by setting $c(j,j')<i$ to be the largest $i'$ such that $s^j_0,\ldots,s^j_{r(i'-1)}=s^{j'}_0,\ldots,s^{j'}_{r(i'-1)}$, or $0$ if there is no such $i'$.  By the finite Ramsey's Theorem there is an $i'<i$ and a subset $H\subseteq (r(i-1),r(i)]$ with 
\[|H|> \frac{\log_2 \epsilon - i'}{\log_2[1-2^{i'-i}+\epsilon 2^{i'-2i}]}\]
 so that for all distinct pairs $j,j'\in H$, $c(j,j')=i'$.  ($r(i)$ should be chosen large enough to ensure this instance of the finite Ramsey's Theorem.)

Let $\vec s^*$ be the common initial segment of length $r(i'-1)+1$ (which is empty if $i'=0$).  For any $\vec s^j,\vec s^{j'}$ with $j,j'\in H$, $b^{\vec s^j}_{i''}=b^{\vec s^{j'}}_{i''}$ iff $i''\leq i'$.  Therefore the sets $[\langle \vec s^j\rangle]$ are all subsets of $[\langle \vec s^*\rangle]$ and independent as subsets.  Observe that $\frac{\mu([\langle \vec s^j\rangle])}{\mu([\langle \vec s^*\rangle])}=2^{i'-i}$.  If $\Lambda\in[T']\cap[\langle \vec s^*\rangle]$ then for each $j$ we have either $\Lambda\in [\langle \vec s^j\rangle]$ or $\Lambda\in [\langle \vec s^*\rangle]\setminus[\langle \vec s^j\rangle]$.  Observe that $\frac{\mu_{\vec s^j}([T']\cap[\langle \vec s^j\rangle])}{\mu([\langle \vec s^j\rangle])}<\epsilon 2^{-i}$, so $\frac{\mu_{\vec s^j}([T']\cap[\langle \vec s^j\rangle])}{\mu([\langle \vec s^*\rangle])}<\epsilon 2^{i'-2i}$.  On the other hand $\mu([\langle \vec s^*\rangle]\setminus[\langle \vec s^j\rangle])=1-2^{i'-i}$.

Since these sets are independent as subsets of $[\langle \vec s^*\rangle]$, it follows that $\mu_{\vec s^*}([T']\cap[\langle s^*\rangle])<(1-2^{i'-i}+\epsilon 2^{i'-2i})^{|H|}$.  We have chosen $H$ large enough that this is $<\epsilon 2^{-i'}$.  Consider the largest $j\in H$ and the stage at which $\Psi^T(j)$ was output.  At this stage, we picked the value $\vec s^j$, and we have seen that
\[\frac{\mu(\pi_{s^j_0,\ldots,s^j_{r(i'-1)}}([T'\cap \langle s^j_0,\ldots,s^j_{r(i'-1)}\rangle]))}{\mu([\langle s^j_0,\ldots,s^j_{r(i'-1)}\rangle])}<\epsilon/2^{i'}.\]
If $i'>0$ then we could only have output some $\Psi^T(j')$ with $j'\leq r(i'-1)$, and therefore we could not have output $\Psi^T(j)$.  This is a contradiction, so we cannot have $i'>0$.  But then $i'=0$, so $\mu([T'])<\epsilon$, and since $T\subseteq T'$, we would have $\mu([T])<\epsilon$.

So if $\mu([T])\geq\epsilon$, for each $i$ there is a $j\in(r(i-1),r(i)]$ so that $\Phi^T_e(j)\uparrow$.  Now, we show that if $f$ is a total function such that whenever $\Phi^T_e(n)\downarrow$, $f(n)\neq \Phi^T_e$, then $\Phi^f_m$ is a solution to $T$.  For suppose not; then there is some $i$ and some level $n$ of $T$ such that $b^f_0,\ldots, b^f_i$ is not a path through any $\sigma\in T_n$.  This means that some initial segment $f(0),\ldots,f(i)$ was killed, and therefore we took notice of it at some stage $i'\leq i$ and tried to output $\Phi^T_e(j)=f(j)$ for some $j\in (r(i'-1),r(i)]$; since this could not have occured, it must be that $\mu(T)<\epsilon$, contradicting the assumption that $\mu(T)\geq \epsilon$.
\end{proof}

\section{Further Questions}

There is still some room to refine the results here and in \cite{RKL-Variants}.  Patey \cite{Patey} shows that $\RKL$ (and even stronger theories like $\mathbf{RT}^2_2+\mathbf{FS}$) do not imply $d\mhyphen\DNR$ for any $d$.  The reverse implication still seems to be open, however.

\begin{question}$\phantom{.}$
For which $d$ does $d\mhyphen\DNR$ imply $\RKL$?
\end{question}

\bibliographystyle{plain}
\bibliography{LST}

\begin{thebibliography}{10}

\bibitem{MR2135656}
Klaus Ambos-Spies, Bj{\o}rn Kjos-Hanssen, Steffen Lempp, and Theodore~A.
  Slaman.
\newblock Comparing {DNR} and {WWKL}.
\newblock {\em J. Symbolic Logic}, 69(4):1089--1104, 2004.

\bibitem{RKL-Variants}
Laurent Bienvenu, Ludovic Patey, and Paul Shafer.
\newblock A {R}amsey-type {K}{\"o}nig's lemma and its variants.
\newblock in preparation.

\bibitem{RKL}
Stephen Flood.
\newblock Reverse mathematics and a {R}amsey-type {K}{\"o}nig's lemma.
\newblock {\em J. Symbolic Logic}, 77(4):1272--1280, 2012.

\bibitem{MR1034562}
Carl~G. Jockusch, Jr.
\newblock Degrees of functions with no fixed points.
\newblock In {\em Logic, methodology and philosophy of science, {VIII}
  ({M}oscow, 1987)}, volume 126 of {\em Stud. Logic Found. Math.}, pages
  191--201. North-Holland, Amsterdam, 1989.

\bibitem{MR2559129}
Masahiro Kumabe and Andrew E.~M. Lewis.
\newblock A fixed-point-free minimal degree.
\newblock {\em J. Lond. Math. Soc. (2)}, 80(3):785--797, 2009.

\bibitem{LST}
Manuel Lerman, Reed Solomon, and Henry Towsner.
\newblock Separating principles below {R}amsey's theorem for pairs.
\newblock {\em J. Math. Log.}, 13(2):1350007, 44, 2013.

\bibitem{MR2963024}
Jiayi Liu.
\newblock {${\bf{RT}}\sp 2\sb 2$} does not imply {${\bf{WKL}}\sb 0$}.
\newblock {\em J. Symbolic Logic}, 77(2):609--620, 2012.

\bibitem{Patey}
Ludovic Patey.
\newblock On combinatorial weaknesses of ramseyan principles.
\newblock draft, 2014.

\bibitem{simpson99}
Stephen~G. Simpson.
\newblock {\em Subsystems of second order arithmetic}.
\newblock Perspectives in Logic. Cambridge University Press, Cambridge, second
  edition, 2009.

\bibitem{MR1080236}
Xiaokang Yu and Stephen~G. Simpson.
\newblock Measure theory and weak {K}\"onig's lemma.
\newblock {\em Arch. Math. Logic}, 30(3):171--180, 1990.

\end{thebibliography}
\end{document}